\documentclass[11pt]{amsart}
\overfullrule = 0pt \topmargin=0in
\usepackage{amssymb,amsmath,amsthm,epsfig}
\theoremstyle{plain}

\topmargin=0in \oddsidemargin=0in \evensidemargin=0in
\textwidth=6.5in \textheight=8in \flushbottom


\newtheorem{thm}{Theorem}[section]
\newtheorem{lem}[thm]{Lemma}
\newtheorem{cor}[thm]{Corollary}
\newtheorem{prop}[thm]{Proposition}
\theoremstyle{definition}
\newtheorem{defin}[thm]{Definition}







\def\hangbox to #1 #2{\vskip1pt\hangindent #1\noindent \hbox to #1{#2}$\!\!$}


\allowdisplaybreaks

\title{Multivariate polynomial interpolation and sampling in Paley-Wiener
spaces}

\author{B. A. Bailey}
\address{Department of Mathematics, Texas A\&M University\\  
College Station, TX 77843, USA}
\email{abailey@math.tamu.edu}
\thanks{This research was supported in part by the NSF Grant DMS0856148}

\begin{document}

\begin{abstract}
In this paper, an equivalence between existence of particular exponential Riesz bases for multivariate bandlimited functions and existence of certain polynomial interpolants for these bandlimited functions is given.  For certain classes of unequally spaced data nodes and corresponding $\ell_2$ data, the existence of these polynomial interpolants allows for a simple recovery formula for multivariate bandlimited functions which demonstrates $L_2$ and uniform convergence on $\mathbb{R}^d$.  A simpler computational version of this recovery formula is also given, at the cost of replacing $L_2$ and uniform convergence on $\mathbb{R}^d$ with $L_2$ and uniform convergence on increasingly large subsets of $\mathbb{R}^d$. As a special case, the polynomial interpolants of given $\ell_2$ data converge in the same fashion to the multivariate bandlimited interpolant of that same data.  Concrete examples of pertinant Riesz bases and unequally spaced data nodes are also given.
\end{abstract}

\maketitle

\section{Introduction}\label{S:0}
\noindent  Approximation of bandlimited functions as limits of polynomials has a long history, as the following question illustrates: if $(\mathrm{sinc}\pi(\cdot -t_n))_{n \in \mathbb{Z}}$ forms a Riesz basis for $PW_{[-\pi,\pi]}$, what are the canonical product expansions of the biorthogonal functions for this Riesz basis? The first results along these lines were given by Paley and Wiener in \cite{PW}, and improved upon by Levinson in \cite[pages 47-67]{L}), while Levin extends these results to different classes of Riesz bases in \cite{Lev}.  A complete solution is given by Lyubarskii and Seip in \cite{LS} and Pavlov in \cite{P}.  In particular, they prove the following theorem:
\begin{thm}\label{lyuseippav}
Let $(t_n)_n \subset \mathbb{R}$, where $t_n \neq 0$ when $n \neq 0$, be a sequence such that the family of functions $(\mathrm{sinc} \pi(\cdot -t_n))_n$ is a Riesz basis for $PW_{[-\pi,\pi]}$, then the function
\begin{equation}
S(z) = \lim_{r \rightarrow \infty} (z-t_0) \prod_{\{ t_n \ : \ |t_n|<r, n \neq 0 \}} \Big(1-\frac{z}{t_n}  \Big)\nonumber
\end{equation}
is entire, where convergence is uniform on compacta,  and the biorthogonal functions $(G_n)_n$ of $$(\mathrm{sinc} \pi((\cdot) -t_n))_n$$ are given by
\begin{equation}
G_n(z) = \frac{S(z)}{(z-t_n)S'(t_n)}.\nonumber
\end{equation}
\end{thm}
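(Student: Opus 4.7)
The plan is to construct $S$ from the biorthogonal system of the Riesz basis and then recognize the symmetric product as the Hadamard factorization of that construction. The Riesz basis hypothesis yields a unique biorthogonal family $(G_n)_n \subset PW_{[-\pi,\pi]}$ with $G_n(t_m) = \delta_{nm}$: each $G_n$ is entire of exponential type at most $\pi$, lies in $L^2(\R)$, and vanishes on $\{t_m : m \neq n\}$. I would set $\widetilde S(z) := (z-t_0)G_0(z)$, which is entire, of exponential type $\pi$, and has $(t_n)_n$ as its zero set. Classical Plancherel--Polya type estimates applied to the Riesz basis condition force $(t_n)$ to be uniformly separated with linear (Beurling) density $1$ and counting function $n(r) = 2r + O(1)$; in particular $\sum_{n \neq 0}|t_n|^{-2} < \infty$, so $\widetilde S$ has order and genus at most $1$.

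Next, Hadamard's factorization theorem provides constants $c$ and $\alpha$ with
$$\widetilde S(z) = c\,(z-t_0)\,e^{\alpha z}\prod_{n \neq 0}\Big(1-\tfrac{z}{t_n}\Big)e^{z/t_n}.$$
The content of the theorem is that the Weierstrass convergence factors can be dropped in the symmetric limit — equivalently, $\alpha + \sum_{0<|t_n|<r} 1/t_n \to 0$ as $r \to \infty$. To establish this, one uses that $\widetilde S$ is of \emph{sine type}, that is, $c_1 \leq |\widetilde S(x \pm i y_0)| \leq c_2$ holds on $\R$ for some $y_0 > 0$; this is precisely the Pavlov / Lyubarskii--Seip characterization of Riesz-basis-generating sequences. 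For sine-type functions the indicator function equals $\pi|\sin\theta|$, and an Abel-summation argument applied to the logarithmic derivative of $\widetilde S$ along the imaginary axis, together with the symmetry $n(r) = 2r + O(1)$, pins down $\alpha$ as the negative of the principal-value sum $\lim_r \sum_{0<|t_n|<r} 1/t_n$. The symmetric product in the theorem's statement then converges uniformly on compacta to $\widetilde S/c$, and we take this (renormalized) limit as $S$.

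For the biorthogonal formula, observe that $S(z)/[(z-t_n)S'(t_n)]$ is entire (the pole at $t_n$ cancels the zero of $S$), of exponential type $\pi$, equals $1$ at $z=t_n$, and vanishes on $(t_m)_{m \neq n}$; the sine-type bounds on $S$ together with a Phragm\'en--Lindel\"of argument show that $S(z)/(z-t_n)$ belongs to $PW_{[-\pi,\pi]}$. By uniqueness of biorthogonals this function must be $G_n$. The main obstacle is the cancellation identity $\alpha + \lim_r \sum_{0<|t_n|<r} 1/t_n = 0$: it requires the full force of the Riesz basis hypothesis through the sine-type estimate on $\widetilde S$, and is really the heart of the Pavlov / Lyubarskii--Seip theorem. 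Care must also be taken when $t_0 = 0$, which is handled by keeping the factor $(z-t_0)$ separate from the symmetric product at every step.
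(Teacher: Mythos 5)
The paper does not prove Theorem~\ref{lyuseippav}; it cites the result to Lyubarskii--Seip \cite{LS} and Pavlov \cite{P} and uses it as background, so there is no internal proof against which to compare. Evaluated on its own terms, your sketch captures the right overall architecture --- build the generating function from the biorthogonal $G_0$, apply Hadamard factorization, and argue that the Weierstrass convergence factors disappear in the symmetric limit --- but it contains a genuine error at the step that you yourself identify as ``the heart of the theorem.''

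You assert that the Riesz basis hypothesis forces $\widetilde S$ to be of sine type (bounded above and below on a horizontal line) and parenthetically credit this to Pavlov and Lyubarskii--Seip. That is not what those theorems say, and the claim is false. Sine type is a \emph{sufficient} condition for the exponentials to form a Riesz basis (that is Levin's theorem), but it is not \emph{necessary}. Pavlov's characterization is strictly weaker: the Riesz basis property is equivalent to $|S(x+iy_0)|^2$ satisfying the Muckenhoupt $A_2$ condition on $\mathbb R$, and an $A_2$ weight need not be bounded above or below. Sequences $t_n$ with perturbations $t_n - n$ that are unbounded but average out (e.g.\ in Avdonin's ``$1/4$ in the mean'' regime) give Riesz bases whose generating functions are not sine type. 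Since your Abel-summation argument pinning down the Hadamard constant $\alpha$ explicitly invokes the two-sided sine-type bound, the argument as written does not go through under the actual hypothesis. Whatever replaces it would need to work with the $A_2$ condition alone, and that is precisely the substance of \cite{P,LS} that cannot be waved through by citing a stronger, unavailable estimate. A secondary omission: you need to know that the zero set of $G_0$ is \emph{exactly} $(t_n)_{n\neq 0}$ with simple zeros, not merely that $G_0(t_n)=0$; this requires a separate completeness argument (compare the paper's Lemma~\ref{rbreplace}), and is assumed silently in your construction of $\widetilde S$.
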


\noindent The following is a readily proven corollary of Theorem \ref{lyuseippav}:
\begin{cor}\label{lyuseippavcor}
Let $(t_n)_n\subset \mathbb{R}$ and $(G_k)_k$ be defined as in Theorem \ref{lyuseippav}, then for each $k$, there exists a sequence of polynomials $(\Phi_{N,k})_N$ such that\\

\noindent 1) $\Phi_{N,k} (t_n) = G_k (t_n)$ when $|t_n|<N$.\\

\noindent 2) $\lim_{N \rightarrow \infty} \Phi_{N,k} = G_k$ uniformly on compacta.

\end{cor}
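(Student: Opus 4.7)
The plan is to construct $\Phi_{N,k}$ as a renormalized truncation of the infinite product defining $S$. For each $N > 0$, set
\[
S_N(z) \;=\; (z - t_0) \prod_{\{t_n \,:\, |t_n| < N,\, n \neq 0\}} \Big(1 - \frac{z}{t_n}\Big),
\]
which is a polynomial. For $N > |t_k|$, the factor corresponding to $t_k$ appears explicitly in $S_N$, so $(z - t_k)$ divides $S_N(z)$, and $t_k$ is a simple root of $S_N$ (the nodes $(t_n)_n$ are distinct, since $(\mathrm{sinc}\,\pi(\cdot - t_n))_n$ is a Riesz basis). Hence $S_N'(t_k) \neq 0$, and I define the polynomial
\[
\Phi_{N,k}(z) \;=\; \frac{S_N(z)}{(z - t_k)\, S_N'(t_k)}.
\]

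To verify property (1), I observe that $\Phi_{N,k}(t_n) = 0$ whenever $n \neq k$ and $|t_n| < N$, since the relevant factor in $S_N$ vanishes at $t_n$; also $\Phi_{N,k}(t_k) = S_N'(t_k)/S_N'(t_k) = 1$ by l'H\^{o}pital's rule. These values match $G_k(t_n) = \delta_{kn}$, which follows from the biorthogonality identity $\langle G_k, \mathrm{sinc}\,\pi(\cdot - t_n) \rangle = \delta_{kn}$ together with the reproducing property $f(t_n) = \langle f, \mathrm{sinc}\,\pi(\cdot - t_n) \rangle$ valid for every $f \in PW_{[-\pi,\pi]}$.

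For property (2), Theorem \ref{lyuseippav} gives $S_N \to S$ uniformly on compacta, and hence $S_N'(t_k) \to S'(t_k) \neq 0$ by the Weierstrass theorem on uniform limits of holomorphic functions. For $N > |t_k|$, the difference $S_N - S$ is entire and vanishes at $t_k$, so the quotient $(S_N(z) - S(z))/(z - t_k)$ extends to an entire function; a Cauchy estimate on a slightly enlarged disk shows this quotient also tends to zero uniformly on compacta. Dividing by $S_N'(t_k)$, which is eventually bounded away from zero, yields $\Phi_{N,k} \to G_k$ uniformly on compacta.

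The main obstacle, though mild, is the apparent singularity at $z = t_k$: naive division of a sequence $S_N$ converging only on compacta by the factor $(z-t_k)$ is not automatically well-behaved at $t_k$. The remedy is to exploit the fact that both $S_N$ and $S$ vanish at $t_k$, so the removable-singularity extension of their difference divided by $(z-t_k)$ can be controlled via Cauchy's integral formula on a small circle around $t_k$, converting uniform smallness of $S_N - S$ into uniform smallness of $(S_N-S)/(z-t_k)$ on any compact set.
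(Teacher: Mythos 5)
The paper offers no proof of Corollary \ref{lyuseippavcor}, stating only that it is ``readily proven'' from Theorem \ref{lyuseippav}. Your argument---truncating the canonical product to $S_N$, renormalizing by $(z - t_k)S_N'(t_k)$ so that $\Phi_{N,k}$ is a polynomial matching $G_k(t_n) = \delta_{kn}$ at the nodes inside $(-N,N)$, and then passing to the limit via the Weierstrass theorem (to get $S_N'(t_k) \to S'(t_k) \neq 0$) together with a Cauchy estimate on an enlarged disk to control the division of $S_N - S$ by $(z - t_k)$---is correct and is precisely the natural argument the paper leaves implicit.
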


\noindent Corollary \ref{lyuseippavcor} raises two questions:\\

\noindent 1) Does every multivariate bandlimited function, (not just biorthogonal functions associated with a particular exponential Riesz basis), have a corresponding sequence of polynomial interpolants?\\

\noindent 2) If such polynomial interpolants for a multivariate bandlimited function exist, can these interpolants be used to be approximate the function in some simple and straightforward way?\\

\noindent  Let $(t_n)_{n \in \mathbb{Z}^d} \subset \mathbb{R}^d$ be a sequence such that the family of exponentials $\big( e^{i \langle \cdot , t_n \rangle} \big)_{n \in \mathbb{Z}^d}$ is a uniformly invertible Riesz basis for $L_2([-\pi,\pi]^d)$ (defined in section 4).  Under this condition, Theorem \ref{maintheorem} answers the first question affirmatively by showing that multivariate bandlimited functions can be approximated globally, both uniformly and in $L_2$, by a rational function times a multivariate sinc function.  Stated informally,
\begin{equation}\label{firstblip}
f(t) \simeq  \Psi_\ell(t) \frac{\mathrm{SINC} (\pi t)}{Q_{d,\ell} (t)},\quad \ell > 0,
\end{equation}
where $(\Psi_\ell))_{\ell \in \mathbb{N}}$ is a particular sequence of interpolating polynomials and $(Q_{d,\ell})_{d,\ell}$ is a sequence of polynomials which removes the zeros of the SINC function.  This gives a partial answer to the second question, but the fraction in the approximants above becomes more complex as $\ell$ increases.  Theorem \ref{simplethm} gives a more satisfactory answer to question 2) by using $$e^{-\sum_{k=1}^{N} \frac{1}{k(2k-1)}\frac{\| t \|_{2k}^{2k}}{(\ell+1/2)^{2k-1}}}, \quad \ell>0$$ in lieu of the SINC function in expression (\ref{firstblip}).  The exponent in the above expression is simply a rational function of $\ell$.  This simplicity necessitates replacing global $L_2$ and uniform convergence with a more local (though not totally local) convergence.  Corollary \ref{simplecormod} is of particular interest as a multivariate analogue of Theorem \ref{lyuseippav}, stately informally as
\begin{equation}
f(t) \simeq  \Psi_\ell(t),\quad \ell > 0.\nonumber
\end{equation}
The author is unaware of any other multivariate polynomial approximation theorem which applies to exponential Riesz bases which are not necessarily tensor products of single-variable Riesz bases, or that demonstrate convergence stronger than uniform convergence on compacta.  As a note, Theorems \ref{maintheorem}, \ref{simplethm}, and Corollary \ref{simplecormod} do not, at this point, recover Corollary \ref{lyuseippavcor} in its generality of allowable sequences $(t_n)_n\subset \mathbb{R}$; however, the comments above show that their value is primarily due to their multidimensional nature and convergence properties.\\

\noindent This paper is outlined as follows.  Section 2 covers the necessary preliminary and background material regarding bandlimited functions, and section 3 outlines some basic properties of uniformly invertible operators.  Theorems \ref{maintheorem} and Theorem \ref{simplethm} are proven in sections 4 and 5 respectively, along with pertinant corollaries.  Section 6 gives explicit examples of sequences $(t_n)_{n \in \mathbb{Z}^d}$ to which Theorems \ref{maintheorem} and \ref{simplethm} apply.  Section 7 (as an appendix) addresses the optimality of growth rates appearing in Theorem \ref{simplethm}.

\section{Preliminaries}\label{S:1}

\begin{defin}
A reproducing kernel Hilbert space is a Hilbert space $H$ of functions on $X$ such that there exists $K:X\times X\rightarrow \mathbb{C}$ satisfying the following:\\
\noindent 1) For all $y \in X$, $K(\cdot,y) \in H$.\\
\noindent 2) $f(x) = \langle f(\cdot) , K(\cdot,x) \rangle$ for all $x \in X$ and $f\in H.$
\end{defin}

\begin{defin}
A Riesz basis for a Hilbert space $H$ is a sequence $(f_n)_{n \in \mathbb{N}}$ which is isomorphically equivalent to an orthonormal basis of $H$.  Equivalently, a Riesz basis is an unconditional Schauder basis.\\
\end{defin}

\noindent If $(f_n)_{n \in \mathbb{N}}$ is a Schauder (Riesz) basis for a Hilbert space $H$, then there exists a unique set of functions $(f_n^*)_{n \in \mathbb{N}}$, (the biorthogonals of $(f_n)_{n \in \mathbb{N}}$) such that $\langle f_n , f_m^* \rangle  = \delta_{nm}.$ The biorthogonals also form a Schauder (Riesz) basis for $H$. Note that biorthogonality is preserved under a unitary transformation.\\

\noindent We use the $d$-dimensional $L_2$ isometric Fourier transform
\begin{equation}
\mathcal{F}(f)(\cdotp) = \mathrm{P.V. \ }\frac{1}{(2\pi)^{d/2}} \int_{\mathbb{R}^d} f(\xi)e^{-i \langle \cdotp , \xi \rangle} d\xi, \quad f  \in L_2(\mathbb{R}^d),\nonumber
\end{equation}
where the inverse transform is given by 
\begin{equation}
\mathcal{F}^{-1}(f)(\cdotp) = \mathrm{P.V. \ } \frac{1}{(2\pi)^{d/2}}\int_{\mathbb{R}^d} f(\xi)e^{i \langle \cdotp , \xi \rangle} d\xi , \quad f  \in L_2(\mathbb{R}^d).\nonumber
\end{equation}

\begin{defin}
We define $PW_{[-\pi,\pi]^d} := \{f \in L_2(\mathbb{R}^d) \arrowvert \mathrm{supp}(\mathcal{F}^{-1}(f)) \subset [-\pi,\pi]^d \}$, with the inherited $L_2 (\mathbb{R}^d)$ norm.  Functions in $PW_{[-\pi,\pi]^d}$ are also called bandlimited functions.
\end{defin}

\noindent Here are facts concerning $PW_{[-\pi,\pi]^d}$ which will be used ubiquitously.\\

\noindent 1) $PW_{[-\pi,\pi]^d}$ is isometric to $L_2([-\pi,\pi]^d)$ by way of the Fourier transform.\\

\noindent 2) $PW_{[-\pi,\pi]^d}$ consists of entire functions, though in this paper we restrict the domain to $\mathbb{R}^d$.\\

\noindent 3) $PW_{[-\pi,\pi]^d}$ is a reproducing kernel Hilbert space with reproducing kernel $$K(x,y) = \mathrm{SINC}\pi(x-y)$$ where $$\mathrm{SINC}(x) := \mathrm{sinc}(x(1))\cdot\ldots\cdot\mathrm{sinc}(x(d)),\quad \mathrm{sinc}(x) := \frac{\sin(x)}{x}.$$

\noindent 4) $\big(\mathrm{SINC}\pi((\cdot)-n)\big)_{n \in \mathbb{Z}^d}$ is an orthonormal basis for $PW_{[-\pi,\pi]^d}$.  This follows from $$\mathcal{F}\big( \frac{1}{\sqrt{2\pi}}e^{i \tau (\cdot)} \big)(t) = \mathrm{sinc}\pi(t-\tau).$$

\noindent 5) \noindent In $PW_{[-\pi,\pi]^d}$, $L_2$ convergence implies uniform convergence.\\

\noindent 6) If $f\in PW_{[-\pi,\pi]^d}$, then $$\lim_{\|x\|_\infty \rightarrow \infty} f(x)=0.$$  This follows from the $d$-dimensional Riemann-Lebesgue Lemma.\\

\noindent 7)  The following result \cite[Theorem 19.3]{W} due to Paley and Wiener characterizes single-variable bandlimited functions.
\begin{thm}\label{PaleyWiener}
A function $f$ is in $PW_{[-\pi,\pi]}$ if and only if the following statements hold.\\
1) $f$ is entire.\\
2) There exists $M \geq 0$ such that $|f(z)| \leq M e^{\pi |z|}$ for $z \in \mathbb{C}$.\\
3) $f\big|_\mathbb{R} \in L_2(\mathbb{R})$.
\end{thm}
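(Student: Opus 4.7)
The necessity is the easier direction. Given $f\in PW_{[-\pi,\pi]}$, set $g:=\mathcal{F}^{-1}(f)\in L_2(\R)$, which by definition has support in $[-\pi,\pi]$ and hence lies in $L_1([-\pi,\pi])$. The formula
\begin{equation*}
f(z)=\frac{1}{\sqrt{2\pi}}\int_{-\pi}^{\pi} g(\xi)\,e^{-iz\xi}\,d\xi
\end{equation*}
extends $f$ to an entire function via differentiation under the integral (the integrand is jointly continuous in $(z,\xi)$ and holomorphic in $z$ uniformly on compacta), and the pointwise estimate $|f(z)|\leq \tfrac{1}{\sqrt{2\pi}}\|g\|_{L_1([-\pi,\pi])}\,e^{\pi|\mathrm{Im}(z)|}\leq \|g\|_{L_2}\,e^{\pi|z|}$ gives property (2); property (3) is built into the definition via Plancherel.

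For sufficiency, suppose $f$ satisfies (1)--(3) and set $g:=\mathcal{F}^{-1}(f)\in L_2(\R)$; the goal is to prove $\supp(g)\subset[-\pi,\pi]$. I would carry out a contour-shift argument: assuming the horizontal-strip estimate $\|f(\cdot+iy)\|_{L_2(dx)}\leq C\,e^{\pi|y|}$ for every $y\in\R$, Cauchy's theorem applied to truncations of the $L_2$ inversion integral (together with a passage to the limit in $L_2$) yields the key identity
\begin{equation*}
\mathcal{F}^{-1}\bigl(f(\cdot+iy)\bigr)(\xi)=e^{y\xi}g(\xi)\quad\text{for a.e.\ }\xi\in\R.
\end{equation*}
Plancherel then gives $\int_\R|g(\xi)|^2 e^{2y\xi}\,d\xi\leq C^2 e^{2\pi|y|}$ for every real $y$. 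For $y>0$ this implies $\int_\pi^\infty|g(\xi)|^2 e^{2y(\xi-\pi)}\,d\xi\leq C^2$, and sending $y\to+\infty$ together with monotone convergence forces $g=0$ a.e.\ on $(\pi,\infty)$. Symmetrically, $y\to-\infty$ kills $g$ on $(-\infty,-\pi)$, which is exactly the desired support condition.

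The main obstacle is the strip estimate $\|f(\cdot+iy)\|_{L_2(dx)}\leq Ce^{\pi|y|}$, because the hypothesis $|f(z)|\leq Me^{\pi|z|}$ gives only $|f(x+iy)|\leq Me^{\pi(|x|+|y|)}$, which blows up along horizontal lines in the $x$-direction. This is essentially the Plancherel--Polya theorem, and the standard route is Phragm\'en--Lindel\"of applied in the upper and lower half-planes to auxiliary functions such as $e^{\mp i\pi z}f(z)$: a reproducing-kernel (mean-value) argument, combined with $f|_\R\in L_2$ and the exponential-type hypothesis, first upgrades $f|_\R$ to a bounded function; Phragm\'en--Lindel\"of then transfers the control from the real axis into each half-plane, and an analogous duality/interpolation argument promotes the pointwise bound to the required $L_2$ bound on horizontal lines. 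With this estimate secured, the contour shift and the monotone-convergence argument above complete the proof.
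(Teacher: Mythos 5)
The paper does not actually prove this theorem; it is quoted as a known result, cited as Theorem~19.3 of Rudin's \emph{Real and Complex Analysis} (reference \cite{W}). So there is no in-paper proof to compare against, only the external source.

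Your plan is a correct proof strategy, and the forward direction is exactly right. For the converse, the route you sketch is the classical Plancherel--P\'olya argument: establish the horizontal-strip bound $\|f(\cdot+iy)\|_{L_2(dx)}\le Ce^{\pi|y|}$, contour-shift to obtain $\mathcal{F}^{-1}(f(\cdot+iy))=e^{y\xi}g(\xi)$, and then let $y\to\pm\infty$ to kill $g$ off $[-\pi,\pi]$. This is sound, and the final monotone-convergence step is airtight. It is, however, not the route the cited source takes. Rudin avoids the $L_2$-on-strips lemma entirely: he sets $f_\epsilon(x)=f(x)e^{-\epsilon|x|}$, writes the Fourier transform of $f_\epsilon$ as a sum of two half-line integrals, and uses Cauchy's theorem to rotate each half-line to a family of rays $\Gamma_\alpha(s)=se^{i\alpha}$ through the origin, obtaining via a Phragm\'en--Lindel\"of argument that $\widehat{f_\epsilon}(t)\to 0$ for $|t|>\pi$; $L_2$-convergence of $\widehat{f_\epsilon}$ to $\widehat{f}$ then finishes. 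Your version front-loads the analytic difficulty into a single quantitative lemma (Plancherel--P\'olya), which is cleaner conceptually but is essentially equivalent in depth to the Phragm\'en--Lindel\"of estimate Rudin uses on the rays. Two points to tighten if you write it out: (i) the contour shift from $\mathbb{R}$ to $\mathbb{R}+iy$ produces contributions from the vertical segments at $\pm R$, and you should say explicitly how the strip bound lets you choose $R_k\to\infty$ making these vanish; (ii) the step ``upgrades $f|_{\mathbb{R}}$ to a bounded function'' is not automatic from the mean-value inequality plus exponential type alone, since the pointwise bound $Me^{\pi|z|}$ blows up horizontally; you need the $L_2$ hypothesis in a genuinely two-dimensional way (e.g.\ a subharmonicity/local-mean argument combined with the eventual strip bound, or the standard Plancherel--P\'olya proof via $H^p$ theory). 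As a plan it is correct, but those two lemmas carry essentially all the weight.
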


\section{Uniform invertibility of operators and Riesz bases}\label{S:2}

\begin{defin} Let $A:\ell_2(\mathbb{N}) \rightarrow \ell_2(\mathbb{N})$ be an onto isomorphism. Regard $A$ as a matrix map with respect to the unit vector basis of $\ell_2(\mathbb{N})$.  Let $\pi_k$ be the orthogonal projection onto the span of the first $k$ terms of the unit vector basis.  If 
\begin{equation}\label{matunif}
\sup_{j \in \mathbb{N}} \| (\pi_{k_j} A \pi_{k_j})^{-1}\|< \infty,
\end{equation}
for an increasing sequence $(k_j)_{j \in \mathbb{N}},$ then $A$ is said to be uniformly invertible as a matrix map with respect to the projections $(\pi_{k_j})_{j \in \mathbb{N}}$.  The terms in inequality (\ref{matunif}) should be interpreted as standard matrix norms and inverses of finite dimensional matrices.
\end{defin}

\noindent Let $S$ be an orthonormal basis for a Hilbert space $H$. Let $(S_n)_{n \in \mathbb{N}}$ be a sequence of sets such that\\

\noindent 1) $\emptyset \neq S_1 \subsetneq S_2 \subsetneq \cdots \subset S$, and\\
\noindent 2) $\bigcup_{n=1}^{\infty} S_n = S$.\\

\noindent Define $P_\ell$ to be the orthogonal projection onto $\mathrm{span}\{e_k\}_{e_k \in S_\ell}$.  Note that
\begin{equation}
\lim_{\ell \rightarrow \infty} P_\ell x = x, \quad x \in H.
\end{equation}
Linearly order $S = (e_n)_{n \in \mathbb{N}}$ such that, if $e_n \in S_k \setminus S_{k-1}$, and $e_m \in S_{k-1}$, then $m < n$.\\

\begin{defin}
Let $(v_k)_{k \in \mathbb{N}}$ be the unit vector basis for $\ell_2(\mathbb{N})$ and define $\phi$ by $\phi e_k = v_k$.  Let $L:H \rightarrow H$ be an onto isomorphism. $L$ is said to be uniformly invertible with respect to the projections $(P_\ell)_{\ell \in \mathbb{N}}$ if $\phi L \phi^{-1}$ is uniformly invertible as a matrix map with respect to the projections $(\pi_{|S_\ell|})$.
\end{defin}

\noindent We define the following notation:
\begin{equation}\label{abusivenot}
(P_\ell L P_\ell)^{-1} := (\pi_{|s_\ell|} (\phi L \phi^{-1}) \pi_{|s_\ell|})^{-1}.
\end{equation}
By saying $P_\ell L P_\ell$ is invertible, we mean that the right hand side of equation (\ref{abusivenot}) is well defined.  If $L$ is defined on $\mathrm{span}(e_n)_{n \in \mathbb{N}}$, but perhaps not on $H$, we define ``$P_\ell L P_\ell$ is invertible'' in the same way.\\

\noindent If $L$ is an operator on $H$ (perhaps densely defined), and $(P_\ell)_{\ell \in \mathbb{N}}$ is a sequence of projections defined above, define the operator $L_\ell = L P_\ell + I-P_\ell.$\\

\begin{defin}
Let $(v_k)_{k \in \mathbb{N}}$ be a Riesz basis for $H$.  We define $(v_k)_{k \in \mathbb{N}}$ to be uniformly invertible with respect to the projections $(P_\ell)_{\ell \in \mathbb{N}}$ if the corresponding isomorphism $L e_k = v_k$ is uniformly invertible with respect to the projections $(P_\ell)_{\ell \in \mathbb{N}}$.
\end{defin}

\noindent We can now state and prove the following lemmas:

\begin{lem}\label{firstone}
Let $(e_n)_{n \in \mathbb{N}}$ be an orthonormal basis for $H$,  let $(f_n)_{n \in \mathbb{N}} \subset H$, and let $P_\ell$ be the orthogonal projection onto $\mathrm{span}(e_n)_{n \leq \ell}$.  Define $L: \mathrm{span}\{e_n\}_{n \in \mathbb{N}} \rightarrow H$ by $Le_n= f_n$.  For each $\ell>0$, the following statements are equivalent:\\

\noindent 1) $(f_n)_{n \leq \ell} \cup (e_n)_{n > \ell}$ is a Riesz basis for $H$.\\
\noindent 2) $P_\ell L P_\ell$ is invertible.
\end{lem}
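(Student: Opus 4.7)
The plan is to recognize both statements as assertions about a single operator and then reduce the problem to a block-triangular argument. Define $T:H\to H$ by
\[
T = LP_\ell + (I-P_\ell),
\]
so that $Te_n = f_n$ for $n\le \ell$ and $Te_n=e_n$ for $n>\ell$. Because $LP_\ell$ has range in the finite-dimensional subspace $\mathrm{span}(f_n)_{n\le \ell}$, it is automatically bounded, so $T$ extends to a bounded operator on all of $H$. Since $(e_n)$ is an orthonormal basis, the sequence $(Te_n)_n$ is a Riesz basis for $H$ if and only if $T$ is an invertible isomorphism of $H$; this replaces statement 1 by an invertibility question for $T$.

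Next I would decompose $H = H_\ell \oplus H_\ell^\perp$, where $H_\ell:=P_\ell H = \mathrm{span}(e_n)_{n\le \ell}$, and write $T$ as a $2\times 2$ block matrix with respect to this splitting. The key computation is that for $n>\ell$ one has $Te_n=e_n \in H_\ell^\perp$, so the off-diagonal block from $H_\ell^\perp$ to $H_\ell$ vanishes and the diagonal block on $H_\ell^\perp$ is the identity. Thus
\[
T = \begin{pmatrix} P_\ell L P_\ell & 0 \\ (I-P_\ell)LP_\ell & I_{H_\ell^\perp} \end{pmatrix},
\]
where the top-left block is exactly the finite matrix whose invertibility is the content of statement~2 (via the identification $\phi$ and the abusive notation in~(\ref{abusivenot})).

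The argument then becomes the standard fact that a bounded block-lower-triangular operator with identity on one diagonal block is invertible precisely when the other diagonal block is. For the forward direction, if $P_\ell L P_\ell$ is invertible on the finite-dimensional space $H_\ell$, I would write down the explicit inverse
\[
T^{-1} = \begin{pmatrix} (P_\ell L P_\ell)^{-1} & 0 \\ -(I-P_\ell)LP_\ell (P_\ell L P_\ell)^{-1} & I_{H_\ell^\perp} \end{pmatrix},
\]
which is bounded since its only nontrivial blocks act on or into a finite-dimensional space. For the converse, assuming $T$ is an isomorphism, I would show $P_\ell L P_\ell$ is injective on $H_\ell$ (hence invertible, as $H_\ell$ is finite-dimensional): if $x\in H_\ell$ with $P_\ell L P_\ell x = 0$, then $Tx = (I-P_\ell)LP_\ell x \in H_\ell^\perp$, and writing this as $Ty$ for some $y\in H_\ell^\perp$ (using $T|_{H_\ell^\perp}=I$) yields $T(x-y)=0$, forcing $x=y=0$.

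The main obstacle is essentially bookkeeping: reconciling the abusive matrix-style notation in~(\ref{abusivenot}) with the operator-theoretic statement, and making sure the invertibility claim is interpreted correctly on the finite-dimensional space $H_\ell$ rather than asking for invertibility of the compression as a Hilbert-space operator on all of $H$. Once the block decomposition is in place, the rest is a direct verification.
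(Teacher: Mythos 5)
Your proposal is correct and gives the result by a cleaner route than the paper does. The paper proves $2)\Rightarrow 1)$ by first verifying that $L_\ell$ is injective, then explicitly producing a preimage $y = P_\ell A_\ell P_\ell x + P_\ell x - L P_\ell A_\ell P_\ell x$ of $P_\ell x$ to establish surjectivity, and finally invoking the open mapping theorem; and it proves $1)\Rightarrow 2)$ via the algebraic identity $P_\ell = (P_\ell L P_\ell)(P_\ell L_\ell^{-1} P_\ell)$. You instead organize $L_\ell$ as a block-lower-triangular operator on $H_\ell \oplus H_\ell^\perp$ with diagonal blocks $P_\ell L P_\ell$ and $I_{H_\ell^\perp}$, and read both implications directly off the block structure: the forward direction by writing down the explicit inverse $\begin{pmatrix} (P_\ell L P_\ell)^{-1} & 0 \\ -(I-P_\ell)LP_\ell(P_\ell L P_\ell)^{-1} & I \end{pmatrix}$ (so no open mapping theorem is needed), and the converse by the injectivity-plus-finite-dimensionality argument. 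Your block matrix and its inverse are both correct, and your injectivity argument for the converse is sound (from $Tx = (I-P_\ell)LP_\ell x \in H_\ell^\perp$ and $T|_{H_\ell^\perp} = I$ you get $T(x - Tx) = 0$, so injectivity of $T$ forces $x = Tx$, and since $x \in H_\ell$ while $Tx \in H_\ell^\perp$ both are zero). What the block-triangular packaging buys you is that invertibility and boundedness of the inverse are immediate from the form of the matrix, avoiding the paper's ad hoc verification of surjectivity and its appeal to the open mapping theorem; what the paper's more computational approach buys is the explicit identity $(P_\ell L P_\ell)^{-1} = P_\ell L_\ell^{-1} P_\ell$, which it reuses directly in the proof of Lemma \ref{thirdone}.
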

\begin{proof}[Proof of Lemma \ref{firstone}]
\noindent 1) $\Longrightarrow$ 2): From the definition of $L_\ell$ we know that it is an onto isomorphism. This yields $P_\ell = P_\ell L P_\ell L_\ell^{-1}$, implying $P_\ell = (P_\ell L P_\ell)(P_\ell L_\ell^{-1} P_\ell)$.\\ 

\noindent 2) $\Longrightarrow$ 1):  Let $A_\ell$ be the unique square matrix such that $P_\ell L P_\ell A_\ell = A_\ell P_\ell L P_\ell = P_\ell$. We need to show that $L_\ell$ is an onto isomorphism.\\

\noindent First we show that $L_\ell$ is one to one.  Say $ 0 = L_\ell x = L P_\ell x +(I-P_\ell)x$, then $0= P_\ell L P_\ell x$, so that $0 = A_\ell P_\ell L P_\ell x = P_\ell x$.  We conclude that $x = (I- P_\ell)x$. This implies $0 = L_\ell (I-P_\ell)x = (I- P_\ell)x =x $.\\

\noindent Next we show that $L_\ell$ is onto. Note $L_\ell (I-P_\ell)x = (I- P_\ell)x$, so we only need to show that for all $x$, $P_\ell x$ is in the range of $L_\ell$.  Define $$y = P_\ell A_\ell P_\ell x +P_\ell x-L P_\ell A_\ell P_\ell x.$$  We have $L_\ell ( P_\ell A_\ell P_\ell x) = L P_\ell A_\ell P_\ell x$ and
\begin{eqnarray}
L_\ell (P_\ell x-L P_\ell A_\ell P_\ell x) & = & L P_\ell x-L(P_\ell L P_\ell)(A_\ell P_\ell)x+P_\ell x - L P_\ell A_\ell P_\ell x\\
& = & P_\ell x - L P_\ell A_\ell P_\ell x,\nonumber
\end{eqnarray}
from which $L_\ell y = P_\ell x$.\\

\noindent $L_\ell$ is a continuous bijection between Hilbert spaces, and hence is an onto isomorphism by the open mapping theorem.
\end{proof}

\begin{lem}\label{secondone}
Define $L$ as in Lemma \ref{firstone}. For all $\ell>0$, $L_\ell$ is an onto isomorphism iff it is one to one.
\end{lem}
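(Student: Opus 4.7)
The forward direction is immediate since any onto isomorphism is injective, so the content lies entirely in showing that if $L_\ell$ is one-to-one then $L_\ell$ is an onto isomorphism. My plan is to route this through Lemma \ref{firstone}: since $L_\ell e_n = f_n$ for $n \leq \ell$ and $L_\ell e_n = e_n$ for $n > \ell$, $L_\ell$ is an onto isomorphism if and only if the sequence $(f_n)_{n \leq \ell} \cup (e_n)_{n > \ell}$ is a Riesz basis for $H$, which by Lemma \ref{firstone} is equivalent to invertibility of the finite-dimensional matrix $P_\ell L P_\ell$ on $P_\ell H$. Since invertibility of a square matrix is the same as injectivity, it suffices to prove the contrapositive: if $P_\ell L P_\ell$ has a nontrivial kernel on $P_\ell H$, then $L_\ell$ itself fails to be one-to-one.

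So suppose $x \in P_\ell H$ is nonzero with $P_\ell L x = 0$. The crucial observation is that $Lx$ then lies entirely in $(I-P_\ell)H$. I would set $y := x - Lx$ and verify $L_\ell y = 0$ by direct computation: $P_\ell y = x$ and $(I-P_\ell)y = -Lx$, so
\[
L_\ell y = L P_\ell y + (I-P_\ell) y = Lx + (-Lx) = 0.
\]
To finish I would confirm $y \neq 0$: if $y = 0$ then $x = Lx$ would lie in both $P_\ell H$ and $(I-P_\ell)H$, forcing $x = 0$, a contradiction. This produces a nonzero element in $\ker L_\ell$, which contradicts injectivity of $L_\ell$ and completes the contrapositive.

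The main obstacle is picking the right witness $y$. The key is recognizing that the hypothesis $P_\ell L x = 0$ pushes $Lx$ into the tail subspace $(I-P_\ell)H$, which is exactly what makes the two halves of $L_\ell y$ cancel. Once this cancellation is seen, the remaining verifications (the explicit formula for $L_\ell y$ and the nonvanishing of $y$) are purely routine, and the overall argument is short because the reduction via Lemma \ref{firstone} replaces an infinite-dimensional invertibility question with a finite-dimensional one.
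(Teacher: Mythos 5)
Your proof is correct and takes essentially the same route as the paper: both reduce via Lemma \ref{firstone} to showing that $P_\ell L P_\ell$ is injective on the finite-dimensional space $P_\ell H$, and both exhibit the same witness vector (your $y = x - Lx$ is exactly the paper's $P_\ell x - (I-P_\ell)LP_\ell x$ once $x = P_\ell x$ and $P_\ell Lx = 0$). You phrase it as a contrapositive while the paper argues directly, but the key observation --- that $P_\ell Lx = 0$ pushes $Lx$ into $(I-P_\ell)H$ so the two pieces of $L_\ell y$ cancel --- is identical.
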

\begin{proof}[Proof of Lemma \ref{secondone}]
We only need to show that $P_\ell L P_\ell$ is one to one on $P_\ell H$ when $L_\ell$ is one to one on $H$, and apply Lemma \ref{firstone}.  Let $(P_\ell L P_\ell) P_\ell x =0.$  We have
\begin{eqnarray}
L_\ell [ P_\ell x - (I-P_\ell)L P_\ell x)] & = & L_\ell P_\ell x -L_\ell (I-P_\ell)L P_\ell x \nonumber\\
& = & L_\ell P_\ell x - (I-P_\ell)L P_\ell x\nonumber\\
& = & L_\ell P_\ell x-L P_\ell x = 0,\nonumber
\end{eqnarray}
where the last equality follows from $L_\ell P_\ell = L P_\ell$.  Since $L_\ell$ is one to one, we have that $P_\ell x = (I-P_\ell)LP_\ell x$, so that $P_\ell x = 0.$ 
\end{proof}

\begin{lem}\label{thirdone}
Let $(e_n)_{n \in \mathbb{N}}$ be an orthonormal basis for $H$, and $(f_n)_{n \in \mathbb{N}}$ be a Riesz basis for $H$, and $(k_\ell)_{\ell \in \mathbb{N}}\subset \mathbb{N}$ be an increasing sequence. Let $P_\ell$ be the orthogonal projection onto $\mathrm{span}\{ e_n\}_{n \leq k_\ell}$, then the following are equivalent:\\

\noindent 1) The operator $L$ is uniformly invertible with respect to $(P_\ell)_{\ell \in \mathbb{N}}$.\\
\noindent 2) For all $\ell >0$, $L_\ell$ is an onto isomorphism, and $\sup_{\ell \in \mathbb{N}} \|L_\ell^{-1} \| < \infty.$
\end{lem}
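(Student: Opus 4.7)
The plan is to convert the lemma into an operator-algebraic identity relating $L_\ell^{-1}$ to the finite-dimensional inverse $A_\ell$ of $P_\ell L P_\ell$, and then read off uniform boundedness in both directions from that identity. Invertibility of each individual $L_\ell$ is already equivalent to invertibility of each individual $P_\ell L P_\ell$ by Lemma \ref{firstone} (the change of indexing from $\ell$ to $k_\ell$ is harmless), so the only real content here is the uniformity statement.

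Set $B_\ell := P_\ell A_\ell P_\ell$, regarded as an operator on $H$ that vanishes on the complement of $P_\ell H$; its operator norm coincides with the matrix norm $\|(\pi_{k_\ell}(\phi L \phi^{-1})\pi_{k_\ell})^{-1}\|$ appearing in the definition of uniform invertibility. Revisiting the element $y = P_\ell A_\ell P_\ell x + P_\ell x - L P_\ell A_\ell P_\ell x$ from the proof of Lemma \ref{firstone} (which satisfies $L_\ell y = P_\ell x$) and combining with the trivial identity $L_\ell (I - P_\ell) = I - P_\ell$, I would derive the closed form
\begin{equation}
L_\ell^{-1} = I + (I-L)B_\ell. \nonumber
\end{equation}
The verification is a direct calculation using the two relations $B_\ell P_\ell = B_\ell$ and $(P_\ell L P_\ell) B_\ell = P_\ell$, which give $L_\ell B_\ell = L B_\ell$ and $L_\ell(L B_\ell) = L P_\ell + L B_\ell - P_\ell$ on $H$; these cancel to $L_\ell(I + B_\ell - L B_\ell) = I$.

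Compressing this identity by $P_\ell$ on both sides recovers $B_\ell$ from $L_\ell^{-1}$, since $P_\ell(I-L)B_\ell P_\ell = B_\ell - P_\ell$ gives $P_\ell L_\ell^{-1} P_\ell = B_\ell$. Together these yield the two-sided bound
\begin{equation}
\|B_\ell\| \;\leq\; \|L_\ell^{-1}\| \;\leq\; 1 + (1 + \|L\|)\,\|B_\ell\|, \nonumber
\end{equation}
in which $\|L\|$ is finite because $L$ extends to the bounded Riesz-basis isomorphism. Taking $\sup_\ell$ on both sides is the desired equivalence. The principal pitfall is keeping straight the distinction between $A_\ell$ viewed as a matrix on $P_\ell H$ and its zero-extension $B_\ell$ acting on all of $H$, and checking that the matrix norm used in the definition of uniform invertibility agrees with the operator norm of $B_\ell$ on $H$; once that bookkeeping is settled, the algebra is short.
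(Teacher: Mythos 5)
Your proof is correct and is essentially the same as the paper's: both rest on Lemma \ref{firstone} for individual invertibility and on an explicit operator identity expressing $L_\ell^{-1}$ via the finite-dimensional inverse, from which the equivalence of the uniform bounds follows; your identity $L_\ell^{-1} = I + (I-L)B_\ell$ is just the algebraically simplified form of the paper's identity (\ref{ident}). The only cosmetic difference is that you verify the identity by directly checking $L_\ell\bigl(I + (I-L)B_\ell\bigr) = I$ from the defining properties of $A_\ell$, whereas the paper first invokes $(P_\ell L P_\ell)^{-1} = P_\ell L_\ell^{-1} P_\ell$; your bookkeeping about $B_\ell$ versus $A_\ell$ and the matching of matrix and operator norms is handled correctly.
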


\begin{proof}[Proof of Lemma \ref{thirdone}]
\noindent 1) $\Longrightarrow$ 2): By Lemma \ref{firstone}, we only need to show that that $$\sup_{\ell \in \mathbb{N}} \|L_\ell^{-1} \| < \infty.$$  This follows from the identity\\
\begin{equation}\label{ident}
L_\ell^{-1} = [I-(I-P_\ell)L](P_\ell L P_\ell)^{-1}+I-P_\ell,
\end{equation}
which is hereby demonstrated:
\begin{eqnarray}
[I-(I-P_\ell)L](P_\ell L P_\ell)^{-1}+I-P_\ell & = & [I-(I-P_\ell)L]P_\ell L_\ell^{-1} P_\ell+I-P_\ell\\
& = & P_\ell L_\ell^{-1} P_\ell -L P_\ell L_\ell^{-1} P_\ell+I\nonumber\\
& = & (I-L)P_\ell L_\ell^{-1} P_\ell +I.\nonumber
\end{eqnarray}
We have $(I-L)P_\ell = I -L_\ell$, so
\begin{eqnarray}
[I-(I-P_\ell)L](P_\ell L P_\ell)^{-1}+I-P_\ell & = & (I-L) L_\ell^{-1} P_\ell +I\\
& = & L_\ell^{-1}P_\ell -P_\ell +I.\nonumber
\end{eqnarray}
Noting that $L_\ell (I-P_\ell) = I-P_\ell$, we obtain $L_\ell^{-1}P_\ell -P_\ell +I = L_\ell^{-1}$, which proves the identity.\\

\noindent 2) $\Longrightarrow$ 1): Noting that $(P_\ell L P_\ell)^{-1} = P_\ell L_\ell^{-1} P_\ell$ yields the result.\\
\end{proof}

\section{The first main result}\label{S:3}

\noindent We begin with some necessary definitions:\\

\begin{defin}\label{impdef}
Define $C_{\ell,d} = \{-\ell,\ldots,\ell\}^d$, and $e_n(x) = \frac{1}{(2\pi)^{d/2}}e^{i\langle x , n \rangle}$ for $n \in \mathbb{Z}^d$.  Let $P_\ell: L_2([-\pi,\pi]^d) \rightarrow L_2([-\pi,\pi]^d)$ be the orthogonal projection from $ L_2([-\pi,\pi]^d)$ onto $\mathrm{span}(e_n)_{n \in C_{\ell,d}}$.
\end{defin}

\noindent Let $(f_n)_{n \in \mathbb{Z}^d}$ be an exponential Riesz basis.  In the following sections, we abbreviate the statement ``$(f_n)_{n \in \mathbb{Z}^d}$ is a uniformly invertible Riesz basis with respect to the projections $(P_\ell)_{\ell \in \mathbb{N}}$ defined in definition \ref{impdef}'' by ``$(f_n)_{n \in \mathbb{Z}^d}$ is a uniformly invertible Riesz basis''.\\ 

\noindent To avoid confusion of indices, we write $t \in \mathbb{R}^d$ as $t=(t(1),\cdots,t(d))$.\\

\noindent For $\ell, d \in \mathbb{N}$ define the multivariate polynomial $$Q_{d,\ell}(t) = \prod_{k_1 =1}^\ell \Big(1-\frac{t(1)^2}{k_1^2}\Big)\cdot\ldots\cdot\prod_{k_d =1}^\ell \Big(1-\frac{t(d)^2}{k_d^2}\Big), \quad t = (t(1),\cdots , t(d)).$$

\noindent Here is the first main result of this paper.

\begin{thm}\label{maintheorem}
Let $(t_n)_{n \in \mathbb{Z}^d} \subset \mathbb{R}^d$, and define $f_n(x) = \frac{1}{(2\pi)^{d/2}}e^{i\langle x , t_n \rangle}$ for $n \in \mathbb{Z}^d$.  Let $(f_n)_{n \in \mathbb{Z}^d}$ be a uniformly invertible Riesz basis, then for all $f \in PW_{[-\pi,\pi]^d}$, there exists a unique sequence of polynomials $(\Psi_\ell)_{\ell \in \mathbb{N}}$,  $\Psi_\ell :\mathbb{R}^d \rightarrow \mathbb{R},$ such that\\

\noindent (a) $\Psi_\ell$ has coordinate degree at most $2 \ell$.\\
\noindent (b) $\Psi_\ell (t_n) = f(t_n)$ for all $n \in C_{\ell,d}$.\\
\noindent (c) $f(t) = \lim_{\ell \rightarrow \infty} \Psi_\ell(t) \frac{\mathrm{SINC} (\pi t)}{Q_{d,\ell} (t)}$, where the limit is both $L_2$ and uniform.
\end{thm}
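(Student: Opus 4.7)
\emph{Proof plan.} The proof divides into three ingredients.  The core observation is the isomorphism $W_\ell := \mathrm{span}\bigl(\mathrm{SINC}\pi(\cdot-n)\bigr)_{n\in C_{\ell,d}} = \bigl\{p\cdot\mathrm{SINC}(\pi\cdot)/Q_{d,\ell} : p\text{ a polynomial of coordinate degree}\leq 2\ell\bigr\}$; both sides are $(2\ell+1)^d$-dimensional, $\mathrm{SINC}/Q_{d,\ell}=\prod_{j=1}^d\prod_{k>\ell}(1-t(j)^2/k^2)$ is entire (the real zeros of $Q_{d,\ell}$ coincide with zeros of $\mathrm{SINC}$), and a Paley--Wiener growth bound places $p\cdot\mathrm{SINC}/Q_{d,\ell}$ in $PW_{[-\pi,\pi]^d}$.

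For (a) and (b), I would show $(t_n)_{n\in C_{\ell,d}}$ is a unisolvent node set for polynomials of coordinate degree $\leq 2\ell$; this delivers existence and uniqueness of $\Psi_\ell$ by a dimension count.  Suppose $p$ has coordinate degree $\leq 2\ell$ and $p(t_n)=0$ for every $n\in C_{\ell,d}$.  Then $g:=p\cdot\mathrm{SINC}/Q_{d,\ell}\in W_\ell$ vanishes at each $t_n$, so its Fourier preimage $\hat g\in V_\ell:=\mathrm{span}(e_n)_{n\in C_{\ell,d}}$ satisfies $\langle\hat g,f_n\rangle=g(t_n)=0$; since $\hat g\in V_\ell$ this equals $\langle\hat g,P_\ell f_n\rangle$.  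Uniform invertibility at level $\ell$ makes $P_\ell L P_\ell$ bijective on $V_\ell$, hence $\{P_\ell f_n\}_{n\in C_{\ell,d}}$ spans $V_\ell$, forcing $\hat g=0$ and $p=0$.

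For (c), Lemma~\ref{thirdone} yields the Riesz basis $(f_n)_{n\in C_{\ell,d}}\cup(e_n)_{n\notin C_{\ell,d}}$ with $\sup_\ell\|L_\ell^{-1}\|<\infty$, and the adjoint of the same uniform invertibility makes the evaluation map $\hat g\mapsto(\langle\hat g,f_n\rangle)_{n\in C_{\ell,d}}$ from $V_\ell$ to $\mathbb{C}^{(2\ell+1)^d}$ a uniformly well-conditioned isomorphism.  Set $g_\ell:=\Psi_\ell\,\mathrm{SINC}/Q_{d,\ell}\in W_\ell$ and $s_n^{(\ell)}:=\mathrm{SINC}(\pi t_n)/Q_{d,\ell}(t_n)$; by (b), $g_\ell(t_n)=f(t_n)s_n^{(\ell)}$.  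Comparing with the orthogonal projection $P_\ell\hat f$, whose nodal data is $(f(t_n)-\langle\hat f,(I-P_\ell)f_n\rangle)_{n\in C_{\ell,d}}$, gives
\[
\|\mathcal{F}^{-1}(g_\ell)-P_\ell\hat f\|_{L_2}\leq C\Bigl(\bigl\|\bigl(f(t_n)(s_n^{(\ell)}-1)\bigr)_n\bigr\|_{\ell_2}+\bigl\|\bigl(\langle\hat f,(I-P_\ell)f_n\rangle\bigr)_n\bigr\|_{\ell_2}\Bigr).
\]
The second term is bounded by $\sqrt{B}\,\|(I-P_\ell)\hat f\|\to 0$ via the Riesz upper frame bound $B$ for $(f_n)$.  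Combined with $\|(I-P_\ell)\hat f\|\to 0$ and a Pythagoras argument in $V_\ell\oplus V_\ell^\perp$, this reduces convergence of $g_\ell\to f$ in $L_2$ to bounding the first error term; preliminary fact (5) then upgrades $L_2$ convergence to uniform convergence on $\mathbb{R}^d$.

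The main obstacle is controlling $\sum_{n\in C_{\ell,d}}|f(t_n)|^2|s_n^{(\ell)}-1|^2\to 0$: nodes $t_n$ with index $n$ near the boundary of $C_{\ell,d}$ may have coordinates of order $\ell$, so $s_n^{(\ell)}$ need not approach $1$ uniformly in $n$.  I would split $C_{\ell,d}$ into an interior block (where $|s_n^{(\ell)}-1|=O(\|t_n\|^2/\ell)$ via the tail estimate $\sum_{k>\ell}t^2/k^2\leq t^2/\ell$) and a boundary block (controlled by the $\ell_2$-tail of the sampling sequence, which satisfies $\sum_n|f(t_n)|^2\leq B\|f\|^2$ by the Riesz frame bound), then close via dominated convergence.
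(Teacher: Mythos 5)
Your proposal is correct and follows essentially the same route as the paper, which also reduces statement (c) to showing $\sum_{n\in C_{\ell,d}}|f(t_n)|^2\bigl(1-\tfrac{\mathrm{SINC}\,\pi t_n}{Q_{d,\ell}(t_n)}\bigr)^2\to 0$ and then splits this sum into a near-origin block, where the ratio is uniformly close to $1$, and a far block controlled by the $\ell_2$-tail of $(f(t_n))_n$.  Where your presentation diverges: for (a) and (b) you prove unisolvence directly from invertibility of $P_\ell L P_\ell$ (if $(P_\ell L P_\ell)e_n=P_\ell f_n$ is a basis of $V_\ell$, a polynomial vanishing at all $t_n$, $n\in C_{\ell,d}$, has Fourier preimage in $V_\ell$ orthogonal to all $P_\ell f_n$, hence is zero), whereas Lemma~\ref{othermain} in the paper proves the full equivalence by explicitly constructing the biorthogonals $G_{\ell,n}=\phi_{\ell,n}\cdot\mathrm{SINC}/Q_{d,\ell}$; and for (c) you compare $\mathcal{F}^{-1}(g_\ell)$ against $P_\ell\mathcal{F}^{-1}(f)$ through the uniformly well-conditioned nodal evaluation map $V_\ell\ni\hat g\mapsto(\langle\hat g,f_n\rangle)_{n\in C_{\ell,d}}$ (i.e.\ $(P_\ell L^* P_\ell)$), whereas the paper expands $f$ in the Riesz basis $(\mathcal{F}(f_{\ell,n}^*))_n$ and uses the identity $L_\ell^{-1}=[I-(I-P_\ell)L](P_\ell LP_\ell)^{-1}+I-P_\ell$ from Lemma~\ref{thirdone}.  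Your version is slightly leaner in that it proves only the one direction of each reduction the theorem actually needs.  To close the final $\ell_2$-sum estimate cleanly you should fix an explicit cutoff such as $R_\ell=\ell^{1/3}$ (so that $R_\ell\to\infty$ while $R_\ell^2/\ell\to 0$: the bound $1-s_n^{(\ell)}=O(\|t_n\|_2^2/\ell)$ is only helpful on $\{\|t_n\|_\infty\le R_\ell\}$) and also invoke $0\le\mathrm{SINC}(\pi t_n)/Q_{d,\ell}(t_n)\le 1$ globally, which is exactly Proposition~\ref{sinclimitstuff}(1), to get $|s_n^{(\ell)}-1|\le 2$ on the boundary block; the paper uses $R_\ell=(\ell+1)/\ell^{2/3}$ for the same purpose.
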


\noindent Note: The expression in statement (c) of Theorem \ref{maintheorem} has removable singularities, but these can be evaluated by 
\begin{equation}
\lim_{t \rightarrow n} \frac{\mathrm{sinc} (\pi t)}{Q_{1,\ell} (t)} = \frac{(\ell!)^2}{(\ell+n)!(\ell-n)!},\quad n \in \{-\ell,\ldots \ell \}.\nonumber
\end{equation}

\noindent The proof of Theorem \ref{maintheorem} requires several lemmas, beginning with the following equivalence between the existence of particular Riesz bases and a polynomial interpolation condition:

\begin{lem}\label{othermain}
Let $(t_n)_{n \in \mathbb{Z}^d} \subset \mathbb{R}^d$ where  $f_n(x) = \frac{1}{(2\pi)^{d/2}}e^{i\langle x , t_n \rangle}$ and $e_n(x) = \frac{1}{(2\pi)^{d/2}}e^{i\langle x , n \rangle}$.  The sequence $ (f_{\ell,n})_{n \in \mathbb{Z}^d} := (f_n)_{n \in C_{\ell,d}} \cup (e_n)_{n \notin C_{\ell,d}}$ is a Riesz basis for $L_2([-\pi,\pi]^d)$ iff the following conditions hold.\\

\noindent 1) For all $n \in C_{\ell,d}$, $t_n \notin (\mathbb{Z}\setminus \{-\ell,\cdots,\ell\})^d$.\\
\noindent 2) For any sequence $(c_k)_{k \in C_{\ell,d}}$, there exists a unique polynomial $\Psi_\ell$ with coordinate degree at most $2\ell$ such that $\Psi_\ell(t_k)= c_k$ for $k \in C_{\ell,d}$.\\
\end{lem}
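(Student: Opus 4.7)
The strategy is to apply Lemma~\ref{firstone} to the operator $L$ defined on $\mathrm{span}\{e_n\}_{n \in \mathbb{Z}^d}$ by $L e_n = f_n$: since $(f_{\ell,n})_{n\in\mathbb{Z}^d} = (f_n)_{n \in C_{\ell,d}} \cup (e_n)_{n \notin C_{\ell,d}}$, Lemma~\ref{firstone} reduces the Riesz basis property to invertibility of the finite matrix $P_\ell L P_\ell$, whose entries are computed by direct integration as
\begin{equation*}
[P_\ell L P_\ell]_{k,n} \;=\; \langle f_n, e_k\rangle \;=\; \mathrm{SINC}(\pi(t_n - k)), \qquad k,n \in C_{\ell,d}.
\end{equation*}
The task becomes characterizing invertibility of this $\mathrm{SINC}$ matrix in terms of conditions 1 and 2.

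The heart of the proof is a factorization of each entry. From $\sin\pi(t-k) = (-1)^k \sin\pi t$ for $k \in \mathbb{Z}$ one has $\mathrm{sinc}\,\pi(t-k) = \tfrac{(-1)^k t}{t-k}\,\mathrm{sinc}\,\pi t$, and Euler's product gives $\mathrm{sinc}\,\pi t = Q_{1,\ell}(t)\,R_\ell(t)$, where $R_\ell(t) := \prod_{n>\ell}(1-t^2/n^2)$. Since $(t-k)$ already divides $Q_{1,\ell}(t)$ for each $k \in \{-\ell,\ldots,\ell\}\setminus\{0\}$ (and the case $k=0$ is handled by the numerator $t$), the rational expression $\tfrac{(-1)^k t\, Q_{1,\ell}(t)}{t-k}$ is in fact a polynomial of degree at most $2\ell$; a short calculation identifies it as $\gamma_k\,\lambda_k(t)$, where $\lambda_k$ is the univariate Lagrange polynomial at $k$ with respect to nodes $\{-\ell,\ldots,\ell\}$ and $\gamma_k = (\ell-k)!(\ell+k)!/(\ell!)^2 \neq 0$. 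Tensoring across coordinates yields, for $k \in C_{\ell,d}$ and $t \in \mathbb{R}^d$,
\begin{equation*}
\mathrm{SINC}(\pi(t-k)) \;=\; c_k\,\Lambda_k(t)\,R(t),
\end{equation*}
with $c_k := \prod_{j=1}^d \gamma_{k(j)} \neq 0$, $\Lambda_k(t) := \prod_{j=1}^d \lambda_{k(j)}(t(j))$ the multivariate Lagrange polynomial at $k$ of coordinate degree at most $2\ell$, and $R(t) := \prod_{j=1}^d R_\ell(t(j)) = \mathrm{SINC}(\pi t)/Q_{d,\ell}(t)$.

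This factorization writes $P_\ell L P_\ell = D_c\,M\,D_R$, where $D_c := \mathrm{diag}(c_k)_{k\in C_{\ell,d}}$, $M_{k,n} := \Lambda_k(t_n)$, and $D_R := \mathrm{diag}(R(t_n))_{n\in C_{\ell,d}}$. Since $D_c$ is always invertible, $P_\ell L P_\ell$ is invertible iff both $D_R$ and $M$ are. Because $R_\ell(t)$ vanishes precisely on $\mathbb{Z}\setminus\{-\ell,\ldots,\ell\}$, invertibility of $D_R$ is exactly condition 1; and because $\{\Lambda_k\}_{k\in C_{\ell,d}}$ is a basis for the polynomials of coordinate degree at most $2\ell$, invertibility of $M$ is equivalent to bijectivity of the evaluation map $\Psi \mapsto (\Psi(t_n))_{n\in C_{\ell,d}}$ on this polynomial space, which is condition 2. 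Combining with Lemma~\ref{firstone} gives both directions of the equivalence simultaneously. The main technical step is the single-variable factorization identity --- verifying that $(-1)^k t\,Q_{1,\ell}(t)/(t-k)$ is indeed a polynomial of degree $\leq 2\ell$ and matches $\gamma_k\lambda_k$ up to the stated constant --- after which the tensor-product extension to $d$ variables and the matrix-factorization accounting are routine.
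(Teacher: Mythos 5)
Your proof is correct, but it takes a genuinely different route from the paper. The paper's forward direction assumes the Riesz basis property and works through the biorthogonal system: it shows $f_{\ell,n}^*$ lives in the range of $P_\ell$, passes to the Fourier transform $G_{\ell,n}$, rewrites it as $\phi_{\ell,n}(t)\,\mathrm{SINC}(\pi t)/Q_{d,\ell}(t)$ with $\phi_{\ell,n}$ a polynomial, and then extracts conditions 1 and 2 from the requirement $G_{\ell,n}(t_m)=\delta_{nm}$; the backward direction constructs the candidate biorthogonals $\Phi_{\ell,n}$ explicitly, verifies they land in $PW_{[-\pi,\pi]^d}$, uses them to show $L_\ell$ is injective, and finishes via Lemma~\ref{secondone}. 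You instead go straight to the criterion of Lemma~\ref{firstone}: invertibility of the finite matrix $P_\ell L P_\ell$ with entries $\mathrm{SINC}(\pi(t_n-k))$, and you factor this matrix as $D_c\,M\,D_R$ with $D_c$ always invertible, $D_R=\mathrm{diag}(R(t_n))$ encoding condition 1, and $M=(\Lambda_k(t_n))$ encoding condition 2 via the fact that the Lagrange cardinal polynomials $\Lambda_k$ span the polynomial space in question. The single-variable identity $t\,Q_{1,\ell}(t)/(t-k)=(-1)^\ell(\ell!)^{-2}\prod_{j\neq k}(t-j)$ is exactly right and the tensoring is routine, so the factorization is sound, and both directions of the equivalence fall out of the linear-algebra fact that a product of square matrices is invertible iff every factor is. Your argument is shorter and more symmetric. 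What it does not produce, and the paper's longer route does, are the explicit formulas for $G_{\ell,n}$ and $\Phi_{\ell,n}$ (equation~(\ref{biorth})), which the paper reuses in the proof of Proposition~\ref{blip2}; if one follows your route one must still recompute those biorthogonals later, so the two approaches end up roughly even in total work across the paper, with yours being locally cleaner.
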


\begin{proof}[Proof of Lemma \ref{othermain}]
Suppose that the sequence $(f_n)_{n \in \mathbb{Z}^d}$ is a Riesz basis for $L_2([-\pi,\pi]^d)$.  We compute the biorthogonal functions of $(f_{\ell,n})_{n \in \mathbb{Z}^d}$ when $n \in C_{\ell,d}$:
\begin{eqnarray}
f_{\ell,n}^*  & = & \sum_{k \in \mathbb{Z}^d} \langle f_{\ell,n}^* , e_k \rangle e_k = \sum_{k \in C_{\ell,d}} \langle f_{\ell,n}^* , e_k \rangle e_k + \sum_{k \notin C_{\ell,d}}  \langle f_{\ell,n}^* , e_k \rangle e_k\nonumber\\
& = & \sum_{k \in C_{\ell,d}} \langle f_{\ell,n}^* , e_k \rangle e_k.\nonumber
\end{eqnarray}
Passing to the Fourier transform and defining $G_{\ell, n} = \mathcal{F}(f_{\ell,n}^*)$, we have
\begin{eqnarray}\label{firstsum}
G_{\ell,n}(t) & = & \sum_{k \in C_{\ell,d}} G_{\ell,n}(k)\mathrm{SINC}\pi(t-k)\\
& = & \bigg( \sum_{k \in C_{\ell,d}}\frac{G_{\ell,n}(k)(-1)^{k(1)+\ldots+k(d)} t(1)\cdot\ldots\cdot t(d)}{(t(1)-k(1))\cdot \ldots\cdot(t(d)-k(d))} \bigg)\mathrm{SINC}(\pi t), \quad t \in \mathbb{R}^d.\nonumber
\end{eqnarray}
Denote the $k^{th}$ summand in equation (\ref{firstsum}) by $A_k$, then
\begin{eqnarray}
A_{\ell,n,k}\!\! & = &\!\! A_{\ell,n,k} \frac{\prod_{j_1 = -\ell \atop j_1 \neq k(1)}^\ell (t(1)-j_1) \cdot \ldots \cdot \prod_{j_d = -\ell \atop j_d \neq k(d)}^\ell (t(d)-j_d)}{\prod_{j_1 = -\ell \atop j_1 \neq k(1)}^\ell (t(1)-j_1) \cdot \ldots \cdot \prod_{j_d = -\ell \atop j_d \neq k(d)}^\ell (t(d)-j_d)}\nonumber\\
& = &\!\! \frac{G_{\ell,n}(k)(-1)^{k(1)+\ldots+k(d)}t(1)\cdot\ldots\cdot t(d) \prod_{j_1 = -\ell \atop j_1 \neq k(1)}^\ell (t(1)-j_1) \cdot \ldots \cdot \prod_{j_d = -\ell \atop j_d \neq k(d)}^\ell (t(d)-j_d)}{\prod_{j_1 = -\ell}^\ell (t(1)-j_1) \cdot \ldots \cdot \prod_{j_d = -\ell}^\ell (t(d)-j_d)}\nonumber\\
& = &\!\! \frac{G_{\ell,n}(k)\frac{1}{(\ell!)^2}(-1)^{k(1)+\ldots+k(d) +\ell d}\prod_{j_1 = -\ell \atop j_1 \neq k(1)}^\ell (t(1)-j_1) \cdot \ldots \cdot \prod_{j_d = -\ell \atop j_d \neq k(d)}^\ell (t(d)-j_d)}{\prod_{j_1 = 1}^\ell \Big(1-\frac{t(1)^2}{j_1^2}\Big) \cdot \ldots \cdot \prod_{j_d = 1}^\ell \Big(1-\frac{t(d)^2}{k_d^2}\Big)}\nonumber\\
& = &\!\! \frac{p_{\ell,n,k} (t)}{Q_{d,\ell} (t)},\nonumber
\end{eqnarray}
where $p_{\ell,n,k}$ is some polynomial with coordinate degree at most $2\ell$. Substituting into equation ({\ref{firstsum}}), we obtain
$$G_{\ell,n}(t) = \Big(\sum_{k \in C_{\ell,d}} p_{\ell,n,k}(t) \Big)\frac{\mathrm{SINC}(\pi t)}{Q_{d,\ell} (t)}:= \phi_{\ell,n} (t)\frac{\mathrm{SINC}(\pi t)}{Q_{d,\ell} (t)},$$ where $\phi_{\ell,n}$ is a polynomial having coordinate degree at most $2 \ell$.  The fact that each zero of $\mathrm{sinc}(\pi z)$ has multiplicity one implies that the zero set of $\frac{\mathrm{SINC}(\pi t)}{Q_{d,\ell} (t)}$ (which is entire) is $(\mathbb{Z}\setminus \{-\ell,\cdots, \ell\})^d \subset \mathbb{C}$. Using that $\mathrm{SINC}\pi(x-y)$ is the reproducing kernel for $PW_{[-\pi,\pi]^d}$, we see that  $$G_{\ell,n}(t_m) = \delta_{nm},$$ for all $n,m \in C_{\ell,d}$. This yields that $$1 = \phi_{\ell,n} (t_n)\Big(\frac{\mathrm{SINC}(\pi t)}{Q_{d,\ell} (t)}\Big)\Big|_{t_n}.$$ This shows $\phi_{\ell,n} (t_n) \neq 0$ and $\frac{\mathrm{SINC}(\pi t_n)}{Q_{d,\ell} (t_n)}\neq0$ for $n \in C_{\ell,d}$, and that $t_n \notin (\mathbb{Z}\setminus \{-\ell,\cdots, \ell\})^d$ (statement 1) of Lemma \ref{othermain}).\\

\noindent For $n,m \in C_{\ell,d}, n\neq m, $ $$0 = G_{\ell,n}(t_m) = \phi_{\ell,n} (t_m)\Big(\frac{\mathrm{SINC}(\pi t)}{Q_{d,\ell} (t)}\Big)\Big|_{t_m},$$ We conclude that 
\begin{displaymath}
   \phi_{\ell,n}(t_m) = \left\{
     \begin{array}{lr}
       \frac{Q_{d,\ell}(t_n)}{\mathrm{SINC}\pi t_n} \neq 0,  &  n = m\\
       0, & n \neq m
     \end{array}
   \right.
\end{displaymath}
for $n,m \in C_{\ell,d}$ From this, the ``existence'' part of statement 2) in Lemma \ref{othermain} readily follows.  Restated, the evaluation map taking the space of all polynomials of coordinate degree at most $2 \ell$ to $\mathbb{R}^{(2\ell+1)^d}$ is onto.  These spaces have the same dimension, hence the evaluation map is a bijection, which completes the proof of statement 2).\\

\noindent Suppose that 1) and 2) hold.  For $n \in C_{\ell,d}$, let $p_{\ell,n}$ be the unique polynomial of coordinate degree at most $2\ell$ such that $p_{\ell,n} (t_m) = \delta_{nm}$ for $m \in C_{\ell,d}$.  Define 
\begin{equation}\label{biorth}
\Phi_{\ell,n} (t) = \frac{Q_{d,\ell} (t_n) \mathrm{SINC}\pi t}{Q_{d,\ell}(t) \mathrm{SINC}\pi t_n}p_{\ell,n}(t).
\end{equation}
Partial fraction decomposition can be used to show that $\Phi_{\ell,n} \in PW_{[-\pi,\pi]^d}$.  For $n,m \in C_{\ell,d}$, we therefore have $$\delta_{n,m} = \langle \Phi_{\ell,n} (\cdot) , \mathrm{SINC}\pi((\cdot)-t_m) \rangle = \langle \mathcal{F}^{-1}(\Phi_{\ell,n}) , f_m \rangle.$$
Let $L_\ell$ be defined as before.  Let $f=\sum_{n \in \mathbb{Z}^d} c_n e_n$ such that $L_\ell (f) = 0$, then $$0 = \sum_{n \in C_{\ell,d}} c_n f_n+\sum_{n \notin C_{\ell,d}}c_n e_n.$$  If, for each $n \in C_{\ell,d}$ we integrate the above equation against $\mathcal{F}^{-1}(\Phi_{\ell,n})$, we see that $c_k = 0$ for $k \in C_{\ell,d}$, so that $c_k =0$ for all $k \in \mathbb{Z}^d$.  $L_\ell$ is one to one, so by Lemma \ref{secondone}, it is an onto isomorphism from $L_2([-\pi,\pi]^d)$ to itself.
\end{proof}

\begin{proof}[Proof of (a) and (b) of Theorem \ref{maintheorem}]
Lemmas \ref{thirdone} and \ref{othermain} imply the existence of a unique sequence of polynomials satisfying statements (a) and (b) of Theorem \ref{maintheorem}, namely,
$$\Psi_\ell(t) = \sum_{n \in C_{\ell,d}} f(t_n) p_{\ell,n}(t),$$ where $p_{\ell,n}$ is defined as in the proof of Lemma \ref{othermain}.\\
\end{proof}

\noindent It remains to show that this sequence of polynomials satisfies the statement (c) of Theorem \ref{maintheorem}.

\begin{prop}\label{fourthone}
\noindent The following are equivalent.\\

\noindent 1) $L$ is uniformly invertible with respect to the projections $(P_\ell)_{\ell \in \mathbb{N}}$.\\
\noindent 2) For all $x \in H$, $\lim_{\ell \rightarrow \infty} (L_\ell^*)^{-1}(I-P_\ell)x=0$.
\end{prop}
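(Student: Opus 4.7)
The plan is to reformulate condition (2) as strong convergence of a discrete Galerkin-type inverse toward $(L^*)^{-1}$, after which both implications follow from the uniform boundedness principle together with duality of matrix norms on the finite-dimensional blocks.

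First, since $P_\ell$ is self-adjoint, $L_\ell^* = P_\ell L^* + I - P_\ell$. For any $x \in H$, let $y_\ell := (L_\ell^*)^{-1}(I-P_\ell)x$. Splitting the equation $L_\ell^* y_\ell = (I-P_\ell)x$ under the complementary projections $P_\ell$ and $I-P_\ell$ yields the two relations $P_\ell L^* y_\ell = 0$ and $(I-P_\ell) y_\ell = (I-P_\ell)x$. Substituting the second into the first, together with the convention \eqref{abusivenot} for the inverse, collapses to
\[
y_\ell \;=\; x \;-\; (P_\ell L^* P_\ell)^{-1}\, P_\ell L^* x.
\]
Hence (2) is equivalent to $(P_\ell L^* P_\ell)^{-1} P_\ell L^* x \to x$ for every $x \in H$. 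Since $L$ is an onto isomorphism, $(L^*)^{-1}$ exists, and the substitution $x = (L^*)^{-1} z$ reveals that (2) is in fact equivalent to the strong convergence
\[
(P_\ell L^* P_\ell)^{-1} P_\ell \;\longrightarrow\; (L^*)^{-1} \qquad \text{on } H.
\]

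For (1) $\Rightarrow$ (2): Uniform invertibility of $L$ means $\sup_\ell \|(P_\ell L P_\ell)^{-1}\| < \infty$, and duality of matrix norms on each finite-dimensional block gives $C := \sup_\ell \|(P_\ell L^* P_\ell)^{-1}\| < \infty$. The displayed identity for $y_\ell$ then forces
\[
\|y_\ell\| \;\le\; \bigl(C\|L\| + 1\bigr)\,\|(I - P_\ell) x\| \;\longrightarrow\; 0,
\]
since $P_\ell x \to x$. For (2) $\Rightarrow$ (1): the strong convergence of the previous paragraph, together with the Banach--Steinhaus theorem, yields $\sup_\ell \|(P_\ell L^* P_\ell)^{-1} P_\ell\| < \infty$; by convention \eqref{abusivenot} the operator $(P_\ell L^* P_\ell)^{-1}$ vanishes on $(I-P_\ell)H$, so this supremum coincides with $\sup_\ell \|(P_\ell L^* P_\ell)^{-1}\|$, and duality transfers the bound to $\sup_\ell \|(P_\ell L P_\ell)^{-1}\| < \infty$, which is uniform invertibility of $L$.

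The main obstacle is the first step: identifying the technical-looking quantity $(L_\ell^*)^{-1}(I-P_\ell)x$ with the Galerkin residual $x - (P_\ell L^* P_\ell)^{-1} P_\ell L^* x$ for the equation $L^* y = L^* x$. Once this reformulation is in hand, both directions of the equivalence are immediate consequences of Banach--Steinhaus applied to the discrete solution operators, and the self-adjoint symmetry $\|(P_\ell L P_\ell)^{-1}\| = \|(P_\ell L^* P_\ell)^{-1}\|$ on the finite-dimensional blocks.
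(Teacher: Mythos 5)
Your proof is correct and follows essentially the same route as the paper: the identity $(L_\ell^*)^{-1}(I-P_\ell)x = x - (P_\ell L^* P_\ell)^{-1}P_\ell L^* x$ that you derive by a block decomposition is exactly the paper's identity (\ref{prebiorthapprox}) in disguise, since $(L_\ell^*)^{-1}P_\ell = (P_\ell L^* P_\ell)^{-1}P_\ell$, and both directions then rest on Banach--Steinhaus. If anything your version is a touch more self-contained: passing to the finite block and using $\|(P_\ell L^* P_\ell)^{-1}\| = \|(P_\ell L P_\ell)^{-1}\|$ lands directly on the definition of uniform invertibility, whereas the paper concludes $\sup_\ell\|(L_\ell^*)^{-1}\|<\infty$ and implicitly appeals to Lemma \ref{thirdone} to translate that bound back into the matrix condition.
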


\begin{proof}[Proof of Proposition \ref{fourthone}]
\noindent It is clear that 1) implies 2). For the other direction, note that the equality $L_\ell^* = P_\ell L^* +I - P_\ell$ implies that
\begin{equation}\label{prebiorthapprox}
I = (L_\ell^*)^{-1} P_\ell L^* +(L_\ell^*)^{-1}(I-P_\ell).
\end{equation}
This implies that $(L_\ell^*)^{-1}P_\ell$ is pointwise bounded.  Together with the assumption in 2), this implies $(L_\ell^*)^{-1}$ is pointwise bounded, hence norm bounded by the uniform boundedness principle.  This yields uniform invertibility of $L$.
\end{proof}

\begin{lem}\label{blip}
\noindent The following are equivalent:\\

\noindent 1) For all $g \in L_2([-\pi,\pi]^d)$, we have
\begin{equation}\label{biorthapprox}
g = \lim_{\ell \rightarrow \infty} (L_\ell^*)^{-1} P_\ell L^* g.
\end{equation}
\noindent 2) $L$ is uniformly invertible.
\end{lem}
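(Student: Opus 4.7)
My plan is to prove Lemma \ref{blip} as an essentially immediate corollary of Proposition \ref{fourthone}, by rearranging the operator identity (\ref{prebiorthapprox}) that was already established in the course of that proposition's proof. The key observation is that applying
\[
I = (L_\ell^*)^{-1} P_\ell L^* + (L_\ell^*)^{-1}(I-P_\ell)
\]
to any $g \in L_2([-\pi,\pi]^d)$ yields
\[
g - (L_\ell^*)^{-1} P_\ell L^* g = (L_\ell^*)^{-1}(I - P_\ell) g.
\]
Hence the convergence statement (\ref{biorthapprox}) is equivalent, $g$-by-$g$, to $(L_\ell^*)^{-1}(I-P_\ell)g \to 0$. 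Quantifying over all $g$, condition (1) of Lemma \ref{blip} becomes precisely condition (2) of Proposition \ref{fourthone}.

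With this reformulation in hand, the proof reduces to two short implications. For (2)$\Rightarrow$(1), uniform invertibility of $L$ together with Lemma \ref{thirdone} gives that every $L_\ell$, and hence every $L_\ell^*$, is an onto isomorphism; identity (\ref{prebiorthapprox}) then applies, and Proposition \ref{fourthone} supplies the required pointwise convergence of $(L_\ell^*)^{-1}(I-P_\ell)g$ to zero, which by the rearrangement above is exactly (\ref{biorthapprox}). For (1)$\Rightarrow$(2), I read the hypothesis as presupposing that $(L_\ell^*)^{-1} P_\ell L^* g$ is meaningful for each $\ell$, so $(L_\ell^*)^{-1}$ exists and the same identity rearrangement shows $(L_\ell^*)^{-1}(I-P_\ell)g \to 0$ for every $g$; Proposition \ref{fourthone} then delivers uniform invertibility of $L$.

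The main point requiring care — not really an obstacle, but the only nontrivial bookkeeping — is confirming that $L_\ell^*$ is invertible so that identity (\ref{prebiorthapprox}) is legitimate on each side of the equivalence. In the forward direction this is handed to us by Lemma \ref{thirdone}; in the reverse direction it is built into the hypothesis. Once this is noted, the remainder of the argument is a one-line algebraic manipulation followed by citation of Proposition \ref{fourthone}, and no estimates or further lemmas are needed.
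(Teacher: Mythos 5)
Your proof is correct and matches the paper's approach exactly: the paper's proof of Lemma \ref{blip} is the single line ``Recall equation (\ref{prebiorthapprox}) and apply Proposition \ref{fourthone},'' and your proposal simply unfolds that by rearranging identity (\ref{prebiorthapprox}) to see that condition (1) of the lemma is precisely condition (2) of Proposition \ref{fourthone}. The invertibility check on $L_\ell^*$ that you flag (via Lemma \ref{thirdone} in the forward direction, and by interpretation of the hypothesis in the reverse) is left implicit in the paper and is handled correctly in your writeup.
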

\begin{proof}[Proof of Lemma \ref{blip}]
Recall equation (\ref{prebiorthapprox}) and apply Proposition \ref{fourthone}.
\end{proof}

\begin{prop}\label{blip2}
Statement (c) of Theorem \ref{maintheorem} is true iff
\begin{eqnarray}\label{equivlimit}
0 = \lim_{\ell \rightarrow \infty} \sum_{n \in C_{\ell,d}} |f(t_n)|^2\bigg[1- \frac{\mathrm{SINC} \pi t_n}{Q_{d,\ell} (t_n)} \bigg]^2:=  \lim_{\ell \rightarrow \infty} S_{\ell,d}, \quad f\in PW_{[-\pi,\pi]^d}.
\end{eqnarray}
\end{prop}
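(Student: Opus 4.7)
My plan is to expand both $f$ and the rational approximant $A_\ell(t) := \Psi_\ell(t)\,\mathrm{SINC}(\pi t)/Q_{d,\ell}(t)$ in the $\ell$-dependent Riesz basis $(\mathcal{F}(f_{\ell,n}^*))_{n \in \mathbb{Z}^d}$ of $PW_{[-\pi,\pi]^d}$, read off the coefficients of $f-A_\ell$, and then use Riesz-basis norm equivalence with constants uniform in $\ell$ (secured by Lemma \ref{thirdone}) to translate $\|f-A_\ell\|_{L_2}$ into a sum of squared coefficients. The equivalence between this sum and $S_{\ell,d}$, modulo a trivially vanishing tail, will then be immediate.

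First I would carry out the following identifications. Since $(f_{\ell,n})_{n \in \mathbb{Z}^d}$ is a Riesz basis by hypothesis and its biorthogonals on the Fourier side are $\mathcal{F}(f_{\ell,n}^*)$, with $\mathcal{F}(f_{\ell,n}^*) = \Phi_{\ell,n}$ for $n \in C_{\ell,d}$ (as computed in Lemma \ref{othermain}), the reproducing kernel property $f(\tau) = \langle f, \mathrm{SINC}\pi(\cdot-\tau)\rangle$ gives the $L_2$-convergent expansion
$$f = \sum_{n \in C_{\ell,d}} f(t_n)\,\Phi_{\ell,n} + \sum_{n \notin C_{\ell,d}} f(n)\,\mathcal{F}(f_{\ell,n}^*).$$
On the other hand, inserting $\Psi_\ell=\sum_{n \in C_{\ell,d}} f(t_n)\,p_{\ell,n}$ and invoking the factorization (\ref{biorth}), which yields $p_{\ell,n}(t)\,\mathrm{SINC}(\pi t)/Q_{d,\ell}(t) = [\mathrm{SINC}(\pi t_n)/Q_{d,\ell}(t_n)]\,\Phi_{\ell,n}(t)$, the approximant itself acquires a biorthogonal expansion
$$A_\ell = \sum_{n \in C_{\ell,d}} f(t_n)\,\frac{\mathrm{SINC}(\pi t_n)}{Q_{d,\ell}(t_n)}\,\Phi_{\ell,n}.$$
Subtracting, $f - A_\ell$ has coefficient $f(t_n)\big[1-\mathrm{SINC}(\pi t_n)/Q_{d,\ell}(t_n)\big]$ at $n \in C_{\ell,d}$ and coefficient $f(n)$ at $n \notin C_{\ell,d}$ in the Riesz basis $(\mathcal{F}(f_{\ell,n}^*))_{n \in \mathbb{Z}^d}$.

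To finish, I would invoke uniform invertibility: Lemma \ref{thirdone} gives $\sup_\ell\|L_\ell^{-1}\| < \infty$, while $\|L_\ell\| \le \|L\|+1$ is automatic, so via $f_{\ell,n}^* = (L_\ell^*)^{-1}e_n$ the Riesz-basis constants of $(\mathcal{F}(f_{\ell,n}^*))_{n \in \mathbb{Z}^d}$ are bounded independently of $\ell$. This yields
$$\|f-A_\ell\|_{L_2}^2 \asymp S_{\ell,d} + \sum_{n \notin C_{\ell,d}} |f(n)|^2$$
with absolute constants. Because $(\mathrm{SINC}\pi(\cdot-n))_{n\in\mathbb{Z}^d}$ is an orthonormal basis of $PW_{[-\pi,\pi]^d}$, Parseval gives $\sum_{n \in \mathbb{Z}^d}|f(n)|^2 = \|f\|_{L_2}^2 < \infty$, so the tail sum tends to $0$ as $\ell \to \infty$. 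Hence $\|f-A_\ell\|_{L_2} \to 0$ iff $S_{\ell,d}\to 0$; combined with fact 5 of section 2 (\emph{$L_2$ convergence in $PW_{[-\pi,\pi]^d}$ upgrades to uniform convergence on $\mathbb{R}^d$}), this delivers the claimed equivalence. The main obstacle is the uniformity in $\ell$ of the Riesz-basis constants: mere invertibility of $L$ would not suffice, and it is precisely the uniform invertibility hypothesis, channeled through Lemma \ref{thirdone}, that supplies it.
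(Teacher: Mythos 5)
Your proof is correct and follows essentially the same route as the paper's: both express the approximant $A_\ell = \Psi_\ell \cdot \mathrm{SINC}(\pi\cdot)/Q_{d,\ell}$ in the $\ell$-dependent dual Riesz basis $(\mathcal{F}(f_{\ell,n}^*))_n$ via the factorization \eqref{biorth}, identify the coefficients $f(t_n)[1 - \mathrm{SINC}\pi t_n / Q_{d,\ell}(t_n)]$, and then invoke the uniform bound $\sup_\ell \|L_\ell^{-1}\| < \infty$ (Lemma \ref{thirdone}) to convert $L_2$-norm convergence into $\ell_2$-coefficient convergence. The one organizational difference is that the paper routes through Lemma \ref{blip} (the statement $f = \lim_\ell (L_\ell^*)^{-1}P_\ell L^* \mathcal{F}^{-1}f$) to absorb the contribution from $n \notin C_{\ell,d}$, while you carry out the exact biorthogonal expansion of $f$ — noting, via the reproducing kernel and $f_{\ell,n} = e_n$ off $C_{\ell,d}$, that the tail coefficients are precisely $f(n)$ — and dispose of $\sum_{n\notin C_{\ell,d}}|f(n)|^2$ directly by Parseval. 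This is marginally more self-contained but logically equivalent; your closing observation that \emph{mere} invertibility of $L$ would fail to give the needed uniformity across $\ell$ correctly identifies where the hypothesis does its work.
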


\begin{proof}[Proof of Proposition \ref{blip2}]
\noindent Note that $Le_n = f_n$ implies that $f_n^* = (L^*)^{-1}e_n.$  Similarly, $f_{\ell,n}^* = (L_\ell^*)^{-1}e_n$.  Given $f \in PW_{[-\pi,\pi]^d}$, let $g= \mathcal{F}^{-1}(f)$.  Equation (\ref{biorthapprox}) shows:
\begin{eqnarray}
\mathcal{F}^{-1}(f) & = & \lim_{\ell \rightarrow \infty} (L_\ell^*)^{-1} \sum_{n \in C_{\ell,d}} \langle L^* g, e_n \rangle e_n = \lim_{\ell \rightarrow \infty} (L_\ell^*)^{-1} \sum_{n \in C_{\ell,d}} \langle g, f_n \rangle e_n\nonumber\\
& = & \lim_{\ell \rightarrow \infty} \sum_{n \in C_{\ell,d}} \langle g, f_n \rangle f_{\ell,n}^* = \lim_{\ell \rightarrow \infty} \sum_{n \in C_{\ell,d}} f(t_n) f_{\ell,n}^*.\nonumber
\end{eqnarray}
Passing to the Fourier transform, we have
\begin{equation}\label{blah}
f = \lim_{\ell \rightarrow \infty} \sum_{n \in C_{\ell,d}} f(t_n) \mathcal{F}(f_{\ell,n}^*), \quad f \in PW_{[-\pi,\pi]^d},
\end{equation}
where convergence in both $L_2$ and uniform.  The values of a function in $PW_{[-\pi,\pi]^d}$ on the set $(t_n)_{n \in \mathbb{Z}^d}$ uniquely determine the function.  This and equation (\ref{biorth}) show that $$\mathcal{F}(f_{\ell,n}^*)(t) = G_{\ell,n}(t) = \frac{Q_{d,\ell} (t_n) \mathrm{SINC}\pi t}{Q_{d,\ell}(t) \mathrm{SINC}\pi t_n}p_{\ell,n}(t), \quad n \in C_{\ell,d}.$$ 
This implies that
\begin{eqnarray}
\Psi_\ell(t) \frac{\mathrm{SINC} (\pi t)}{Q_{d,\ell} (t)} & = & \Big(\sum_{n \in C_{\ell,d}} f(t_n) p_{\ell,n}(t)\Big)  \frac{\mathrm{SINC} (\pi t)}{Q_{d,\ell} (t)}\nonumber\\
& = & \sum_{n \in C_{\ell,d}} f(t_n) \frac{\mathrm{SINC} \pi t_n}{Q_{d,\ell} (t_n)}\mathcal{F}(f_{\ell,n}^*)(t).\nonumber
\end{eqnarray}
Combined with equation (\ref{blah}), we see that statement (c) of Theorem \ref{maintheorem} holds iff
\begin{eqnarray}
0  =  \lim_{\ell \rightarrow \infty} \sum_{n \in C_{\ell,d}} f(t_n)\bigg[1- \frac{\mathrm{SINC} \pi t_n}{Q_{d,\ell} (t_n)} \bigg] \mathcal{F}(f_{\ell,n}^*),  \quad f\in PW_{[-\pi,\pi]^d},\nonumber
\end{eqnarray}
where the limit is in the $L_2$ sense. Passing to the inverse Fourier transform, statement (c) holds iff
\begin{eqnarray}\label{blip3}
0 = \lim_{\ell \rightarrow \infty} (L_\ell^*)^{-1}\bigg( \sum_{n \in C_{\ell,d}} f(t_n)\bigg[1- \frac{\mathrm{SINC} \pi t_n}{Q_{d,\ell} (t_n)} \bigg] e_n \bigg), \quad f\in PW_{[-\pi,\pi]^d}.
\end{eqnarray}
In addition to having uniformly norm bounded inverses, $L_\ell$ is pointwise bounded, so there exists $m, M>0$ such that for all $\ell \in \mathbb{N}$, $f \in PW_{[-\pi,\pi]^d}$, $m\| f\| \leq \|(L_\ell^*)^{-1} f \| \leq M\| f \|$.  This, combined with equation (\ref{blip3}), proves the proposition.
\end{proof}

\begin{prop}\label{sinclimitstuff}
\noindent The following are true:\\

\noindent 1) $\sup_{x \in \mathbb{R}} \sup_{\ell \in \mathbb{N}} \Big|\frac{\mathrm{sinc}(\pi x)}{Q_{1,\ell}(x)} \Big|=1.$\\

\noindent 2) Define $\Delta_{\ell,d} = \Big\{ n \in \mathbb{Z}^d \big| \Big\| \frac{t_n}{\ell+1} \Big\|_{\infty} < \frac{1}{\ell^{2/3}} \Big\}$ for $\ell \in \mathbb{N}$, then
\begin{eqnarray}
0 \leq 1-\frac{\mathrm{SINC}(\pi t_n)}{Q_{d,\ell} (t_n)} < 1-e^{\frac{-d(\ell+2)}{\ell^{4/3}-1}}, \quad n \in \Delta_{\ell,d}.
\end{eqnarray}
\end{prop}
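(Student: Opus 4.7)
The plan is to anchor both parts on the Euler product identity
\[
\frac{\mathrm{sinc}(\pi x)}{Q_{1,\ell}(x)} = \prod_{k=\ell+1}^{\infty}\!\left(1-\frac{x^{2}}{k^{2}}\right),
\]
obtained from $\sin(\pi x)/(\pi x)=\prod_{k=1}^{\infty}(1-x^{2}/k^{2})$ by peeling off the first $\ell$ factors, which are precisely $Q_{1,\ell}(x)$. Since the right-hand side equals $1$ at $x=0$ for every $\ell$, the supremum in (1) is automatically at least $1$.

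For part (1), I split $\mathbb{R}$ into two regimes. When $|x|\leq\ell+1$, each factor $1-x^{2}/k^{2}$ with $k\geq\ell+1$ lies in $[0,1]$, so the product does too. When $|x|>\ell+1$, by even-ness I may take $x>\ell+1$ and rewrite
\[
\left|\frac{\mathrm{sinc}(\pi x)}{Q_{1,\ell}(x)}\right| = \frac{(\ell!)^{2}\,|\sin(\pi x)|}{\pi\prod_{j=-\ell}^{\ell}(x+j)},
\]
using $Q_{1,\ell}(x)=(\ell!)^{-2}\prod_{k=1}^{\ell}(k^{2}-x^{2})$ together with the identity $x\prod_{k=1}^{\ell}(x-k)(x+k)=\prod_{j=-\ell}^{\ell}(x+j)$. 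The denominator is strictly increasing for $x>\ell$ and equals $(2\ell+1)!\geq(\ell!)^{2}$ at $x=\ell+1$, so the ratio is strictly less than $1/\pi<1$. Combined with the value $1$ at $x=0$, the supremum equals $1$.

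For part (2), the hypothesis $n\in\Delta_{\ell,d}$ forces $|t_{n}(j)|<(\ell+1)/\ell^{2/3}\leq\ell+1$ for each coordinate, so every factor of $\mathrm{SINC}(\pi t_{n})/Q_{d,\ell}(t_{n})=\prod_{j=1}^{d}\mathrm{sinc}(\pi t_{n}(j))/Q_{1,\ell}(t_{n}(j))$ lies in $(0,1]$ by the first regime of part (1); this is the left inequality. For the right one I take $-\log$ and work coordinate-wise. With $y:=t_{n}(j)^{2}/k^{2}<1/\ell^{4/3}$ for $k\geq\ell+1$, the elementary estimate $-\log(1-y)\leq y/(1-y)\leq y\,\ell^{4/3}/(\ell^{4/3}-1)$, the telescoping comparison $\sum_{k=\ell+1}^{\infty}k^{-2}<1/(\ell+\tfrac12)=2/(2\ell+1)$, and the bound $t_{n}(j)^{2}<(\ell+1)^{2}/\ell^{4/3}$ combine to give
\[
-\log\frac{\mathrm{sinc}(\pi t_{n}(j))}{Q_{1,\ell}(t_{n}(j))} < \frac{2(\ell+1)^{2}}{(2\ell+1)(\ell^{4/3}-1)} \leq \frac{\ell+2}{\ell^{4/3}-1},
\]
where the last step reduces to $2(\ell+1)^{2}\leq(\ell+2)(2\ell+1)$, i.e., $4\ell\leq 5\ell$. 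Summing over the $d$ coordinates and exponentiating yields the stated strict upper bound.

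The main obstacle is the second regime of part (1): the tail product $\prod_{k\geq\ell+1}(1-x^{2}/k^{2})$ has mixed-sign factors once $|x|>\ell+1$, and the algebraic identity above is what converts $|\mathrm{sinc}/Q_{1,\ell}|$ into a transparent ratio bounded by $1/\pi$. Part (2) is quantitative bookkeeping, but picking the sharper sum-integral comparison $\sum k^{-2}<1/(\ell+\tfrac12)$ (rather than the cruder $1/\ell$) is what makes the constant $\ell+2$ emerge exactly.
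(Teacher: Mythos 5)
Your proof is correct. Part (1) is essentially the paper's argument: split at $|x|=\ell+1$, use the Euler product for the inner regime, and a factorial estimate for the outer regime; your rewriting as $(\ell!)^2|\sin\pi x|/\bigl(\pi\prod_{j=-\ell}^{\ell}(x+j)\bigr)$ is just a cosmetic rearrangement of the paper's bound $\frac{(\ell!)^2}{\pi(2\ell+1)!}$.

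For part (2) you take a genuinely shorter route to the same constant. The paper expands $-\log\bigl(\mathrm{sinc}(\pi t)/Q_{1,\ell}(t)\bigr)$ as the double power series $\sum_{j\ge 1}\frac1j\bigl(\sum_{k\ge\ell+1}k^{-2j}\bigr)t^{2j}$ and then applies a per-$j$ sum-to-integral comparison $\sum_{k>\ell}k^{-2j}<(\ell+1)^{-2j}+\tfrac{1}{(2j-1)}(\ell+1)^{-(2j-1)}$ before summing the resulting geometric series. You instead use the single elementary inequality $-\log(1-y)\le y/(1-y)$ together with the telescoping bound $\sum_{k\ge\ell+1}k^{-2}<(\ell+\tfrac12)^{-1}$, which collapses the double sum to one line. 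Both roads arrive at exactly $\frac{\ell+2}{\ell^{4/3}-1}$, but yours is more economical; the paper's more elaborate series identity (its equation for $-\log(\mathrm{sinc}/Q_{1,\ell})$) is also put to work later in the proof of Lemma \ref{techlem1}, which likely explains why the author develops it in full here rather than using the shortcut you found.
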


\begin{proof}[Proof of Proposition \ref{sinclimitstuff}]
\noindent For 1), the identity $$\mathrm{sinc}(\pi t) = \prod_{k=1}^\infty \Big( 1-\frac{t^2}{k^2} \Big)$$ implies
\begin{equation}
\frac{\mathrm{sinc}(\pi t)}{Q_{1,\ell} (t)} = \prod_{k=\ell+1}^\infty \Big(1-\frac{t^2}{k^2} \Big),
\end{equation}
where convergence is uniform on compact subsets of $\mathbb{C}$.  Fix $\ell \in \mathbb{N}$.  If $t \in [0,\ell+1]$, then $\Big|\frac{\mathrm{sinc}(\pi t)}{Q_{1,\ell}(t)} \Big| \leq1.$  Note that $|Q_{1,\ell} (t)| = \prod_{k=1}^\ell \Big(\frac{t^2}{k^2} -1 \Big)$ is increasing on $(\ell+1, \infty)$.  If $t \in (\ell+1, \infty)$, then
\begin{equation}
\Big|\frac{\mathrm{sinc}(\pi t)}{Q_{1,\ell}(t)} \Big| = \Big|\frac{\sin(\pi t)}{\pi t Q_{1,\ell}(t)} \Big| < \frac{1}{\pi(\ell+1)| Q_{1,\ell} (\ell+1)|}.\nonumber
\end{equation}
Computation yields
\begin{equation}
| Q_{1,\ell} (\ell+1)| = \frac{(2\ell+1)!}{\ell! (\ell+1)!},\nonumber
\end{equation}
so $$ \Big|\frac{\mathrm{sinc}(\pi t)}{Q_{1,\ell}(t)} \Big| < \frac{(\ell!)^2}{\pi(2\ell+1)!} <1 .$$
Observing that $\frac{\mathrm{sinc}(\pi t)}{Q_{1,\ell}(t)}$ is even proves 1).\\

\noindent For 2), let $t \in \mathbb{R}$ such that $\Big| \frac{t}{\ell+1} \Big| < \frac{1}{\ell^{2/3}}$, then $0 < \frac{\mathrm{sinc}(\pi t)}{Q_{1,\ell}(t)}$, and
\begin{eqnarray}\label{logstuff}
-\log\Big( \frac{\mathrm{sinc}(\pi t)}{Q_{1,\ell}(t)} \Big) &  = & -\sum_{k= \ell+1}^\infty \log\Big(1- \frac{t^2}{k^2} \Big) = \sum_{k= \ell+1}^\infty \sum_{j=1}^\infty \frac{t^2}{j k^{2j}}\\
& = & \sum_{j=1}^\infty \frac{1}{j} \Big(\sum_{k= \ell+1}^\infty \frac{1}{k^{2j}} \Big) t^{2j}.\nonumber
\end{eqnarray}
Basic calculus shows that $$\sum_{k= \ell+1}^\infty \frac{1}{k^{2j}} <\frac{1}{(\ell+1)^{2j}}+\frac{1}{(2j-1)(\ell+1)^{2j-1}}.$$  Equality (\ref{logstuff}) implies
\begin{eqnarray}\label{morelogstuff}
-\log\Big( \frac{\mathrm{sinc}(\pi t)}{Q_{1,\ell}(t)} \Big) & < &\sum_{j=1}^\infty \frac{1}{j}\Big(\frac{t}{\ell+1}\Big)^{2j}+  (\ell+1)\sum_{j=1}^\infty \frac{1}{j(2j-1)}\Big(\frac{t}{\ell+1}\Big)^{2j}\\
& < & (\ell+2)\sum_{j=1}^\infty \Big(\frac{t}{\ell+1}\Big)^{2j} < \frac{\ell+2}{\ell^{4/3}-1}.\nonumber
\end{eqnarray}
If $n \in \Delta_{\ell,d}$, then for each $1 \leq k \leq d$, $\Big| \frac{t_n (k)}{\ell+1} \Big| >\frac{\ell+2}{\ell^{4/3}-1}$, so that
\begin{equation}
\log \Big(\frac{\mathrm{SINC}(\pi t_n)}{Q_{d,\ell}(t_n)} \Big)  = \sum_{k=1}^d \log \Big( \frac{\mathrm{sinc}(\pi t_n(k))}{Q_{1,\ell}(t_n(k))} \Big) > -\frac{d(\ell+2)}{\ell^{4/3}-1}.\nonumber
\end{equation}
Statement 2) of Proposition \ref{sinclimitstuff} follows readily.
\end{proof}

\begin{proof}[Proof of statement (c) in Theorem \ref{maintheorem}]
Proposition \ref{sinclimitstuff} gives the following:\\
\begin{eqnarray}
S_{\ell,d} & \leq & \Big(\sum_{n \in \Delta_{\ell,d}} + \sum_{n \in \mathbb{Z}^d \setminus\Delta_{\ell,d}}\Big) |f(t_n)|^2\bigg[1- \frac{\mathrm{SINC} \pi t_n}{Q_{d,\ell} (t_n)} \bigg]^2 \nonumber\\
& \leq &  \bigg( 1-e^{\frac{-d(\ell+2)}{\ell^{4/3}-1}} \bigg)^2 \sum_{n \in \mathbb{Z}^d} |f(t_n)|^2 + \sum_{n:\ \frac{\ell+1}{\ell^{2/3}}\leq \| t_n \|_\infty} 4 | f(t_n) |^2.
\end{eqnarray}
Now $(f(t_n))_{n \in \mathbb{Z}^d} \in \ell_2(\mathbb{Z}^d)$ implies that $\lim_{\ell \rightarrow \infty} S_{\ell,d} =0$, so that by Proposition \ref{blip2}, statement (c) in Theorem \ref{maintheorem} is true.
\end{proof}

\section{The second main result}\label{S:4}

\noindent Theorem \ref{maintheorem} can be simplified.  The function $$t \mapsto \frac{\mathrm{SINC} (\pi t)}{Q_{d,\ell} (t)}$$ becomes more computationally complex for large values of $\ell$.  If, at the cost of global $L_2$ and uniform convergence, we adopt an approximation 
\begin{equation}
\mathrm{SINC} (\pi t) \simeq Q_{d,\ell} (t) e^{-\sum_{k=1}^{N} \frac{1}{k(2k-1)}\frac{\| t \|_{2k}^{2k}}{(\ell+1/2)^{2k-1}}},
\end{equation}
we bypass this difficulty as the exponent of the above quantity is simply a rational function of $\ell >0$.  This is precisely quantified in Theorem \ref{simplethm}, which is the second main result of this paper.

\begin{thm}\label{simplethm}
Let $(t_n)_{\mathbb{Z}^d} \subset \mathbb{R}^d$ be a sequence such that the associated exponential functions are a uniformly invertible Riesz basis for $L_2([-\pi,\pi]^d)$.  For $N \in \{0,1,2,\ldots \}$, $A>0$, let $E_{\ell,N,A} = [-A (\ell+1/2)^\frac{2N+1}{2N+2},A (\ell+1/2)^\frac{2N+1}{2N+2}]$.  Let $f \in PW_{[-\pi,\pi]^d}$ where $(\Psi_\ell)_\ell$ is the sequence of interpolating polynomials from Theorem \ref{maintheorem}, and let $N \geq 0$.  Let $f \in PW_{[-\pi,\pi]^d}$, and define $$I_{f,\ell} (t) = \Psi_\ell(t) e^{-\sum_{k=1}^{N} \frac{1}{k(2k-1)}\frac{\| t \|_{2k}^{2k}}{(\ell+1/2)^{2k-1}}},$$ then
\begin{equation}\label{simplecor2}
\lim_{\ell \rightarrow \infty} \Big\| f(t) - I_{f,\ell}(t) \Big\|_{L_2((E_{\ell,N,A})^d)}=0,
\end{equation}
and
\begin{equation}\label{simplecor1}
\lim_{\ell \rightarrow \infty} \Big\| f(t) - I_{f,\ell} (t) \Big\|_{L_\infty((E_{\ell,N,A})^d)}=0.
\end{equation}
\end{thm}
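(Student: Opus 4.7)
My plan is to reduce to Theorem~\ref{maintheorem} through an algebraic identity, then control a logarithm-type correction. Since $(2N+1)/(2N+2) < 1$, for every $A > 0$ we have $A(\ell+1/2)^{(2N+1)/(2N+2)} < \ell$ for all large $\ell$, so $\mathrm{SINC}(\pi t)/Q_{d,\ell}(t)$ (whose zero set on $\mathbb{R}^d$ is $(\mathbb{Z}\setminus\{-\ell,\ldots,\ell\})^d$) is nonzero on $(E_{\ell,N,A})^d$. Setting $\vp_\ell(t) := f(t) - \Psi_\ell(t)\,\mathrm{SINC}(\pi t)/Q_{d,\ell}(t)$, Theorem~\ref{maintheorem} gives $\vp_\ell \to 0$ uniformly and in $L_2(\mathbb{R}^d)$. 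Introduce $T_\ell(t) := -\log[\mathrm{SINC}(\pi t)/Q_{d,\ell}(t)]$ and $B_{\ell,N}(t) := \sum_{k=1}^N \|t\|_{2k}^{2k}/[k(2k-1)(\ell+1/2)^{2k-1}]$, the latter being the exponent appearing in $I_{f,\ell}$. Solving the definition of $\vp_\ell$ for $\Psi_\ell$ on $(E_{\ell,N,A})^d$ and substituting into $I_{f,\ell} = \Psi_\ell\, e^{-B_{\ell,N}}$ produces the clean identity
\begin{equation}
f(t) - I_{f,\ell}(t) = f(t)\bigl[1 - e^{T_\ell(t) - B_{\ell,N}(t)}\bigr] + \vp_\ell(t)\, e^{T_\ell(t) - B_{\ell,N}(t)}.\nonumber
\end{equation}

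The heart of the argument is a sharp comparison of $T_\ell$ with $B_{\ell,N}$ on the growing cube. The power series expansion used in the proof of Proposition~\ref{sinclimitstuff} yields $T_\ell(t) = \sum_{j=1}^\infty a_{\ell,j}\,\|t\|_{2j}^{2j}$ with $a_{\ell,j} = (1/j)\sum_{m=\ell+1}^\infty m^{-2j}$, while the coefficients $b_{\ell,k} := 1/[k(2k-1)(\ell+1/2)^{2k-1}]$ in $B_{\ell,N}$ are exactly the integral approximants $(1/k)\int_{\ell+1/2}^\infty x^{-2k}\,dx$ to $a_{\ell,k}$. Convexity of $x^{-2k}$ and the midpoint rule then give $0 < b_{\ell,k} - a_{\ell,k} = O((\ell+1/2)^{-2k-1})$. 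Writing $\rho_\ell := A(\ell+1/2)^{-1/(2N+2)}$, which tends to $0$, and plugging $\|t\|_\infty \leq A(\ell+1/2)^{(2N+1)/(2N+2)}$ into each piece shows that the finite correction $\sum_{k=1}^N (b_{\ell,k}-a_{\ell,k})\|t\|_{2k}^{2k}$ is $o(1)$ as $\ell\to\infty$ uniformly on $(E_{\ell,N,A})^d$, and that the tail obeys $\sum_{k=N+1}^\infty a_{\ell,k}\|t\|_{2k}^{2k} \leq d(\ell+1/2)\sum_{k=N+1}^\infty \rho_\ell^{2k}/[k(2k-1)] = O(dA^{2(N+1)})$ uniformly in $t$. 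Therefore $|T_\ell - B_{\ell,N}|$ is uniformly bounded on $(E_{\ell,N,A})^d$ for all large $\ell$, while for every fixed $t$ (and uniformly on any fixed compact set) $T_\ell(t) - B_{\ell,N}(t) \to 0$.

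With $M := \sup_\ell \sup_{t\in(E_{\ell,N,A})^d} e^{|T_\ell(t) - B_{\ell,N}(t)|} < \infty$, the remainder of the proof is routine. The $L_2$ and $L_\infty$ norms of $\vp_\ell\, e^{T_\ell - B_{\ell,N}}$ on $(E_{\ell,N,A})^d$ are dominated by $M\|\vp_\ell\|_{L_2(\mathbb{R}^d)}$ and $M\|\vp_\ell\|_{L_\infty(\mathbb{R}^d)}$ respectively, both of which tend to $0$ by Theorem~\ref{maintheorem}. For the term $f[1 - e^{T_\ell - B_{\ell,N}}]$, the factor $|1 - e^{T_\ell - B_{\ell,N}}|$ is uniformly bounded by $M+1$ on $(E_{\ell,N,A})^d$ and tends to $0$ pointwise; since $f \in L_2(\mathbb{R}^d)$, equation (\ref{simplecor2}) follows by dominated convergence against the dominant $(M+1)^2|f|^2$. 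For (\ref{simplecor1}), given $\delta > 0$ use fact~6 of Section~\ref{S:1} to pick $R$ with $|f(t)| < \delta$ for $\|t\|_\infty > R$; then the uniform convergence of $T_\ell - B_{\ell,N}$ to $0$ on the compact set $\{\|t\|_\infty \leq R\}$ makes $|1 - e^{T_\ell - B_{\ell,N}}|$ eventually smaller than $\delta/(1+\|f\|_\infty)$ there.

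The main obstacle is that $T_\ell - B_{\ell,N}$ does \emph{not} tend to $0$ uniformly on $(E_{\ell,N,A})^d$: the leading $k=N+1$ tail term contributes an $\Omega(A^{2(N+1)})$ quantity that persists in the limit, which is precisely what forces the cube's half-width to be at most of order $(\ell+1/2)^{(2N+1)/(2N+2)}$ and why global convergence on $\mathbb{R}^d$ is lost compared with Theorem~\ref{maintheorem}. The escape is that we only need uniform \emph{boundedness} of $T_\ell - B_{\ell,N}$, while the required decay at infinity is supplied by $f$ itself through $f \in L_2(\mathbb{R}^d)$ and $f(t) \to 0$ as $\|t\|_\infty \to \infty$.
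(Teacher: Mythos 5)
Your proof is correct, and while it uses the same core inputs as the paper (the power-series expansion of $-\log(\mathrm{sinc}/Q_{1,\ell})$ and the midpoint-rule comparison of $\sum_{m>\ell} m^{-2k}$ with $\int_{\ell+1/2}^\infty x^{-2k}\,dx$, which is Proposition~\ref{blip5}), it organizes them differently from the paper. The paper isolates the ``middle term'' $e^{\|t\|_{2(N+1)}^{2(N+1)}/[(\ell+1/2)^{2N+1}(N+1)(2N+1)]}\frac{\Psi_\ell}{Q_{d,\ell}}\mathrm{SINC}(\pi t)$ and compares both $I_{f,\ell}$ and $f$ against it via two quantitative lemmas: Lemma~\ref{techlem1} bounds the discrepancy between $Q_{d,\ell}e^{-B_{\ell,N}}$ and the middle term at the explicit rate $O((\ell+1/2)^{-1/(N+1)})|\mathrm{SINC}(\pi t)|$, and Lemma~\ref{techlem2} handles the ``$e^{\cdot}-1$ times $f$'' factor by a compactness/subsequence argument. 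You instead solve for $\Psi_\ell$ on $(E_{\ell,N,A})^d$ (where $\mathrm{SINC}(\pi\cdot)/Q_{d,\ell}>0$ for large $\ell$) to obtain the exact identity $f-I_{f,\ell}=f[1-e^{T_\ell-B_{\ell,N}}]+\vp_\ell e^{T_\ell-B_{\ell,N}}$, then establish two qualitative facts about $T_\ell-B_{\ell,N}$ on the growing cube — uniform boundedness (the tail from $k>N$ yields $O(dA^{2(N+1)})$ rather than $o(1)$, which is precisely what caps the cube's growth) and convergence to $0$ on compacta — and finish with dominated convergence and the vanishing of $f$ at infinity. Your route is arguably cleaner and more self-contained, since it never needs the explicit middle exponential; the paper's more quantitative route, on the other hand, produces the rate $O((\ell+1/2)^{-1/(N+1)})$ that is subsequently exploited in the Appendix (Propositions~\ref{auxprop2},~\ref{blip6}) to discuss optimality of the cube growth. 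Two minor cleanups: the supremum $M$ should be taken over $\ell$ large enough that $(E_{\ell,N,A})^d$ lies inside the positivity region of $\mathrm{SINC}(\pi\cdot)/Q_{d,\ell}$ (for small $\ell$ the definition of $T_\ell$ can fail on the cube, though this is irrelevant for $\ell\to\infty$); and to apply dominated convergence you should state the integrand as $|f|^2|1-e^{T_\ell-B_{\ell,N}}|^2\mathbf{1}_{(E_{\ell,N,A})^d}$ extended by zero off the cube, which then converges to $0$ pointwise on $\mathbb{R}^d$ since the cubes exhaust $\mathbb{R}^d$.
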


\noindent If $N=0$ in Theorem \ref{simplethm}, we have the following extension of Corollary \ref{lyuseippavcor} to arbitrary multivariate bandlimited functions (at the expense of introducing uniform invertibility):
\begin{cor}\label{simplecormod}
For all $f \in PW_{[-\pi,\pi]^d}$, we have
\begin{equation}
\lim_{\ell \rightarrow \infty} \Big\| f(t) - \Psi_\ell(t)\Big\|_{L_2([-A(\ell+1/2)^{1/2},A(\ell+1/2)^{1/2}]^d)}=0,
\end{equation}
and
\begin{equation}\label{simplecor1mod}
\lim_{\ell \rightarrow \infty} \Big\| f(t) - \Psi_\ell(t)\Big\|_{L_\infty([-A(\ell+1/2)^{1/2},A(\ell+1/2)^{1/2}]^d)}=0.
\end{equation}
\end{cor}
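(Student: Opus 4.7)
The plan is to obtain Corollary \ref{simplecormod} as the direct $N=0$ specialization of Theorem \ref{simplethm}, which the statement itself invites by writing ``If $N=0$ in Theorem \ref{simplethm}.'' Since we are allowed to assume Theorem \ref{simplethm}, the argument reduces to two bookkeeping observations, after which the conclusions follow by substitution.

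First I would verify the definition of $I_{f,\ell}$ at $N=0$. With $N=0$ the sum $\sum_{k=1}^{N}\frac{1}{k(2k-1)}\frac{\|t\|_{2k}^{2k}}{(\ell+1/2)^{2k-1}}$ is empty under the standard convention that a sum with no terms equals $0$, so the exponential damping factor in the definition of $I_{f,\ell}$ reduces to $e^{0}=1$. Hence $I_{f,\ell}(t)=\Psi_{\ell}(t)$ identically. Second I would check the set $E_{\ell,N,A}$ at $N=0$: the exponent $\frac{2N+1}{2N+2}$ specializes to $\tfrac12$, so $E_{\ell,0,A}=[-A(\ell+1/2)^{1/2},A(\ell+1/2)^{1/2}]$, which matches the cube in the corollary verbatim. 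With these two identifications in place, the $L_2$ and $L_\infty$ conclusions (\ref{simplecor2}) and (\ref{simplecor1}) of Theorem \ref{simplethm}, applied with $N=0$, are exactly the two displayed limits of Corollary \ref{simplecormod}.

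There is therefore no genuine obstacle in the proof of the corollary itself; the substantive work is concentrated in Theorem \ref{simplethm}, whose role is to justify why the unmodified polynomial interpolants $\Psi_\ell$ track $f$ in the strong $L_2$ and uniform senses on the expanding cubes of side $\sim (\ell+1/2)^{1/2}$. What makes the $N=0$ case conceptually striking, and worth isolating as the multivariate analogue of Corollary \ref{lyuseippavcor}, is precisely that no exponential correction is needed on cubes of that size, a feature ultimately traceable to the $\ell_2$ decay of the samples $(f(t_n))_{n\in\mathbb{Z}^d}$ and the growth control on $\mathrm{SINC}(\pi t)/Q_{d,\ell}(t)$ established in Proposition \ref{sinclimitstuff}.
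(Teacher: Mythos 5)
Your proposal is correct and matches the paper's own (implicit) argument exactly: the paper simply observes that Corollary \ref{simplecormod} is the $N=0$ specialization of Theorem \ref{simplethm}, and your two bookkeeping checks (the empty sum makes $I_{f,\ell}=\Psi_\ell$, and $\frac{2N+1}{2N+2}\big|_{N=0}=\tfrac12$ gives the stated cubes) are precisely what is needed to make that specialization rigorous.
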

\noindent It is evident that if $(t_n)_{n \in \mathbb{Z}^d}\subset \mathbb{R}^d$ is any subset such that the associated sequence of exponentials $(f_n)_{n \in \mathbb{Z}^d}$ is a Riesz basis for $L_2([-\pi,\pi]^d)$, then the map $f \mapsto (f(t_n))_{n \in \mathbb{Z}^d}$ is a bijection from $PW_{[-\pi,\pi]^d}$ to $\ell_2(\mathbb{Z}^d)$.  This allows for a nice interpretation of Corollary \ref{simplecormod}.  Given a sequence $(t_n)_{n \in \mathbb{Z}^d}\subset \mathbb{R}^d$ (subject to the hypotheses of Theorem \ref{simplethm}), and sampled data $\big( (t_n,c_n) \big)_{n \in \mathbb{Z}^d}$ where $(c_n)_{n \in \mathbb{Z}^d} \in \ell_2(\mathbb{Z}^d)$, then a unique sequence of Lagrangian polynomial interpolants exists, and in the appropriate limit, converges to the unique band-limited interpolant of the same data.\\

\noindent When $N=1$, we have a sampling theorem with a Gaussian multiplier: $$ f(t) \simeq \Psi_\ell (t) e^{-\frac{\| t\|_2^2}{(\ell+1/2)}}, \quad f \in PW_{[-\pi,\pi]^d}.$$  Compare Theorem \ref{simplethm} with Theorem 2.6 in \cite{BSS}, which is a multivariate sampling theorem with a Gaussian multipler with global $L_2$ and uniform convergence.  Also compare Theorem \ref{simplethm} with Theorem 2.1 in \cite{SS}, which, when $d=1$ and the data nodes are equally spaced, gives another recovery formula involving a Gaussian mulitplier in the context of over-sampling.\\

\noindent The proof of Theorem \ref{simplethm} relies on two lemmas:

\begin{lem}\label{techlem1}
Let $d \in \mathbb{N}$, $N \in \{0,1,2,\ldots \}$, and $A > 0$.  There exists $M >0$ such that for sufficiently large $\ell$, and any $t \in (E_{\ell,N,A})^d$, we have
\begin{eqnarray}
& &\Big|Q_{d,\ell}(t)  e^{-\sum_{k=1}^{N} \frac{1}{k(2k-1)}\frac{\| t \|_{2k}^{2k}}{(\ell+1/2)^{2k-1}}} - e^\frac{\| t\|_{2(N+1)}^{2(N+1)}}{(\ell+1/2)^{2N+1}(N+1)(2N+1)} \mathrm{SINC}(\pi t)\Big|\nonumber\\
& &\quad\leq M(\ell+1/2)^{-\frac{1}{N+1}}|\mathrm{SINC}(\pi t)|.\nonumber
\end{eqnarray}
\end{lem}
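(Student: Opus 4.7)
The plan is to pass to logarithms, expand $\log(Q_{d,\ell}(t)/\mathrm{SINC}(\pi t))$ as a double series in $t$, and identify the pieces that exactly match the exponents $S_N(t) := \sum_{k=1}^{N}\|t\|_{2k}^{2k}/(k(2k-1)(\ell+1/2)^{2k-1})$ and $T_{N+1}(t) := \|t\|_{2(N+1)}^{2(N+1)}/((N+1)(2N+1)(\ell+1/2)^{2N+1})$ appearing in the statement. For $\ell$ sufficiently large, the cube $(E_{\ell,N,A})^d$ lies in $(-\ell-1,\ell+1)^d$ (the defining exponent $(2N+1)/(2N+2)$ is strictly less than $1$), so every common zero of $\mathrm{SINC}(\pi\cdot)$ and $Q_{d,\ell}(\cdot)$ in the cube cancels and the ratio extends to a positive continuous function there. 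Using the product formula $\mathrm{sinc}(\pi z) = \prod_{k=1}^\infty(1 - z^2/k^2)$ and the tensor structure of $\mathrm{SINC}$, this gives
\begin{equation}
\log\frac{Q_{d,\ell}(t)}{\mathrm{SINC}(\pi t)} = \sum_{j=1}^\infty \frac{\alpha_j(\ell)}{j}\,\|t\|_{2j}^{2j}, \qquad \alpha_j(\ell) := \sum_{k=\ell+1}^\infty k^{-2j},\nonumber
\end{equation}
the same expansion already exploited in Proposition~\ref{sinclimitstuff}.

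The next ingredient is the sharp midpoint-type asymptotic
\begin{equation}
\alpha_j(\ell) = \frac{1}{(2j-1)(\ell+1/2)^{2j-1}} + \gamma_j(\ell), \qquad |\gamma_j(\ell)| \leq \frac{C j^2}{\ell^{2j+1}},\nonumber
\end{equation}
which I would verify by comparing $\sum_{k\geq\ell+1}k^{-2j}$ with $\int_{\ell+1/2}^\infty x^{-2j}\,dx$: the integers $k$ are precisely the midpoints of the unit intervals $[k-1/2,k+1/2]$, so the midpoint rule yields the claimed leading term with second-derivative error of order $j^2/\ell^{2j+1}$. This is what forces the centering $\ell+1/2$ rather than $\ell$ or $\ell+1$ in the statement. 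Substituting and sorting, the $j \leq N$ contributions from the leading part of $\alpha_j$ reconstruct $S_N(t)$, the $j=N+1$ leading part reconstructs $T_{N+1}(t)$, and the remainder is
\begin{equation}
E(t,\ell) := \sum_{j=1}^{N+1}\frac{\gamma_j(\ell)}{j}\|t\|_{2j}^{2j} + \sum_{j=N+2}^\infty \frac{\alpha_j(\ell)}{j}\|t\|_{2j}^{2j}.\nonumber
\end{equation}

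The core of the argument is the uniform estimate $|E(t,\ell)| = O((\ell+1/2)^{-1/(N+1)})$ on $(E_{\ell,N,A})^d$. Using the crude bound $\|t\|_{2j}^{2j} \leq d A^{2j}(\ell+1/2)^{j(2N+1)/(N+1)}$, the $\gamma$-part contributes $O((\ell+1/2)^{-1-1/(N+1)})$ (dominated by $j=1$), while each tail term has size proportional to $A^{2j}(\ell+1/2)^{1-j/(N+1)}$; the tail is thus controlled by its $j=N+2$ entry $O((\ell+1/2)^{-1/(N+1)})$ times a convergent geometric series in the small parameter $A^{2}(\ell+1/2)^{-1/(N+1)}$. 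The exponent defining $E_{\ell,N,A}$ is exactly calibrated for this balance. Once $E$ is bounded, I would observe that $T_{N+1}(t)$ is uniformly bounded on $(E_{\ell,N,A})^d$ by $dA^{2(N+1)}/((N+1)(2N+1))$, invoke the elementary inequality $|e^{T_{N+1}+E}-e^{T_{N+1}}| \leq e^{T_{N+1}+|E|}|E|$, and multiply through by $|\mathrm{SINC}(\pi t)|$ to conclude, extending by continuity at the common lattice zeros. The main obstacle is the double-indexed bookkeeping: one must track how the $j$-dependence of $\gamma_j$ and of the tail coefficients interacts with the $j$-dependent polynomial growth of $\|t\|_{2j}^{2j}$, and verify that both error sources are simultaneously of order $(\ell+1/2)^{-1/(N+1)}$, precisely the rate the region $E_{\ell,N,A}$ is designed to support.
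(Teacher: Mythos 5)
Your proposal is correct and follows essentially the same route as the paper: pass to logarithms via the product formula for $\mathrm{sinc}$, use the midpoint-rule estimate comparing $\sum_{k\geq\ell+1} k^{-2j}$ with $\int_{\ell+1/2}^\infty x^{-2j}\,dx$ (this is exactly the paper's Proposition~\ref{blip5}) to extract the leading term $1/((2j-1)(\ell+1/2)^{2j-1})$, collect the $j\leq N$ pieces into $S_N$ and the $j=N+1$ piece into $T_{N+1}$, and show the residual is $O((\ell+1/2)^{-1/(N+1)})$ on $(E_{\ell,N,A})^d$ via the scaling $t=c(\ell+1/2)^{(2N+1)/(2N+2)}$, $c\in[-A,A]^d$. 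The paper organizes this as a two-sided sandwich on $Q_{d,\ell}(t)e^{-S_N}/\mathrm{SINC}(\pi t)$ (inequalities~(\ref{blork3})--(\ref{blork5})), whereas you explicitly isolate an additive error $E(t,\ell)$ in the exponent and apply $|e^{T+E}-e^T|\leq e^{T+|E|}|E|$, which is slightly cleaner bookkeeping but the same mathematics. Your error bound $|\gamma_j|\leq Cj^2/\ell^{2j+1}$ is a touch weaker than the $|\gamma_j|\leq j/(2(\ell+1/2)^{2j+1})$ that the midpoint estimate actually gives, but since the $\gamma$-sum is finite ($j\leq N+1$) this makes no difference.
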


\begin{lem}\label{techlem2}
For all $f \in PW_{[-\pi,\pi]^d}$ and $N \in \{0,1,2,\ldots \}$, we have
\begin{equation}
\lim_{\ell \rightarrow \infty} \sup_{t \in (E_{\ell,N,A})^d} \Big| \Big(e^\frac{\| t\|_{2(N+1)}^{2(N+1)}}{(\ell+1/2)^{2N+1}(N+1)(2N+1)}  -1 \Big)f(t) \Big| = 0.\nonumber
\end{equation}
\end{lem}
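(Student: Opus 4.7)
The plan is to split the region $(E_{\ell,N,A})^d$ into a bounded core (where the exponential factor tends uniformly to $1$) and an exterior (where $f$ is small), and to exploit the interplay between the decay of $f$ and the uniform boundedness of the exponent.

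First, I would observe that for every $t \in (E_{\ell,N,A})^d$, each coordinate satisfies $|t(k)| \le A(\ell+1/2)^{(2N+1)/(2N+2)}$, hence
\begin{equation}
\| t\|_{2(N+1)}^{2(N+1)} \;=\; \sum_{k=1}^{d} |t(k)|^{2(N+1)} \;\le\; d\,A^{2(N+1)}(\ell+1/2)^{2N+1}. \nonumber
\end{equation}
Dividing by $(\ell+1/2)^{2N+1}(N+1)(2N+1)$ shows that the exponent appearing in the lemma is nonnegative and uniformly bounded by a constant $C = C(d,A,N)$ on $(E_{\ell,N,A})^d$, independently of $\ell$. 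Since $e^x - 1 \le x e^C$ for $x \in [0,C]$, this yields the pointwise estimate
\begin{equation}
0 \;\le\; e^{\frac{\| t\|_{2(N+1)}^{2(N+1)}}{(\ell+1/2)^{2N+1}(N+1)(2N+1)}} - 1 \;\le\; e^{C}\,\frac{\| t\|_{2(N+1)}^{2(N+1)}}{(\ell+1/2)^{2N+1}(N+1)(2N+1)}, \qquad t \in (E_{\ell,N,A})^d. \nonumber
\end{equation}

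Next, fix $\varepsilon > 0$. Recall from fact 6 of the preliminaries that $f \in PW_{[-\pi,\pi]^d}$ is bounded and satisfies $\lim_{\|t\|_\infty \to \infty} f(t) = 0$. Choose $R > 0$ large enough that $(e^{C}-1)\sup_{\|t\|_\infty > R} |f(t)| < \varepsilon/2$. On the exterior piece $\{t \in (E_{\ell,N,A})^d : \|t\|_\infty > R\}$, I bound the exponential factor by $e^C - 1$ and use this choice of $R$ to control $|f(t)|$; the product is then at most $\varepsilon/2$ uniformly in $\ell$.

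Finally, on the core $\{t \in (E_{\ell,N,A})^d : \|t\|_\infty \le R\}$, the displayed inequality above gives
\begin{equation}
\sup_{\|t\|_\infty \le R} \Big(e^{\frac{\| t\|_{2(N+1)}^{2(N+1)}}{(\ell+1/2)^{2N+1}(N+1)(2N+1)}} - 1\Big) \;\le\; e^{C}\,\frac{d\,R^{2(N+1)}}{(\ell+1/2)^{2N+1}(N+1)(2N+1)} \;\longrightarrow\; 0 \nonumber
\end{equation}
as $\ell \to \infty$, so multiplying by $\|f\|_\infty$ and taking $\ell$ large enough makes this contribution less than $\varepsilon/2$. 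Combining the two estimates gives the desired uniform limit. The only mildly delicate point is that the $R$-dependent and the $\ell$-dependent bounds must be chosen in the correct order, which is handled by fixing $R$ first on the basis of the decay of $f$ and then sending $\ell \to \infty$.
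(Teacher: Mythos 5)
Your proof is correct, and it takes a genuinely different route from the paper. The paper first rescales the supremum to a supremum over $c \in [-A,A]^d$ via $t = c(\ell+1/2)^{(2N+1)/(2N+2)}$ and then argues by contradiction: it extracts a subsequence where the supremum stays above $\epsilon$, observes that the corresponding values of $f$ must be bounded away from $0$, invokes the decay of $f$ at infinity to conclude that the maximizing points are bounded (hence the $c_{\ell_k}$ are of order $(\ell_k+1/2)^{-(2N+1)/(2N+2)}$), and then shows the exponential factor goes to $1$, a contradiction. Your argument is direct: you fix $\varepsilon > 0$, observe the exponent is uniformly bounded by $C = C(d,A,N)$ on $(E_{\ell,N,A})^d$ so that $e^x-1 \le xe^C$ applies, and decompose $(E_{\ell,N,A})^d$ into the core $\|t\|_\infty \le R$ (where the exponent is $O((\ell+1/2)^{-(2N+1)})$ and $f$ is merely bounded) and the tail $\|t\|_\infty > R$ (where the exponential factor is $\le e^C-1$ and $|f|$ is small by decay at infinity). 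Both arguments rest on the same two structural facts about $f \in PW_{[-\pi,\pi]^d}$ — boundedness and vanishing at infinity — but your direct $\varepsilon$-$R$ split avoids the contradiction and the subsequence extraction, and is arguably more transparent. The only thing to watch, which you correctly noted, is the order of quantifiers: $R$ is chosen first from the decay of $f$, independently of $\ell$, and only then is $\ell$ sent to infinity.
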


\noindent The proof of Lemma \ref{techlem1} relies on the following proposition.

\begin{prop}\label{blip5}
If $f:(0,\infty) \rightarrow (0,\infty)$ is convex, decreasing, differentiable, and integrable away from $0$, then
$$\frac{1}{4}f'(\ell+1/2)\leq \sum_{k=\ell+1}^\infty f(k) -\int_{\ell+1/2}^\infty f(x) dx \leq 0.$$
\end{prop}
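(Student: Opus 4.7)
The plan is to compare the tail sum with the tail integral interval by interval. Since $f$ is positive, decreasing, and integrable away from $0$, both $\sum_{k\ge\ell+1}f(k)$ and $\int_{\ell+1/2}^\infty f$ converge absolutely, and splitting the integral into unit intervals centered at each integer $k\ge\ell+1$ gives
\begin{equation}
\sum_{k=\ell+1}^\infty f(k)-\int_{\ell+1/2}^\infty f(x)\,dx=\sum_{k=\ell+1}^\infty a_k,\qquad a_k:=f(k)-\int_{k-1/2}^{k+1/2}f(x)\,dx.\nonumber
\end{equation}
The upper bound $\sum_k a_k\le 0$ is then termwise: convexity gives the tangent inequality $f(x)\ge f(k)+f'(k)(x-k)$, and integrating over $[k-1/2,k+1/2]$ kills the linear term by symmetry, yielding $a_k\le 0$ for every $k$.

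For the lower bound I would symmetrize and apply the fundamental theorem of calculus twice. Changing variable $x=k+u$ and folding $u\leftrightarrow -u$ yields $-a_k=\int_0^{1/2}[f(k+t)+f(k-t)-2f(k)]\,dt$. Since the integrand vanishes at $t=0$ and has $t$-derivative $f'(k+t)-f'(k-t)$, the fundamental theorem rewrites it as $\int_0^t[f'(k+s)-f'(k-s)]\,ds$. Fubini--Tonelli (valid because $f'$ is nondecreasing, making $f'(k+s)-f'(k-s)\ge 0$) then collapses the double integral to
\begin{equation}
-a_k=\int_0^{1/2}(1/2-s)\bigl[f'(k+s)-f'(k-s)\bigr]\,ds.\nonumber
\end{equation}

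The key observation that makes the series sum up cleanly is that convexity of $f$ makes $f'$ nondecreasing, so $f'(k+s)-f'(k-s)\le f'(k+1/2)-f'(k-1/2)$ for $s\in[0,1/2]$. Summing this inequality over $k\ge\ell+1$ telescopes to $-f'(\ell+1/2)$, and the crude bound $1/2-s\le 1/2$ yields
\begin{equation}
-\sum_{k=\ell+1}^\infty a_k\le \tfrac12\cdot\tfrac12\cdot\bigl(-f'(\ell+1/2)\bigr)=-\tfrac14 f'(\ell+1/2),\nonumber
\end{equation}
which rearranges to the stated lower bound. The main point to verify is that the telescoping genuinely reaches zero at infinity, i.e., $\lim_{x\to\infty}f'(x)=0$: this follows from the hypotheses, since $f'$ is increasing and bounded above by $0$ (as $f$ is decreasing), and if its limit were some $-c<0$ then $f'\le-c$ everywhere, forcing $f(x)\to -\infty$, contradicting positivity. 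The remaining interchanges of sum and integral are routine via Tonelli because every integrand involved carries a definite sign.
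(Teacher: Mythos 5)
Your proof is correct and complete. The key identity is the decomposition into midpoint-rule errors $a_k = f(k) - \int_{k-1/2}^{k+1/2} f$, and both halves go through cleanly: the upper bound $a_k \le 0$ is the standard fact that the midpoint rule underestimates for convex integrands (tangent line argument), and the lower bound comes from rewriting $-a_k = \int_0^{1/2}(1/2-s)\bigl[f'(k+s)-f'(k-s)\bigr]\,ds$, bounding by monotonicity of $f'$ and the crude estimate $1/2 - s \le 1/2$, and telescoping the resulting sum $\tfrac14\sum_{k\ge\ell+1}[f'(k+1/2)-f'(k-1/2)] = -\tfrac14 f'(\ell+1/2)$. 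Your verification that $f'(x)\to 0$ (from $f'$ nondecreasing, $f'\le 0$, and positivity of $f$) is exactly the point needed to make the telescoping close, and the Tonelli interchanges are justified since the integrands have fixed sign. The paper itself gives no argument here, dismissing the proposition with ``follows naturally from geometric considerations,'' so you have supplied a genuine proof where the author left only a hint; your approach is the natural analytic formalization of the geometric picture (area between the graph and the midpoint-value rectangle on each unit interval), and the constant $1/4$ appears exactly where one expects from $\int_0^{1/2}\tfrac12\,ds = \tfrac14$.
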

\begin{proof}[Proof of Proposition \ref{blip5}]
The proof follows naturally from geometric considerations.
\end{proof}

\begin{proof}[Proof of Lemma \ref{techlem1}]
Letting $|t| < \ell+1/2$ and recalling equation (\ref{logstuff}), we see that
\begin{eqnarray}\label{blork2}
& &-\log\Big(\frac{\mathrm{sinc}(\pi t)}{Q_{1,\ell} (t)}\Big)-\sum_{k=1}^{\infty} \frac{1}{k(2k-1)}\frac{t^{2k}}{(\ell+1/2)^{2k-1}}\\
& & \quad = \sum_{k=1}^\infty \Big[ \sum_{j=\ell+1}^\infty \frac{1}{j^{2k}} -\frac{1}{(2k-1)(\ell+1/2)^{2k-1}} \Big] \frac{t^{2k}}{k}.\nonumber
\end{eqnarray}
Applying Proposition \ref{blip5} to the function $f(t) = \frac{1}{t^{2k}}$ when $k \geq 1$, we obtain
\begin{equation}
\frac{-k}{2(\ell+1/2)^{2k+1}} \leq  \sum_{j=\ell+1}^\infty \frac{1}{j^{2k}} -\frac{1}{(2k-1)(\ell+1/2)^{2k-1}} \leq 0.\nonumber
\end{equation}
Equation (\ref{blork2}) becomes
\begin{eqnarray}
\frac{-1}{2(\ell+1/2)}\sum_{k=1}^{\infty} \Big(\frac{t}{\ell+1/2}\Big)^{2k} \leq -\log\Big(\frac{\mathrm{sinc}(\pi t)}{Q_{1,\ell} (t)}\Big)-\sum_{k=1}^{\infty} \frac{1}{k(2k-1)}\frac{t^{2k}}{(\ell+1/2)^{2k-1}} \leq 0.\nonumber
\end{eqnarray}
Restated,
\begin{eqnarray}
& &-\frac{1}{2(\ell+1/2)}\frac{\Big( \frac{t}{\ell+1/2} \Big)^2}{1-\Big(\frac{t}{\ell+1/2} \Big)^2}+\sum_{k=N+1}^\infty \frac{1}{k(2k-1)}\frac{t^{2k}}{(\ell+1/2)^{2k-1}}\\
& &\quad\leq -\log\Big(\frac{\mathrm{sinc}(\pi t)}{Q_{1,\ell} (t)}\Big)-\sum_{k=1}^{N} \frac{1}{k(2k-1)}\frac{t^{2k}}{(\ell+1/2)^{2k-1}}\nonumber\\
& &\quad\leq  \sum_{k=N+1}^\infty \frac{1}{k(2k-1)}\frac{t^{2k}}{(\ell+1/2)^{2k-1}}.\nonumber
\end{eqnarray}
Exponentiating,
\begin{eqnarray}\label{blork3}
& & e^{\bigg(-\frac{1}{2(\ell+1/2)}\frac{\big( \frac{t}{\ell+1/2} \big)^2}{1-\big(\frac{t}{\ell+1/2} \big)^2}\bigg)} e^{\sum_{k=N+1}^\infty \frac{1}{k(2k-1)}\frac{t^{2k}}{(\ell+1/2)^{2k-1}}}\\
& &\quad \leq\frac{Q_{1,\ell} (t)e^{-\sum_{k=1}^N \frac{1}{k(2k-1)}\frac{t^{2k}}{(\ell+1/2)^{2k-1}}}}{\mathrm{sinc}(\pi t)} \leq  e^{\sum_{k=N+1}^\infty \frac{1}{k(2k-1)}\frac{t^{2k}}{(\ell+1/2)^{2k-1}}}.\nonumber
\end{eqnarray}
Let $\ell$ be chosen large enough so that $A (\ell+1/2)^\frac{2N+1}{2N+2} <\ell+1/2$. If $\ell$ is large enough, then for any $t \in E_{\ell N,A}$, $t=c (\ell+1/2)^\frac{2N+1}{2N+2}$ for some $c \in [-A,A]$.  For such $t$, inequality (\ref{blork3}) implies
\begin{eqnarray}
& & e^{\bigg(-\frac{1}{2(\ell+1/2)^\frac{N+2}{N+1}}\frac{c^2}{1-c^2(\ell+1/2))^\frac{-1}{N+1}}\bigg)} e^{\sum_{k=N+1}^\infty \frac{c^{2k}}{k(2k-1)}(\ell+1/2)^{\big(1-\frac{k}{N+1}\big)}}\nonumber\\
& & \quad\leq \frac{Q_{1,\ell} (t)e^{-\sum_{k=1}^N \frac{1}{k(2k-1)}\frac{t^{2k}}{(\ell+1/2)^{2k-1}}}}{\mathrm{sinc}(\pi t)} \leq  e^{\sum_{k=N+1}^\infty \frac{c^{2k}}{k(2k-1)}(\ell+1/2)^{\big(1-\frac{k}{N+1}\big)}}.\nonumber
\end{eqnarray}
If $t \in (E_{\ell,N,A})^d$, then $t=c (\ell+1/2)^\frac{2N+1}{2N+2}$ for some $c \in [-A,A]^d$.  For any such $t$, we have
\begin{eqnarray}\label{blork4}
& & e^{\bigg(-\frac{d}{2(\ell+1/2)^\frac{N+2}{N+1}}\frac{A^2}{1-A^2(\ell+1/2))^\frac{-1}{N+1}}\bigg)} e^{\sum_{k=N+1}^\infty \frac{\|c\|_{2k}^{2k}}{k(2k-1)}(\ell+1/2)^{\big(1-\frac{k}{N+1}\big)}}\\
& & \quad\leq\frac{Q_{d,\ell} (t)e^{-\sum_{k=1}^N \frac{1}{k(2k-1)}\frac{\|t\|_{2k}^{2k}}{(\ell+1/2)^{2k-1}}}}{\mathrm{SINC}(\pi t)} \leq  e^{\sum_{k=N+1}^\infty \frac{\|c\|_{2k}^{2k}}{k(2k-1)}(\ell+1/2)^{\big(1-\frac{k}{N+1}\big)}}.\nonumber
\end{eqnarray}
On one hand,
\begin{eqnarray}\label{upperinequall}
e^{\sum_{k=N+1}^\infty \frac{\|c\|_{2k}^{2k}}{k(2k-1)}(\ell+1/2)^{\big(1-\frac{k}{N+1}\big)}} \leq e^{\Big( \frac{\|c\|_{2(N+1)}^{2(N+1)}}{(N+1)(2N+1)} + O\big( (\ell+1/2)^\frac{-1}{N+1} \big) \Big)}
\end{eqnarray}
where the ``big O'' constant is independent of $c \in [-A,A]^d$.
\noindent On the other hand,
\begin{eqnarray}\label{lowerinequal}
e^{\frac{\|c\|_{2(N+1)}^{2(N+1)}}{(N+1)(2N+1)}} \leq e^{\sum_{k=N+1}^\infty \frac{\|c\|_{2k}^{2k}}{k(2k-1)}(\ell+1/2)^{\big(1-\frac{k}{N+1}\big)}}.
\end{eqnarray}
Inequality (\ref{blork4}) yields
\begin{eqnarray}\label{blork5}
& & \bigg(e^{-\frac{d}{2(\ell+1/2)^\frac{N+2}{N+1}}\frac{A^2}{1-A^2(\ell+1/2))^\frac{-1}{N+1}}}-1\bigg)e^\frac{\|c\|_{2(N+1)}^{2(N+1)}}{(N+1)(2N+1)}\\
& &\quad\leq \frac{Q_{d,\ell} (t)e^{-\sum_{k=1}^N \frac{1}{k(2k-1)}\frac{\|t\|_{2k}^{2k}}{(\ell+1/2)^{2k-1}}}}{\mathrm{SINC}(\pi t)}-e^\frac{\|c\|_{2(N+1)}^{2(N+1)}}{(N+1)(2N+1)}\nonumber\\
& &\quad\leq e^\frac{d A^{2(N+1)}}{(N+1)(2N+1)} \Big(e^{O\bigg( \frac{1}{(\ell+1/2)^\frac{1}{N+1}}\bigg)}-1 \Big).\nonumber
\end{eqnarray}
The left most side of inequality (\ref{blork5}) is of the order $O((\ell+1/2)^{-\frac{N+2}{N+1}})$, and the right most side of inequality (\ref{blork5}) is of the order $O((\ell+1/2)^{-\frac{1}{N+1}})$, where the ``big O'' constants are independent of $c\in [-A,A]^d$.  The lemma follows readily.
\end{proof}

\begin{proof}[Proof of Lemma \ref{techlem2}]
Equivalently, we need to show $$\lim_{\ell \rightarrow \infty} \sup_{c \in [-A,A]^d} \Big| \Big(e^\frac{\| c\|_{2(N+1)}^{2(N+1)}}{(N+1)(2N+1)}  -1 \Big)f\big(c (\ell+1/2)^\frac{2N+1}{2N+2}\big) \Big| = 0.$$  Suppose the contrary.  Let $c_\ell \in [-A,A]^d$ be a value that maximizes the $\ell$-th term in the above limit.  There exists $(\ell_k)_{k \in \mathbb{N}}$, and $\epsilon>0$ such that for all $k \in \mathbb{N}$,
\begin{eqnarray}
\epsilon  &\leq&  \sup_{c \in [-A,A]^d} \Big| \Big(e^\frac{\| c\|_{2(N+1)}^{2(N+1)}}{(N+1)(2N+1)}  -1 \Big)f\big(c (\ell_k+1/2)^\frac{2N+1}{2N+2}\big) \Big|\nonumber\\ &\leq& \Big(e^\frac{d A^{2(N+1)}}{(N+1)(2N+1)}-1\Big)\big| f\big(c_{\ell_k}(\ell_k+1/2)^\frac{2N+1}{2N+2}\big) \big|,\nonumber
\end{eqnarray}
so that the sequence $\big( f\big(c_{\ell_k} (\ell_k+1/2)^\frac{2N+1}{2N+2}\big) \big)_{k \in \mathbb{N}}$ is bounded away from $0$.  This implies there exists $\delta >0$ such that $\big\| c_{\ell_k} (\ell_k+1/2)^\frac{2N+1}{2N+2} \big\|_{2(N+1)} \leq \delta$ for $k \in \mathbb{N}$, that is, $\| c_{\ell_k} \|_{2(N+1)} \leq \delta (\ell_k+1/2)^{-\frac{2N+1}{2N+2}}$.  This forces
\begin{eqnarray}
\epsilon &\leq&  \sup_{c \in [-A,A]^d} \Big| \Big(e^\frac{\| c\|_{2(N+1)}^{2(N+1)}}{(N+1)(2N+1)}  -1 \Big)f\big(c (\ell_k+1/2)^\frac{2N+1}{2N+2}\big) \Big|\nonumber\\  &\leq & \Big(e^\frac{\delta^{2(N+1)}}{(\ell_k+1/2)^{2N+1}(N+1)(2N+1)} -1 \Big)\| f \|_\infty.\nonumber
\end{eqnarray}
The last term in the above inequality has limit $0$ as $\ell \rightarrow \infty$, which is a contradiction.
\end{proof}

\noindent Now we can prove Theorem \ref{simplethm}.\\

\begin{proof}[Proof of Theorem \ref{simplethm}]
If $f \in PW_{[-\pi,\pi]^d}$, Theorem \ref{maintheorem} states that $$f(t) = \frac{\Psi_\ell(t)}{Q_{d,\ell} (t)}\mathrm{SINC(\pi t)} +\xi_\ell (t)$$ where $\xi_\ell \rightarrow 0$ on $\mathbb{R}^d$ both in $L_2$ and $L_\infty$ senses.  By Lemma \ref{techlem1}, we have
\begin{eqnarray}\label{firstpart}
& &\sup_{t \in (E_{\ell,N,A})^d} \Big| \Psi_\ell(t)  e^{-\sum_{k=1}^{N} \frac{1}{k(2k-1)}\frac{\| t \|_{2k}^{2k}}{(\ell+1/2)^{2k-1}}} - e^\frac{\| t\|_{2(N+1)}^{2(N+1)}}{(\ell+1/2)^{2N+1}(N+1)(2N+1)} \frac{\Psi_\ell(t)}{Q_{d,\ell} (t)}\mathrm{SINC(\pi t)} \Big|\\
& &\quad\quad\leq M (\ell+1/2)^{-\frac{1}{N+1}}\sup_{t \in (E_{\ell,N,A})^d}(|f(t)| - |\xi_\ell(t)|),\nonumber
\end{eqnarray}
the right side of which has zero limit.  Also,
\begin{eqnarray}\label{secondpart}
& &\sup_{t \in (E_{\ell,N,A})^d}\quad\Big| \Big(e^\frac{\| t\|_{2(N+1)}^{2(N+1)}}{(\ell+1/2)^{2N+1}(N+1)(2N+1)}  -1 \Big) \frac{\Psi_\ell(t)}{Q_{d,\ell} (t)}\mathrm{SINC(\pi t)} \Big|\\
& &\quad \leq \sup_{t \in (E_{\ell,N,A})^d} \Big| \Big(e^\frac{\| t\|_{2(N+1)}^{2(N+1)}}{(\ell+1/2)^{2N+1}(N+1)(2N+1)}  -1 \Big)f(t) \Big|+\nonumber\\
& & \quad\quad\Big(e^\frac{d A^{2(N+1)}}{(N+1)(2N+1)} -1 \Big) \sup_{t \in (E_{\ell,N,A})^d} |\xi_\ell(t) |,\nonumber
\end{eqnarray}
whose right hand side, by Lemma \ref{techlem2}, also has zero limit.  Combining inequalities (\ref{firstpart}) and (\ref{secondpart}), we obtain
\begin{equation}
\lim_{\ell \rightarrow \infty} \Big\|  \Psi_\ell(t)  e^{-\sum_{k=1}^{N} \frac{1}{k(2k-1)}\frac{\| t \|_{2k}^{2k}}{(\ell+1/2)^{2k-1}}} - \frac{\Psi_\ell(t)}{Q_{d,\ell} (t)}\mathrm{SINC(\pi t)} \Big\|_{L_\infty((E_{\ell,N,A})^d)}=0.\nonumber
\end{equation}
Equation (\ref{simplecor1}) follows by a final application of Theorem \ref{maintheorem}.\\

\noindent Now we prove equation (\ref{simplecor2}).  Lemma \ref{techlem1} and Theorem \ref{maintheorem} imply
\begin{eqnarray}\label{firstpartl2}
 & &\Big\| \Psi_\ell(t)  e^{-\sum_{k=1}^{N} \frac{1}{k(2k-1)}\frac{\| t \|_{2k}^{2k}}{(\ell+1/2)^{2k-1}}} - e^\frac{\| t\|_{2(N+1)}^{2(N+1)}}{(\ell+1/2)^{2N+1}(N+1)(2N+1)} \frac{\Psi_\ell(t)}{Q_{d,\ell} (t)}\mathrm{SINC(\pi t)} \Big\|_{L_2((E_{\ell,N,A})^d)}\\
& &\quad\leq M (\ell+1/2)^{-\frac{1}{N+1}} \|f + \xi_\ell\|_{L_2((E_{\ell,N,A})^d)},\nonumber
\end{eqnarray}
the right hand side of which has zero limit.  Also,
\begin{eqnarray}\label{secondpartl2}
& &\Big\| \Big(e^\frac{\| t\|_{2(N+1)}^{2(N+1)}}{(\ell+1/2)^{2N+1}(N+1)(2N+1)}  -1 \Big) \frac{\Psi_\ell(t)}{Q_{d,\ell} (t)}\mathrm{SINC(\pi t)} \Big\|_{L_2((E_{\ell,N,A})^d)}\\
& &\quad\leq \Big\| \Big(e^\frac{\| t\|_{2(N+1)}^{2(N+1)}}{(\ell+1/2)^{2N+1}(N+1)(2N+1)}  -1 \Big)f(t) \Big\|_{L_2((E_{\ell,N,A})^d)}+\nonumber\\
& & \quad\quad\Big\| \Big(e^\frac{\| t\|_{2(N+1)}^{2(N+1)}}{(\ell+1/2)^{2N+1}(N+1)(2N+1)}  -1 \Big)\xi_\ell(t) \Big\|_{L_2((E_{\ell,N,A})^d)}.\nonumber
\end{eqnarray}
The second term in the right hand side of inequality (\ref{secondpartl2}) is bounded from above by $$\Big(e^\frac{d A^{2(N+1)}}{(N+1)(2N+1)} -1 \Big) \|\xi_\ell\|_{L_2((E_{\ell,N,A})^d)},$$ which has zero limit.  The integrand of the first term in the right hand side of inequality (\ref{secondpartl2}) (as a function over $\mathbb{R}^d$), converges uniformly to zero by Lemma \ref{techlem2}, and is bounded from above by $$\Big(e^\frac{d A^{2(N+1)}}{(N+1)(2N+1)} -1 \Big) |f(t)|^2 \in L_1(\mathbb{R}^d),$$ so this term has zero limit by the Dominated Convergence Theorem. Combining equations (\ref{firstpartl2}) and (\ref{secondpartl2}) yields
\begin{equation}
\lim_{\ell \rightarrow \infty} \Big\|  \Psi_\ell(t)  e^{-\sum_{k=1}^{N} \frac{1}{k(2k-1)}\frac{\| t \|_{2k}^{2k}}{(\ell+1/2)^{2k-1}}} - \frac{\Psi_\ell(t)}{Q_{d,\ell} (t)}\mathrm{SINC(\pi t)} \Big\|_{L_2((E_{\ell,N,A})^d)}=0.\nonumber
\end{equation}
Equation (\ref{simplecor2}) follows by a final application of Theorem \ref{maintheorem}.
\end{proof}

\noindent The optimal growth for any $(E_{\ell,N,k})_\ell$ such that Theorem \ref{simplethm} holds is not known, but an upper bound for the rate is established in the Appendix.

\section{Examples of uniformly invertible exponential Riesz Bases}\label{S:6}

\noindent  Theorems \ref{maintheorem} and \ref{simplethm} both require uniform invertibility of $(f_n)_{n \in \mathbb{Z}^d}$.  Fortunately, there are significant classes of exponential Riesz bases  $(f_n)_{n \in \mathbb{Z}^d}$ which have this property.  Consider the  exponential Riesz bases described given by Theorem \ref{kadeclike} (Corollary 6.1 in \cite{BB}) and Theorem \ref{sunzhou} (Theorem 1.3 in \cite{SZ}).

\begin{thm}\label{kadeclike}
Let $(t_k)_{k \in \mathbb{Z}^d} \subset \mathbb{R}^d$ such that
\begin{equation}
\sup_{n \in \mathbb{Z}^d} \Arrowvert n - t_n \Arrowvert_\infty = L < \frac{\ln(2)}{\pi d},\nonumber
\end{equation}
then the sequence $(f_k)_{k \in \mathbb{Z}^d}$ defined by $f_k (x) = \frac{1}{(2\pi)^{d/2}}e^{i \langle x , t_k \rangle}$ is a Riesz basis for $L_2([-\pi,\pi]^d)$.
\end{thm}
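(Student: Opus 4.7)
The plan is to invoke the classical Paley--Wiener stability criterion: if $(h_n)_{n \in \mathbb{Z}^d}$ is a Riesz basis for a Hilbert space $H$ and $(g_n)_{n \in \mathbb{Z}^d} \subset H$ satisfies
\[
\Big\|\sum_n c_n (h_n - g_n)\Big\| \leq \kappa \Big\|\sum_n c_n h_n\Big\|
\]
for every finitely supported scalar sequence $(c_n)$ and some $\kappa < 1$, then $(g_n)_{n \in \mathbb{Z}^d}$ is also a Riesz basis for $H$. I would apply this with $h_n(x) = (2\pi)^{-d/2}e^{i\langle x, n\rangle}$, the standard orthonormal basis of $L_2([-\pi,\pi]^d)$, and $g_n = f_n$. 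Setting $\delta_n := t_n - n$, so that $\|\delta_n\|_\infty \leq L$ by hypothesis, the task reduces to controlling the $L_2$-norm of
\[
\sum_n c_n (h_n - f_n)(x) \;=\; -\frac{1}{(2\pi)^{d/2}} \sum_n c_n\, e^{i\langle x, n\rangle}\big(e^{i\langle x, \delta_n\rangle} - 1\big).
\]

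For the key estimate, I would Taylor-expand
\[
e^{i\langle x, \delta_n\rangle} - 1 \;=\; \sum_{\alpha \in \mathbb{N}_0^d \setminus \{0\}} \frac{i^{|\alpha|}\, \delta_n^\alpha\, x^\alpha}{\alpha!},
\]
substitute, interchange sums, and apply the triangle inequality in $L_2$ to pull each monomial $x^\alpha$ outside as an $L_\infty$-factor. This yields
\[
\Big\|\sum_n c_n e^{i\langle x, n\rangle}(e^{i\langle x, \delta_n\rangle} - 1)\Big\|_{L_2} \;\leq\; \sum_{\alpha \neq 0} \frac{\|x^\alpha\|_{L_\infty([-\pi,\pi]^d)}}{\alpha!}\, \Big\|\sum_n c_n\, \delta_n^\alpha\, e^{i\langle x, n\rangle}\Big\|_{L_2}.
\]
Now $\|x^\alpha\|_{L_\infty([-\pi,\pi]^d)} = \pi^{|\alpha|}$, while orthonormality of $(e^{i\langle x, n\rangle}/(2\pi)^{d/2})_n$ together with the coordinatewise bound $|\delta_n^\alpha| \leq L^{|\alpha|}$ gives $\|\sum_n c_n \delta_n^\alpha e^{i\langle x, n\rangle}\|_{L_2} \leq L^{|\alpha|} \|\sum_n c_n e^{i\langle x, n\rangle}\|_{L_2}$.

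Summing over $\alpha$ and factorizing across the $d$ coordinates,
\[
\sum_{\alpha \in \mathbb{N}_0^d \setminus \{0\}} \frac{(\pi L)^{|\alpha|}}{\alpha!} \;=\; \prod_{k=1}^d \Big(\sum_{j=0}^\infty \frac{(\pi L)^j}{j!}\Big) - 1 \;=\; e^{d\pi L} - 1,
\]
and the hypothesis $L < \ln(2)/(\pi d)$ is precisely what makes $\kappa := e^{d\pi L} - 1$ strictly less than $1$. The Paley--Wiener stability criterion then yields the conclusion. I expect no serious obstacle; the only subtle ingredient is using the Taylor expansion to convert the pointwise estimate $|e^{i\langle x, \delta_n\rangle} - 1| \leq e^{d\pi L} - 1$ into a genuine operator-norm bound, which works because the expansion presents the perturbation as an absolutely convergent sum of operations that each individually preserve the orthonormal basis structure.
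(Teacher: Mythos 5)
Your proposal is correct and follows essentially the same route as the paper. The paper does not reprove Theorem~\ref{kadeclike} in full but cites it as Corollary~6.1 of~\cite{BB}; the key estimate you derive, namely
\[
\Big\|\sum_n c_n (h_n - f_n)\Big\|_{L_2} \le \big(e^{\pi d L} - 1\big)\Big(\sum_n |c_n|^2\Big)^{1/2},
\]
is exactly the content of Lemma~\ref{mainlemma} in the paper (itself a slight generalization of Lemma~5.3 of~\cite{BB}), and the paper explicitly remarks that the proof shows $\|I - L\| = \delta < 1$ for the basis map $L e_n = f_n$, which is the Paley--Wiener stability criterion you invoke. Your multi-index Taylor expansion of $e^{i\langle x, \delta_n\rangle} - 1$ with $\|x^\alpha\|_{L_\infty([-\pi,\pi]^d)} = \pi^{|\alpha|}$ and $|\delta_n^\alpha| \le L^{|\alpha|}$, followed by the factorization of the multi-index sum into $e^{d\pi L} - 1$, is precisely the ``simple estimates'' the paper alludes to, and the hypothesis $L < \ln 2/(\pi d)$ is what makes the resulting constant strictly less than~$1$.
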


\begin{thm}\label{sunzhou}
For $d \geq 1$, define $$D_d(x) := \Big(1-\cos \pi x+\sin \pi x  +\frac{\sin \pi x}{\pi x} \Big)^d -\Big( \frac{\sin \pi x}{\pi x} \Big)^d,$$ and let $x_d$ be the unique number such that $0 < x_d \leq 1/4$ and $D_d(x_d)=1$.  Let $(t_k)_{k \in \mathbb{Z}^d} \subset \mathbb{R}^d$ such that  
\begin{equation}
\sup_{n \in \mathbb{Z}^d} \| n - t_n \|_\infty< x_d,\nonumber
\end{equation}
then the sequence $(f_k)_{k \in \mathbb{Z}^d}$  defined by $f_k (x) = \frac{1}{(2\pi)^{d/2}}e^{i \langle x , t_k \rangle}$ is a Riesz basis for $L_2([-\pi,\pi]^d)$.
\end{thm}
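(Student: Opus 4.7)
The plan is to extend Kadec's classical $\tfrac14$-theorem to $\mathbb{R}^d$ via an operator-norm estimate. Since the exponentials $e_n(x) := (2\pi)^{-d/2} e^{i\langle x, n\rangle}$, $n\in\mathbb{Z}^d$, form an orthonormal basis of $L_2([-\pi,\pi]^d)$, the sequence $(f_n)$ is a Riesz basis if and only if the bounded linear operator $U$ defined on this basis by $U e_n = f_n$ is an onto isomorphism. I would write $U = \mathrm{Id} + T$ with $T e_n := f_n - e_n$ and reduce the theorem to the operator-norm bound $\|T\| \leq D_d(L) < D_d(x_d) = 1$, so that $U$ is invertible by a Neumann series. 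Note that for $d=1$ one has $D_1(x) = 1 - \cos\pi x + \sin\pi x$ and $x_1 = 1/4$, matching Kadec's theorem exactly.

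For a trigonometric polynomial $g = \sum_n c_n e_n$, setting $\delta_n := t_n - n$ (so $\|\delta_n\|_\infty \leq L < x_d$) gives
$$(Tg)(x) = \frac{1}{(2\pi)^{d/2}}\sum_n c_n\bigl(e^{i\langle x,\delta_n\rangle}-1\bigr) e^{i\langle x, n\rangle}.$$
The heart of the argument is to expand $e^{i\langle x,\delta\rangle}-1$ in the Fourier system $(e^{i\langle x, m\rangle})_{m\in\mathbb{Z}^d}$ in a way that separates the contribution of each output frequency $m$ and yields a sharp Bessel bound. In the univariate case, Kadec expands $e^{i\delta y}-1$ on $[-\pi,\pi]$ as $a_0(\delta) + \sum_{k\ne 0}(a_k(\delta)\cos ky + ib_k(\delta)\sin ky)$, and by grouping cosine and sine parts and applying Bessel's inequality in the free variable, produces the bound $1-\cos\pi L + \sin\pi L$ on the corresponding univariate $T$.

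In $d$ dimensions, factor
$$e^{i\langle x, \delta\rangle}-1 = \prod_{j=1}^d\bigl(1+u_j\bigr)-1 = \sum_{\emptyset\neq S\subseteq\{1,\dots,d\}} \prod_{j\in S}u_j, \qquad u_j := e^{i\delta^{(j)} x_j}-1,$$
and apply Kadec's univariate expansion inside each $u_j$. The contributions organize by the subset $S$ and by the output multi-index $m\in\mathbb{Z}^d$: coordinates $j\in S$ contribute a Kadec-type term of size controlled by $(1-\cos\pi L + \sin\pi L)$, while coordinates $j\notin S$ contribute either $1$ or a $\tfrac{\sin\pi\delta^{(j)}}{\pi\delta^{(j)}}$ factor depending on whether $m^{(j)}=n^{(j)}$. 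Applying Bessel's inequality coordinate by coordinate and reassembling the products yields
$$\|T\| \leq \Bigl(1-\cos\pi L + \sin\pi L + \tfrac{\sin\pi L}{\pi L}\Bigr)^d - \Bigl(\tfrac{\sin\pi L}{\pi L}\Bigr)^d = D_d(L),$$
where the subtracted $(\sin\pi L/(\pi L))^d$ removes the $S=\emptyset$ term that would otherwise be double-counted. Monotonicity of $D_d$ on $(0,1/4]$ together with $D_d(x_d)=1$ then forces $\|T\| < 1$.

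The main obstacle will be the combinatorial bookkeeping in the product expansion: a naive coordinate-wise application of the Kadec estimate yields the too-weak bound $\|T\|\leq (1-\cos\pi L + \sin\pi L)^d$, or alternatively a product-type $(A+B)^d$, neither of which matches the sharp $D_d(L)$. Isolating precisely which term in the product corresponds to the identity operator, so that it can be paired with the $-1$ in $e^{i\langle x,\delta\rangle}-1$, and correctly routing the residual sinc factors through the coordinate-wise Bessel estimates, is where one must be most careful. Once that orthogonal decomposition is in place, the Riesz basis conclusion is immediate from the Neumann series for $U = \mathrm{Id} + T$ applied to the orthonormal basis $(e_n)_{n \in \mathbb{Z}^d}$.
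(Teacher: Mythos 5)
This theorem is not proved in the paper: it is quoted verbatim as Theorem~1.3 of Sun and Zhou \cite{SZ}, so there is no in-paper argument to compare your sketch against. On its own terms, your high-level plan --- write $U = I + T$ with $Te_n = f_n - e_n$, bound $\|T\|$ via a Kadec-type orthogonal expansion with Bessel's inequality applied coordinate by coordinate, and conclude by a Neumann series --- is exactly the route Sun and Zhou take, and your reduction of the $d=1$ case to Kadec's classical $1/4$ theorem is correct.

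However, the specific decomposition you write down does not produce the constant $D_d(L)$. Expanding $e^{i\langle x,\delta\rangle}-1 = \sum_{\emptyset\neq S}\prod_{j\in S}u_j$ with $u_j = e^{i\delta^{(j)}x_j}-1$, and bounding each $u_j$ by the univariate Kadec quantity $A := 1-\cos\pi L + \sin\pi L$, gives $\sum_{\emptyset\neq S}A^{|S|} = (1+A)^d - 1$. With $B := \frac{\sin\pi L}{\pi L} < 1$, the function $t \mapsto (t+A)^d - t^d$ is strictly increasing, so $(1+A)^d - 1 > (A+B)^d - B^d = D_d(L)$: your decomposition overshoots. Moreover the $\frac{\sin\pi\delta^{(j)}}{\pi\delta^{(j)}}$ factors you attribute to $j\notin S$ are nowhere to be found in $\prod_{j\in S}u_j$, since those coordinates contribute only the scalar $1$. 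The correct bookkeeping is to expand each univariate exponential around its DC component rather than around $1$: write $e^{i\delta_j x_j} = B_j + R_j$ with $B_j = \frac{\sin\pi\delta_j}{\pi\delta_j}$ and $R_j$ the cosine/sine residual, so that
\begin{equation}
\prod_{j=1}^d e^{i\delta_j x_j} - 1 = \Bigl(\prod_{j=1}^d B_j - 1\Bigr) + \sum_{\emptyset\neq S}\ \prod_{j\in S}R_j\ \prod_{j\notin S}B_j.\nonumber
\end{equation}
The first block is a scalar with $\bigl|\prod_j B_j - 1\bigr| \le 1 - B^d$ since each $B_j \in [B,1]$; each remaining block, via the coordinate-wise Bessel estimates ($R_j$ contributing $\le A' := (B - \cos\pi L) + \sin\pi L$ and $|B_j|\le 1$), contributes at most $(A')^{|S|}$; summing yields $(1 - B^d) + \bigl[(1+A')^d - 1\bigr] = (1+A')^d - B^d = D_d(L)$, since $1 + A' = 1 - \cos\pi L + \sin\pi L + B$. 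You gesture at this fix in your closing paragraph --- ``isolating precisely which term in the product corresponds to the identity operator, so that it can be paired with the $-1$'' --- and that instinct is right, but the decomposition you actually wrote pairs $-1$ with the multiplicative identity $1$ rather than with $\prod_j B_j$, and until that is corrected the claimed bound does not follow.
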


\noindent It is worth noting that when $d=1$, Theorem \ref{sunzhou} reduces to the classical Kadec's 1/4 Theorem, first proven in \cite{Kad}. A proof of Kadec's 1/4 Theorem can also be found in \cite[Theorem 14, page 36]{Y}.    The proofs of the above theorems show that the map $L e_n = f_n$ satisfies $\| I- L  \| = \delta < 1$, from which we see that $L$ is invertible.  Uniform invertibility is readily verified, or can be seen as a consequence of the following more general proposition.

\begin{prop}\label{normpert}
Let $L: H \rightarrow H$ be a uniformly invertible operator with respect to $(P_\ell)_{\ell \in \mathbb{N}}$, where $$\limsup_{\ell \rightarrow \infty}\| (P_\ell L P_\ell)^{-1} \| = M <\infty.$$  If $A$ is an operator such that
\begin{equation}\label{neumann}
\| L- A \| =\frac{\gamma}{M}
\end{equation}
for some $\gamma <1$, then there exists $N \in \mathbb{N}$ such that $A$ is uniformly invertible with respect to $(P_\ell)_{\ell \geq N}$.
\end{prop}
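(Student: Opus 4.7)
The plan is to apply a standard Neumann series perturbation argument to the finite-dimensional blocks $P_\ell L P_\ell$, treating $P_\ell A P_\ell$ as a small perturbation of them. The key identity (on $P_\ell H$, interpreted in the matrix sense of equation (\ref{abusivenot})) is
$$P_\ell A P_\ell = P_\ell L P_\ell - P_\ell (L-A) P_\ell = (P_\ell L P_\ell)\bigl[I - (P_\ell L P_\ell)^{-1} P_\ell (L-A) P_\ell\bigr].$$

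First I would fix $\delta > 0$ small enough that $\gamma(1+\delta) < 1$. The hypothesis $\limsup_\ell \|(P_\ell L P_\ell)^{-1}\| = M$ then yields $N \in \mathbb{N}$ such that $\|(P_\ell L P_\ell)^{-1}\| \leq M(1+\delta)$ for all $\ell \geq N$. Second, using $\|P_\ell\| \leq 1$, one has $\|P_\ell(L-A)P_\ell\| \leq \|L-A\| = \gamma/M$, so for $\ell \geq N$ the perturbation factor satisfies
$$\bigl\|(P_\ell L P_\ell)^{-1} P_\ell (L-A) P_\ell\bigr\| \leq M(1+\delta) \cdot \frac{\gamma}{M} = \gamma(1+\delta) < 1.$$

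Third, the standard Neumann series then inverts the bracketed operator on $P_\ell H$ with norm at most $[1-\gamma(1+\delta)]^{-1}$. Composing with $(P_\ell L P_\ell)^{-1}$ on the appropriate side yields invertibility of $P_\ell A P_\ell$ together with the bound
$$\|(P_\ell A P_\ell)^{-1}\| \leq \frac{M(1+\delta)}{1-\gamma(1+\delta)}, \qquad \ell \geq N,$$
which is uniform in $\ell \geq N$, so $A$ is uniformly invertible with respect to $(P_\ell)_{\ell \geq N}$.

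The argument is essentially routine once the factorization is set up; the only mild subtlety is that the $\limsup$ hypothesis must be converted into a genuine uniform bound by discarding the first $N-1$ projections, which is exactly what the statement of the proposition permits. There is no substantive obstacle beyond verifying that the operator norm inequalities transfer correctly to the matrix norms underlying the paper's convention in equation (\ref{abusivenot}), which is automatic because the map $\phi$ used to define that convention is an isometry.
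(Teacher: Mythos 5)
Your argument captures the core Neumann perturbation idea correctly, and the factorization $P_\ell A P_\ell = (P_\ell L P_\ell)\bigl[I - (P_\ell L P_\ell)^{-1}P_\ell(L-A)P_\ell\bigr]$ together with the $\limsup$-to-eventual-bound conversion is essentially what the paper does (the paper's specific choice of threshold $\|(P_\ell L P_\ell)^{-1}\| \leq \frac{\gamma+1}{2\gamma}M$ corresponds to your $1+\delta$).

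However, there is a gap: you never verify that $A$ itself is an onto isomorphism of $H$. In this paper "uniformly invertible" is only defined for onto isomorphisms (see the definition preceding equation (\ref{abusivenot}), and Lemma \ref{thirdone}'s hypotheses), so concluding "$A$ is uniformly invertible with respect to $(P_\ell)_{\ell \geq N}$" requires establishing that $A$ is invertible on $H$, not merely that the finite blocks $P_\ell A P_\ell$ are. Uniform bounds on the block inverses do not by themselves yield this, and inferring it from the blocks would require an additional limiting argument. The paper handles this directly before touching the blocks: it first shows via strong convergence of $(P_\ell L^* P_\ell)^{-1}$ to $(L^*)^{-1}$ that $\|L^{-1}\| \leq \liminf_\ell \|(P_\ell L P_\ell)^{-1}\| \leq M$, so that $\|L^{-1}(L-A)\| \leq \gamma < 1$ and $A = L\bigl(I - L^{-1}(L-A)\bigr)$ is invertible by the usual Neumann series on all of $H$. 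Your proof should include this step (or an equivalent one) before asserting uniform invertibility of $A$.
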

\begin{proof}[Proof of Proposition \ref{normpert}]
Using uniform invertiblity of $L$, and noting that $(L_\ell^*)^{-1} -(L^*)^{-1} = (L_\ell^*)^{-1}(L^* - L_\ell^*)(L^*)^{-1}$ and $\lim_{\ell \rightarrow \infty} L_\ell^* x=L^* x$ for all $x \in H$, we see that $ \lim_{\ell \rightarrow \infty} (L_\ell^*)^{-1}x = (L^*)^{-1}x$ for all $x \in H$.  Equation (\ref{ident}) implies that $\lim_{\ell \rightarrow \infty} (L_\ell^*)^{-1} x - (P_\ell L^* P_\ell)^{-1} x =0$ for all $x \in H$. Together we have $$\lim_{\ell \rightarrow \infty} (P_\ell L^* P_\ell)^{-1} x  = (L^*)^{-1}x ,\quad x\in H.$$ General principles imply
\begin{equation}
\| L^{-1} \| \leq \liminf_{\ell \rightarrow \infty} \| (P_\ell L P_\ell)^{-1} \| \leq M.\nonumber
\end{equation}
This yields $\| L - A \| \leq \frac{\gamma}{\| L^{-1}\|}$, so that $A$ is invertible by usual Neumann series manipulation.
Equation (\ref{neumann}) yields that $\| P_\ell L P_\ell- P_\ell A P_\ell \| \leq \frac{\gamma}{M}$, where the norm is now the standard  matrix norm on the set of matrices of  dimension $\mathrm{dim}(\mathrm{ran}{P_\ell})$.  This implies $$ \| P_\ell - (P_\ell L P_\ell)^{-1} (P_\ell A P_\ell)\| \leq \frac{\gamma}{M} \| (P_\ell L P_\ell)^{-1} \|,\quad \ell \in \mathbb{N}.$$  Choose $N$ large enough so that $\| (P_\ell L P_\ell)^{-1} \| \leq \frac{\gamma+1}{2\gamma} M $ when $\ell \geq N$.  This yields $$ \| P_\ell - (P_\ell L P_\ell)^{-1} (P_\ell A P_\ell)\| \leq \frac{\gamma +1}{2}, \quad \ell \geq N.$$
Standard manipulation shows that $P_\ell A P_\ell$ is invertible for $\ell \geq N$, and $$ \sup_{\ell \geq N}\| (P_\ell A P_\ell)^{-1} \| \leq \frac{\gamma+1}{\gamma(1-\gamma)}M.$$
\end{proof}

\noindent Note that in the previous proof, if $M$ is redefined to be $\sup_{\ell \in \mathbb{N}} \| (P_\ell L P_\ell)^{-1} \|$, then $A$ is uniformly invertible with respect to $(P_\ell)_{\ell \in \mathbb{N}}$.\\

\noindent The following proposition shows that  compact perturbations (of arbitrary norm), of a uniformly invertible operator also gives a uniformly invertible operator.

\begin{prop}\label{compactpertthm}
Let $L:H \rightarrow H$ be uniformly invertible with respect to $(P_\ell)_{\ell \in \mathbb{N}}$.  If $\Delta: H \rightarrow H$ is compact such that $\tilde{L} = L + \Delta$ is an onto isomorphism, then there exists $N$ such that $\tilde{L}$ is uniformly invertible with respect to $(P_\ell)_{\ell \geq N}$.
\end{prop}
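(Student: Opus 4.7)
The plan is to deduce uniform invertibility of $\tilde{L}$ via Lemma \ref{thirdone}: it suffices to exhibit an $N$ such that $\tilde{L}_\ell := \tilde{L}P_\ell + I - P_\ell$ is an onto isomorphism for every $\ell \geq N$ with $\sup_{\ell \geq N}\|\tilde{L}_\ell^{-1}\| < \infty$. Since $\tilde{L}_\ell = L_\ell + \Delta P_\ell = L_\ell(I + L_\ell^{-1}\Delta P_\ell)$ and uniform invertibility of $L$ together with Lemma \ref{thirdone} yields $C := \sup_\ell \|L_\ell^{-1}\| < \infty$, the entire problem reduces to showing that $I + L_\ell^{-1}\Delta P_\ell$ is invertible for $\ell$ large with uniformly bounded inverse.

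The key step is to prove the operator-norm convergence $L_\ell^{-1}\Delta P_\ell \to K := L^{-1}\Delta$ as $\ell \to \infty$. The proof of Proposition \ref{normpert} already provides $L_\ell^{-1} \to L^{-1}$ in the strong operator topology with uniform bound $C$, and by construction $P_\ell \to I$ strongly. Standard arguments---covering the precompact sets $\overline{\Delta(\{\|x\|\leq 1\})}$ and $\overline{\Delta^*(\{\|x\|\leq 1\})}$ by finite $\varepsilon$-nets---convert these strong convergences into the norm convergences $\|(L_\ell^{-1} - L^{-1})\Delta\| \to 0$ and $\|\Delta(I - P_\ell)\| \to 0$. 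The triangle inequality $\|L_\ell^{-1}\Delta P_\ell - L^{-1}\Delta\| \leq \|(L_\ell^{-1} - L^{-1})\Delta\| + \|L_\ell^{-1}\|\,\|\Delta(P_\ell - I)\|$ then closes the argument.

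Since $\tilde{L} = L(I + K)$ is an onto isomorphism and so is $L$, it follows that $I + K$ is an onto isomorphism with bounded inverse. Because operator inversion is norm-continuous on the open set of invertible operators, the norm convergence $I + L_\ell^{-1}\Delta P_\ell \to I + K$ from the previous step implies that, for all sufficiently large $\ell$, the operator $I + L_\ell^{-1}\Delta P_\ell$ is invertible with $\|(I + L_\ell^{-1}\Delta P_\ell)^{-1}\| \leq 2\|(I + K)^{-1}\|$. Combining these uniformly bounded inverses with $\sup_\ell\|L_\ell^{-1}\|<\infty$ yields uniformly bounded inverses for $\tilde{L}_\ell$ when $\ell \geq N$, and Lemma \ref{thirdone} then delivers the desired uniform invertibility of $\tilde{L}$ with respect to $(P_\ell)_{\ell \geq N}$.

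The main obstacle is the norm convergence $L_\ell^{-1}\Delta P_\ell \to K$, where the compactness of $\Delta$ is used essentially. Pure strong convergence of $L_\ell^{-1} \to L^{-1}$ and $P_\ell \to I$ would never suffice to run a Neumann-series argument on $I + L_\ell^{-1}\Delta P_\ell$, but compact operators send bounded sets to precompact sets and thereby upgrade pointwise convergence to uniform convergence on their image. Once this upgrade is in place, the remainder is a short perturbation argument modeled on the proof of Proposition \ref{normpert}.
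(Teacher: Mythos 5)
Your proof is correct and follows essentially the same strategy as the paper: factor $\tilde L_\ell$ as $L_\ell$ times a perturbation of the identity, use compactness of $\Delta$ to upgrade the strong convergence $L_\ell^{-1}\to L^{-1}$, $P_\ell\to I$ (with uniform bounds) to operator-norm convergence of that perturbation, and then invoke continuity of inversion. The only cosmetic difference is that you factor $L_\ell$ on the left, giving $I+L_\ell^{-1}\Delta P_\ell$, and split the norm estimate with a triangle inequality using compactness of both $\Delta$ and $\Delta^*$, whereas the paper factors on the right, giving $I+\Delta P_\ell L_\ell^{-1}$, and obtains the norm convergence in one step by passing through adjoints.
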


\begin{proof}[Proof of Proposition \ref{compactpertthm}]
From the definition of $L_\ell$, we have $I = (I-P_\ell)L_\ell^{-1}+L P_\ell L_\ell^{-1}$, so that $$L^{-1}(P_\ell-I)L_\ell^{-1} = P_\ell L_\ell^{-1} -L^{-1}.$$  This implies 
\begin{equation}\label{thisone}
(L_\ell^*)^{-1}P_\ell -(L^*)^{-1} = (L_\ell^*)^{-1}(P_\ell -I)(L^*)^{-1}.
\end{equation}
As $\ell \rightarrow \infty$, the right hand side of equation (\ref{thisone}) has 0 limit pointwise.  Combined with the compactness of $\Delta$, we obtain $$\lim_{\ell \rightarrow \infty} (L_\ell^*)^{-1}P_\ell \Delta = (L^*)^{-1} \Delta$$ where limit is in the operator norm topology.  This yields
\begin{equation}\label{thatone}
\lim_{\ell \rightarrow \infty} I+\Delta P_\ell L_\ell^{-1} = I+\Delta L^{-1} = (L+\Delta)L^{-1},
\end{equation}
where the limit is also in the operator norm topology.  The right had side of equation (\ref{thatone}) is invertible, so there exists $N$ such that $\ell \geq N$ implies $(I+\Delta P_\ell L_\ell^{-1})^{-1}$ exists, and 
\begin{equation}\label{newsup}
\sup_{\ell \geq N} \| (I+\Delta P_\ell L_\ell^{-1})^{-1} \| < \infty.
\end{equation}
From the definition of $\tilde{L}_\ell$, we obtain
\begin{eqnarray}
\tilde{L}_\ell = L_\ell +\Delta P_\ell = (I+\Delta P_\ell L_\ell^{-1})L_\ell.\nonumber
\end{eqnarray}
When $\ell \geq N$, we have $$\tilde{L}_\ell^{-1} = L_\ell^{-1} (I+\Delta P_\ell L_\ell^{-1})^{-1},$$ and equation (\ref{newsup}) implies
\begin{equation}
\sup_{\ell \geq N} \|\tilde{L}_\ell^{-1} \| \leq \sup_{\ell \geq N} \| L_\ell^{-1} \| \sup_{\ell \geq N} \| (I+\Delta P_\ell L_\ell^{-1})^{-1} \| < \infty.
\end{equation}
This completes the proof.
\end{proof}


\noindent The following lemma holds:

\begin{lem}\label{mainlemma}
Choose $(t_k)_{k \in \mathbb{N}} \subset \mathbb{R}^d$ such that $(h_k)_{k \in \mathbb{N}} :=  \big(\frac{1}{(2\pi)^{d/2}}e^{i\langle (\cdot) , t_k \rangle}\big)_{k \in \mathbb{N}}$ satisfies $$\Big\| \sum_{k=1}^n a_k h_k \Big\|_{L_2 [-\pi,\pi]^d} \leq B \Big(\sum_{k=1}^n |a_k|^2 \Big)^{1/2}, \quad \mathrm{for \ all} \quad (c_k)_{k=1}^n \subset \mathbb{C}.$$  If $(\tau_k)_{k \in \mathbb{N}} \subset \mathbb{R}^d$, and $(f_k)_{k \in \mathbb{N}} := \big(\frac{1}{(2\pi)^{d/2}} e^{i \langle (\cdot) , \tau_k \rangle}\big)_{k \in \mathbb{N}}$, then for all $r,s\geq 1$ and any finite sequence $(a_k)_k$, we have
\begin{equation}
\Bigg\Arrowvert \sum_{k=r}^s a_k ( h_k - f_k ) \Bigg\Arrowvert_{L_2 [-\pi,\pi]^d} \\
{} \leq B\Big( e^{\pi d \big({\sup \atop {r \leq k \leq s}} \Arrowvert \tau_k-t_k \Arrowvert_\infty\big)}-1 \Big) \Big(\sum_{k=r}^s |a_k|^2\Big)^\frac{1}{2}.\nonumber
\end{equation}
\end{lem}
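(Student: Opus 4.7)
The plan is to write $h_k-f_k = h_k\bigl(1-e^{i\langle x,\tau_k-t_k\rangle}\bigr)$, expand the second factor as a power series
\[
1-e^{i\langle x,\tau_k-t_k\rangle} = -\sum_{m=1}^{\infty}\frac{i^m}{m!}\langle x,\tau_k-t_k\rangle^m,
\]
and then, after interchanging sums, control each Taylor term using the multinomial expansion of $\langle x,\tau_k-t_k\rangle^m$ together with the hypothesized Bessel-type inequality for $(h_k)_k$.

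First I would fix a finite sum $g(x)=\sum_{k=r}^s a_k(h_k-f_k)(x)$ and, writing $L=\sup_{r\le k\le s}\|\tau_k-t_k\|_\infty$, note that on the compact set $[-\pi,\pi]^d$ all the Taylor series are uniformly absolutely convergent, so I may interchange the finite sum in $k$ with the infinite sum in $m$ to obtain
\[
g(x) = -\sum_{m=1}^{\infty}\frac{i^m}{m!}\sum_{k=r}^s a_k\,\langle x,\tau_k-t_k\rangle^m\,h_k(x).
\]
Next, using the multinomial expansion $\langle x,\tau_k-t_k\rangle^m = \sum_{|\alpha|=m}\binom{m}{\alpha}x^\alpha(\tau_k-t_k)^\alpha$ and pulling $x^\alpha$ out with the pointwise bound $|x^\alpha|\le \pi^{|\alpha|}=\pi^m$ on $[-\pi,\pi]^d$, the triangle inequality gives
\[
\Bigl\|\sum_{k=r}^s a_k\langle x,\tau_k-t_k\rangle^m h_k\Bigr\|_{L_2[-\pi,\pi]^d}
\le \pi^m\sum_{|\alpha|=m}\binom{m}{\alpha}\Bigl\|\sum_{k=r}^s a_k(\tau_k-t_k)^\alpha h_k\Bigr\|_{L_2[-\pi,\pi]^d}.
\]

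Now each of the inner norms is exactly of the form to which the hypothesis applies, since $(\tau_k-t_k)^\alpha$ is a scalar coefficient; this yields the bound $B\bigl(\sum_k|a_k(\tau_k-t_k)^\alpha|^2\bigr)^{1/2}\le BL^m\bigl(\sum_k|a_k|^2\bigr)^{1/2}$, where the $L^m$ comes from $|(\tau_k-t_k)^\alpha|\le L^{|\alpha|}=L^m$. Combining with $\sum_{|\alpha|=m}\binom{m}{\alpha}=d^m$ (multinomial identity) gives
\[
\Bigl\|\sum_{k=r}^s a_k\langle x,\tau_k-t_k\rangle^m h_k\Bigr\|_{L_2[-\pi,\pi]^d}\le B(\pi d L)^m\Bigl(\sum_{k=r}^s|a_k|^2\Bigr)^{1/2}.
\]
Summing $1/m!$ times this over $m\ge 1$ produces the factor $e^{\pi d L}-1$ and yields the claimed inequality.

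The only delicate point is the justification of swapping the $k$- and $m$-sums; since the $k$-sum is finite this reduces to uniform convergence of the exponential Taylor series on a compact set, which is standard, so no genuine obstacle arises. The rest is bookkeeping: the multinomial identity $\sum_{|\alpha|=m}\binom{m}{\alpha}=d^m$ is what produces the factor $d$ in $e^{\pi d L}-1$, and pulling out $|x^\alpha|\le\pi^m$ is what produces the factor $\pi$.
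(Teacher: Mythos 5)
Your proof is correct and is essentially the standard argument for this type of perturbation bound. The paper itself does not give a proof — it states that the lemma is ``a slight generalization of Lemma 5.3, found in \cite{BB} using simple estimates'' and is ``proven similarly'' — but your decomposition $h_k - f_k = h_k\bigl(1-e^{i\langle x,\tau_k-t_k\rangle}\bigr)$, followed by the Taylor expansion in $m$, the multinomial expansion in $\alpha$, the use of $|x^\alpha|\le\pi^m$ on $[-\pi,\pi]^d$, the upper Bessel bound applied to the modified coefficients $a_k(\tau_k-t_k)^\alpha$ (padding with zeros for $k<r$), and the identity $\sum_{|\alpha|=m}\binom{m}{\alpha}=d^m$ is precisely the ``simple estimates'' route the paper alludes to. The bookkeeping lands exactly on $B(e^{\pi d L}-1)$ with $L=\sup_{r\le k\le s}\|\tau_k-t_k\|_\infty$, matching the stated bound, and the interchange of the finite $k$-sum with the uniformly convergent $m$-series is routine. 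No gaps.
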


\noindent This lemma is a slight generalization of Lemma 5.3, found in \cite{BB} using simple estimates.  Lemma \ref{mainlemma} is proven similarly.  A consequence of Lemma \ref{mainlemma} is the following corollary.

\begin{cor}\label{compactcor}
Let $(t_k)_{k \in \mathbb{N}} \subset \mathbb{R}^d$ such that $(f_k)_{k \in \mathbb{N}}$ (the usual exponential sequence defined in terms of $(t_n)_n$) is a Riesz basis for $L_2([-\pi,\pi]^d)$.  Let $(\tau_k)_{k \in \mathbb{N}} \subset \mathbb{R}^d$, and define $(g_k)_{k \in \mathbb{N}}$ by $g_k(x) = \frac{1}{(2\pi)^{d/2}}e^{i \langle x , \tau_k \rangle}$.  Let $(b_k)_{k \in \mathbb{Z}}$ be an orthonormal basis for $L_2([-\pi,\pi]^d)$.  If  $$\lim_{k \rightarrow \infty} \| t_k -\tau_k \|_\infty=0,$$ then the operator defined by $b_k \mapsto f_k - g_k$ is compact.
\end{cor}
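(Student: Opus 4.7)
The plan is to exhibit the operator $T: b_k \mapsto f_k - g_k$ as the norm-limit of finite-rank operators, so that compactness follows from the standard characterization of compact operators on a Hilbert space. For $N \in \mathbb{N}$, define $T_N: L_2([-\pi,\pi]^d) \to L_2([-\pi,\pi]^d)$ by
\begin{equation}
T_N b_k = \begin{cases} f_k - g_k, & k \leq N, \\ 0, & k > N. \end{cases} \nonumber
\end{equation}
Each $T_N$ has range contained in $\mathrm{span}\{f_k - g_k\}_{k \leq N}$ and is therefore finite-rank. It remains to show $\|T - T_N\| \to 0$.

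To estimate $\|T - T_N\|$, fix any $x = \sum_k a_k b_k \in L_2([-\pi,\pi]^d)$ and compute
\begin{equation}
(T - T_N)x = \sum_{k > N} a_k (f_k - g_k). \nonumber
\end{equation}
Since $(f_k)_{k \in \mathbb{N}}$ is a Riesz basis, it in particular satisfies a Bessel-type upper bound $\|\sum a_k f_k\| \leq B (\sum |a_k|^2)^{1/2}$ for some $B > 0$. Applying Lemma \ref{mainlemma} with $h_k := f_k$ (and the given perturbed sequence $g_k$), over the index range $r = N+1$ and $s \to \infty$, yields
\begin{equation}
\Big\| \sum_{k > N} a_k (f_k - g_k) \Big\|_{L_2([-\pi,\pi]^d)} \leq B \Big( e^{\pi d \, \sup_{k > N} \|t_k - \tau_k\|_\infty} - 1 \Big) \Big( \sum_{k > N} |a_k|^2 \Big)^{1/2}. \nonumber
\end{equation}
Since $(b_k)$ is orthonormal, $(\sum_{k > N} |a_k|^2)^{1/2} \leq \|x\|$, so
\begin{equation}
\| T - T_N \| \leq B \Big( e^{\pi d \, \sup_{k > N} \|t_k - \tau_k\|_\infty} - 1 \Big). \nonumber
\end{equation}

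The hypothesis $\|t_k - \tau_k\|_\infty \to 0$ as $k \to \infty$ gives $\sup_{k > N} \|t_k - \tau_k\|_\infty \to 0$ as $N \to \infty$, so the right-hand side tends to $0$. Hence $T_N \to T$ in operator norm, and $T$ is compact as a norm-limit of finite-rank operators. There is no serious obstacle here; the only delicate point is verifying that the Bessel-type constant $B$ is uniform in the index range, which is immediate from the fact that $(f_k)$ is a Riesz basis (so the upper frame bound holds for every finite subsum, in particular the tail subsums used above).
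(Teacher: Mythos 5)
Your proof is correct and takes the approach the paper evidently intends: the paper omits the proof (citing the analogous Corollary 5.5 of \cite{BB}), but Lemma \ref{mainlemma} is placed directly before Corollary \ref{compactcor} precisely to feed a finite-rank truncation argument of exactly this kind. The truncation $T_N$, the tail estimate via Lemma \ref{mainlemma} using the upper frame bound of the Riesz basis $(f_k)$, and the conclusion that $T$ is a norm-limit of finite-rank operators form the standard route; the only small stylistic point is that Lemma \ref{mainlemma} is stated for finite sums, so strictly speaking one bounds the partial sums uniformly and passes to the limit in $s$, which is what you are implicitly doing.
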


\noindent The proof of Corollary \ref{compactcor} is similar to the proof of Corollary 5.5 in \cite{BB}, so it is omitted.

\begin{cor}\label{multidimpert}
Let $(t_k)_{k \in \mathbb{N}} \subset \mathbb{R}^d$. Let $(f_k)_{k \in \mathbb{N}}$ be a  Riesz basis for $L_2([-\pi,\pi]^d)$ which is uniformly invertible with respect to the projections $(P_\ell)_{\ell \in \mathbb{N}}$ defined at the beginning of section 4.  If $(\tau_k)_{k \in \mathbb{N}} \subset \mathbb{R}^d$, and $(g_k)_{k \in \mathbb{N}}$ are as in Corollary (\ref{compactcor}), and additionally, $(g_k)_{k \in \mathbb{N}}$ is a Riesz basis for $L_2([-\pi,\pi]^d)$, then $(g_k)_{k \in \mathbb{N}}$ is uniformly invertible with respect to a sequence of projections $(P_\ell)_{\ell \geq N}$ for some $N >0$.
\end{cor}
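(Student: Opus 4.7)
The plan is to realize this as a direct application of the compact perturbation result (Proposition \ref{compactpertthm}) combined with Corollary \ref{compactcor}. Let $(b_k)_{k \in \mathbb{N}}$ be the orthonormal basis of $L_2([-\pi,\pi]^d)$ obtained by taking the exponentials $(e_n)_{n \in \mathbb{Z}^d}$ in the linear order that is compatible with the nested projections $(P_\ell)_{\ell \in \mathbb{N}}$ defined at the start of section 4. With respect to this ordering, define the onto isomorphisms $L, \tilde L : L_2([-\pi,\pi]^d) \to L_2([-\pi,\pi]^d)$ by $L b_k = f_k$ and $\tilde L b_k = g_k$. The hypothesis that $(f_k)_{k \in \mathbb{N}}$ is a uniformly invertible Riesz basis with respect to $(P_\ell)_{\ell \in \mathbb{N}}$ is exactly the assertion that $L$ is uniformly invertible with respect to $(P_\ell)_{\ell \in \mathbb{N}}$, and the hypothesis that $(g_k)_{k \in \mathbb{N}}$ is a Riesz basis says that $\tilde L$ is an onto isomorphism.

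Next I would examine the perturbation $\Delta := \tilde L - L$, which acts on the orthonormal basis by $\Delta b_k = g_k - f_k$. The assumption $\lim_{k \to \infty} \| t_k - \tau_k \|_\infty = 0$ is precisely what is needed to apply Corollary \ref{compactcor}, which then tells us that $\Delta$ is a compact operator on $L_2([-\pi,\pi]^d)$.

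With these ingredients in hand, the hypotheses of Proposition \ref{compactpertthm} are satisfied: $L$ is uniformly invertible with respect to $(P_\ell)_{\ell \in \mathbb{N}}$, $\Delta$ is compact, and $\tilde L = L + \Delta$ is an onto isomorphism. The proposition then yields some $N \in \mathbb{N}$ such that $\tilde L$ is uniformly invertible with respect to $(P_\ell)_{\ell \geq N}$. Translating back through the definition of uniform invertibility of a Riesz basis, this is exactly the statement that $(g_k)_{k \in \mathbb{N}}$ is uniformly invertible with respect to $(P_\ell)_{\ell \geq N}$.

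There is no serious obstacle in this argument once the two preceding results have been established; the only subtle point is bookkeeping, namely making sure that $L$ and $\tilde L$ are defined with respect to the same orthonormal basis $(b_k)$ and the same linear ordering used for the projections $(P_\ell)$, so that the compactness of $g_k - f_k \mapsto$ images supplied by Corollary \ref{compactcor} is indeed the compactness of the difference $\tilde L - L$ required by Proposition \ref{compactpertthm}.
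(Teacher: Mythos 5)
Your proposal is correct and follows exactly the same route as the paper, whose proof is the single line ``Apply Corollaries \ref{compactpertthm} and \ref{compactcor}.'' You simply spell out the intermediate bookkeeping (defining $L$, $\tilde L$, and $\Delta = \tilde L - L$ with respect to the same ordered orthonormal basis) that the paper leaves implicit.
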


\begin{proof}[Proof of Corollary \ref{multidimpert}]
Apply Corollaries \ref{compactpertthm} and \ref{compactcor}.
\end{proof}

\noindent This corollary relates to Theorem \ref{maintheorem} in the following way.  Let $(g_k)_{k \in \mathbb{N}}$ is as in the preceding theorem.  Usage of Corollary \ref{compactpertthm} in the proof of Corollary \ref{multidimpert} does not ensure that low order polynomial interpolants will exist; however, they will existence for sufficiently large $\ell$.  Simple examples show that in Corollary \ref{multidimpert}, the additional assumption that $(g_k)_{k \in \mathbb{N}}$ is a Riesz basis for $L_2([-\pi,\pi]^d)$ cannot be dropped when $d \geq 2$.  Example:  The standard exponential orthonormal basis $(e_n)_{n \in \mathbb{Z}^d}$ is of course uniformly invertible, but the set $$\Big(\frac{1}{(2\pi)^{d/2}}e^{i\langle (\cdot) , (1,1/2,0,\cdots,0) \rangle}\Big)\cup(e_n)_{n \neq 0}$$ is not a Riesz basis.  However, this condition can be dropped when $d=1.$  This follows from Corollary \ref{compactcor} and the following theorem.\\

\begin{thm}\label{pakshinlike}
Let $(t_n)_{n \in \mathbb{Z}}\ \subset \mathbb{R}$ be a sequence such that $(f_n)_{n \in \mathbb{Z}}$ (defined as before) is a Riesz basis for $L_2[-\pi,\pi]$.  If $(\tau_n)_{n \in \mathbb{Z}} \subset \mathbb{R}$ is a sequence of distinct points such that $$\lim_{|n| \rightarrow \infty} |t_n-\tau_n|=0,$$ then $(g_n)_{n \in \mathbb{Z}}$ (defined as before) is a Riesz basis for $L_2[-\pi,\pi].$
\end{thm}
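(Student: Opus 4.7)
The plan is to combine the Fredholm-theoretic machinery of this section with a one-dimensional uniqueness argument for Paley-Wiener functions.

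First, I would apply Corollary \ref{compactcor} with $d=1$: the hypothesis $\lim_{|n|\to\infty}|t_n-\tau_n|=0$ yields compactness of the operator $K$ sending an orthonormal basis $(b_n)$ of $L_2[-\pi,\pi]$ to the sequence $(f_n-g_n)$. Defining $L b_n = f_n$ and $\tilde L b_n = g_n$, the hypothesis that $(f_n)$ is a Riesz basis makes $L$ an onto isomorphism, and $\tilde L = L - K$. Standard Fredholm theory then yields that $\tilde L$ is Fredholm of index zero, so to conclude that $(g_n)$ is a Riesz basis it suffices to prove that $\tilde L$ is injective, or equivalently (by index zero) that $(g_n)$ is complete in $L_2[-\pi,\pi]$.

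Second, I would reformulate completeness of $(g_n)$ as a uniqueness statement in $PW_{[-\pi,\pi]}$. If $h \in L_2[-\pi,\pi]$ is orthogonal to every $g_n$, then $\Phi(z) := \int_{-\pi}^{\pi} h(x)\, e^{-izx}\,dx$ belongs to $PW_{[-\pi,\pi]}$ and satisfies $\Phi(\tau_n)=0$ for every $n \in \mathbb{Z}$. Thus the task reduces to showing that no nonzero element of $PW_{[-\pi,\pi]}$ vanishes on all of $(\tau_n)$.

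Third, I would establish this uniqueness using one-dimensional entire function theory. The Riesz basis property of $(f_n)$ implies that $(t_n)$ is uniformly discrete and that the sampling inequality $A\|F\|_{L_2}^2 \leq \sum_n |F(t_n)|^2 \leq B \|F\|_{L_2}^2$ holds for some $A,B>0$ and all $F \in PW_{[-\pi,\pi]}$; combined with distinctness of the $\tau_n$ and $|t_n-\tau_n|\to 0$, the set $(\tau_n)$ is also uniformly discrete and shares the limiting density of $(t_n)$. A hypothetical nonzero $\Phi \in \ker \tilde L^*$ would, via the mean-value estimate $\Phi(t_n) = \Phi'(\xi_n)(t_n - \tau_n)$, Bernstein's inequality $\|\Phi'\|_{L_2}\leq \pi\|\Phi\|_{L_2}$, and a Plancherel-Polya bound on the separated tail $(\xi_n)_{|n|>N}$, satisfy $\sum_{|n|>N}|\Phi(t_n)|^2 \to 0$ as $N\to\infty$, so that the ``defect'' of $\Phi$ lies in finitely many indices. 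The finite-dimensional subspace furnished by Fredholm theory is then eliminated by a Hadamard factorization argument: any two linearly independent elements of $\ker\tilde L^*$ would share the zero set $(\tau_n)$ up to a set of density zero, so their ratio would be a bounded entire function of exponential type zero, hence constant by Phragmen-Lindelof, contradicting linear independence; a final application of Hurwitz together with the sampling lower bound then forces the kernel to be trivial.

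The main obstacle, and the reason the argument is restricted to $d=1$, is the last synthesis step: bounded entire functions of exponential type zero on $\mathbb R$ are constant by Phragmen-Lindelof, a tool unavailable in several complex variables. This is consistent with the example immediately preceding the statement of Theorem \ref{pakshinlike}, where perturbing the standard orthonormal exponential basis of $L_2([-\pi,\pi]^d)$ at a single multi-index destroys the Riesz basis property when $d\geq 2$.
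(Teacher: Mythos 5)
Your first two moves are correct and in fact parallel the paper's own proof: Corollary~\ref{compactcor} gives $\tilde L = L + \Delta$ with $\Delta$ compact, and since $L$ is an onto isomorphism, $\tilde L$ is Fredholm of index zero, so injectivity, surjectivity, and being an onto isomorphism are all equivalent.  The paper realises this same reduction more concretely: it sets $R_\ell = LP_\ell + \tilde L (I - P_\ell) = L + \Delta(I-P_\ell)$, observes that compactness of $\Delta$ forces $\|R_\ell - L\| \to 0$, and thus concludes that for some $\ell_0$ the ``hybrid'' system $(f_n)_{|n|\le\ell_0}\cup(g_n)_{|n|>\ell_0}$ is a Riesz basis.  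What remains in both approaches is to swap the finitely many surviving $f_n$ for the corresponding $g_n$, and it is here that your argument has a genuine gap.

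The paper handles the finite swap with Lemma~\ref{rbreplace}: replace one exponential at a time, and verify that replacing $f_0$ by $g_0$ preserves the Riesz basis property precisely because the biorthogonal $G_0 = \mathcal F(f_0^*)$ has \emph{no} real zeros besides $(t_n)_{n\ne 0}$.  That fact is itself a small uniqueness theorem, proved by dividing out a hypothetical extra zero, staying inside $PW_{[-\pi,\pi]}$ by Theorem~\ref{PaleyWiener}, and contradicting the interpolation expansion.  Your substitute for this step --- that two independent elements $\Phi_1,\Phi_2$ of $\ker\tilde L^*$ would have a ratio that is ``a bounded entire function of exponential type zero,'' hence constant by Phragm\'en--Lindel\"of, and that a Hurwitz argument then kills the last dimension --- does not hold up.  The quotient $\Phi_1/\Phi_2$ need not be entire at all: $\Phi_2$ may have zeros off the set $(\tau_n)$, or zeros of higher multiplicity on it, and there is no reason for the resulting meromorphic function to be bounded or of exponential type zero.  (Nor does your preceding Plancherel--P\'olya estimate close the gap: it shows only that the map $\Phi \mapsto (\Phi(t_n))_{|n|\le N}$ is injective on the kernel, confirming finite dimensionality --- which Fredholm theory already gave --- not triviality.)  To complete your route you would still need the content of Lemma~\ref{rbreplace}, i.e.\ a concrete argument that the zero set of the relevant biorthogonal generating function is exactly $(\tau_n)$ (or $(t_n)$), and the Hadamard/Phragm\'en--Lindel\"of sketch does not deliver it.  The closing observation about $d=1$ versus $d\ge 2$ is correct and matches the example preceding the theorem, but it doesn't repair the kernel-elimination step.
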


\noindent The proof of Theorem \ref{pakshinlike} relies on Lemma \ref{rbreplace} below, which originally appears as Lemma 3.1 in \cite{PS}.  The proof of Lemma \ref{rbreplace} found in \cite{PS} itself relies on a citation, so for the sake of completeness it is presented here with a self-contained proof.

\begin{lem}\label{rbreplace}
Let $(f_n)_{n \in \mathbb{Z}}$ be an exponential Riesz basis for $L_2[-\pi,\pi]$.  If $(g_n)_{|n| \leq \ell}$ is a sequence of complex exponentials such that
\begin{equation}
(g_n)_{|n| \leq \ell} \cup (f_n)_{|n|>{\ell_0}}\nonumber
\end{equation}
consists of distinct functions, then this set is a Riesz basis for $L_2[-\pi,\pi]$.
\end{lem}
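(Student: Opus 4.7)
The plan is to reduce the Riesz basis claim to a uniqueness statement in $PW_{[-\pi,\pi]}$, which I then settle via a rational-function perturbation.

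First, I invoke a standard finite-perturbation principle. Since $(f_n)_{n\in\Z}$ is a Riesz basis, the subspace $V := \overline{\mathrm{span}}(f_n)_{|n|>\ell}$ is closed with $V^\perp = \mathrm{span}(f_m^*)_{|m|\le\ell}$, so $\dim V^\perp = 2\ell+1$. The subspace $W := \mathrm{span}(g_n)_{|n|\le\ell}$ has dimension $2\ell+1$ since distinct complex exponentials on $[-\pi,\pi]$ are linearly independent. Because $W$ is finite-dimensional, $V+W$ is automatically closed, and a Riesz basis of $V$ together with a Riesz basis of $W$ forms a Riesz basis of the topological direct sum $V\oplus W$. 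Thus it suffices to prove $L_2[-\pi,\pi] = V\oplus W$, and since $\dim W = \dim V^\perp$ this is equivalent to $V^\perp\cap W^\perp = \{0\}$.

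Second, I rephrase $V^\perp\cap W^\perp=\{0\}$ via the Fourier transform. Writing $G_m := \mathcal{F}(f_m^*)$ so that $G_m(t_n) = \delta_{mn}$, and $g_n(x) = (2\pi)^{-1/2}e^{is_n x}$, a general element $F = \sum_{|m|\le\ell} a_m G_m$ of $\mathcal{F}(V^\perp)$ lies in $\mathcal{F}(W^\perp)$ iff $F(s_n)=0$ for every $|n|\le\ell$ (by the reproducing-kernel property). Every such $F$ already vanishes at $(t_n)_{|n|>\ell}$, so the claim reduces to showing that
\[
\Lambda := (t_n)_{|n|>\ell}\cup (s_n)_{|n|\le\ell},
\]
which is a set of pairwise distinct points by hypothesis, is a uniqueness set for $PW_{[-\pi,\pi]}$.

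Third, I establish this uniqueness by a rational-function trick. Split $(s_n)_{|n|\le\ell}$ into the subset $A=\{a_1,\ldots,a_k\}$ of points not lying in $(t_n)_{n\in\Z}$ and its complement $B$; since the $s_n$ are distinct from $(t_n)_{|n|>\ell}$, necessarily $B\subseteq\{t_n:|n|\le\ell\}$, and a quick count shows that $\{t_n:|n|\le\ell\}\setminus B$ has exactly cardinality $k$, enumerated say as $t_{n_1},\ldots,t_{n_k}$. For $F\in PW_{[-\pi,\pi]}$ vanishing on $\Lambda$, set
\[
H(z) := F(z)\prod_{j=1}^{k}\frac{z-t_{n_j}}{z-a_j}.
\]
The simple zeros of $F$ at each $a_j$ cancel the simple poles of the rational factor, so $H$ is entire; the rational factor is $1+O(1/z)$ at infinity, so $|H(z)|\le M e^{\pi|z|}$ and $H\in L_2(\R)$, placing $H\in PW_{[-\pi,\pi]}$ by Theorem \ref{PaleyWiener}. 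A case check at each $t_n$ (using the hypotheses on $\Lambda$ and the construction of $A$ and $\{t_{n_j}\}$) shows $H(t_n)=0$ for every $n\in\Z$, and since $(t_n)_{n\in\Z}$ is a uniqueness set for $PW_{[-\pi,\pi]}$ by completeness of $(f_n)$, we conclude $H\equiv 0$ and hence $F\equiv 0$.

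The main obstacle is the combinatorial bookkeeping in the third step: one must verify that the number of ``new'' points in $A$ exactly matches the number of $t_n$ with $|n|\le\ell$ that get ``left out'' of $\Lambda$, so that the rational factor has equal numerator and denominator degree. This balance is precisely what preserves the exponential type $\pi$ and the $L_2$-integrability of $H$ needed to invoke uniqueness of the original Riesz basis sequence.
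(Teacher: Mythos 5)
Your argument is correct, and it takes a genuinely different route from the paper's proof. The paper reduces, by induction, to replacing a single exponential ($\ell=0$); invokes Lemma~\ref{secondone} to convert the Riesz-basis claim into one-to-one-ness of the replacement operator, which becomes the scalar statement $G_0(\tau_0)\neq 0$; and establishes this by showing, via a rational-factor perturbation and Theorem~\ref{PaleyWiener}, that the only real zeros of the single biorthogonal $G_0$ are $(t_n)_{n\neq 0}$. You instead handle all $2\ell+1$ replacements at once: the codimension count $\dim W = 2\ell+1 = \dim V^\perp$, the closedness of $V+W$, and the observation that $L_2 = V\oplus W$ holds exactly when $V^\perp\cap W^\perp = \{0\}$ together reduce the claim to a single Paley--Wiener uniqueness statement for the node set $\Lambda$, which you then settle with one rational factor whose balanced degree is forced by the distinctness hypothesis. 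Both proofs rest on the same core mechanism---correct the zero set by an asymptotically trivial rational function so that Theorem~\ref{PaleyWiener} and the uniqueness of the original node sequence $(t_n)_n$ apply---but your version sidesteps the induction and the operator machinery of Lemma~\ref{secondone}, at the modest cost of the combinatorial bookkeeping among $A$, $B$, and $\{t_{n_j}\}$, which you correctly flag as the delicate point. One small remark: in identifying $\mathcal{F}(W^\perp)$ with $\{F : F(s_n)=0,\ |n|\le\ell\}$ via the reproducing kernel you are implicitly taking the frequencies $s_n$ to be real (for complex $s_n$ the condition involves $\overline{s_n}$); the paper's own proof makes the same implicit restriction (it takes $\tau_0\in\mathbb{R}$), so this does not affect the comparison or the application of the lemma.
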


\begin{proof}[Proof of Lemma \ref{rbreplace}]
If we can prove the case when $\ell = 0$, the general result follows inductively.  Let $f_n(\cdot) = \frac{1}{\sqrt{2\pi}}e^{i \langle \cdot , t_n \rangle}$ for $n \neq 0$, and $g_0(\cdot) =  \frac{1}{\sqrt{2\pi}}e^{i \langle \cdot , \tau_0 \rangle}$ where $\tau_0 \in \mathbb{R}$ and $\tau_0 \neq t_n$ for $n \neq 0$.  Let $(e_n)_{n \in \mathbb{Z}}$ be an orthonormal basis for $L_2[-\pi,\pi]$.  Lemma \ref{secondone} shows that $(g_0) \cup (f_n)_{n \neq 0}$ is a Riesz basis if and only if the map defined by $$ e_0 \mapsto g_0, \quad e_k \mapsto f_k, \ \mathrm{for} \ k \neq 0 $$ is one to one.  This is readily seen to be equivalent to $\langle g_0, f_1^* \rangle \neq 0$, or by passing to the Fourier transform, to $G_0 (\tau_0)\neq 0$, (recall that $G_0 = \mathcal{F}(f_0^*)$).  If we can show that the only zeros of $G_0$ in $\mathbb{R}$ are $(t_n)_{n \neq 0}$, we are done.\\

\noindent Suppose there exists $\lambda \in \mathbb{R}$, $\lambda\notin (t_n)_{n \neq 0}$.  Such that $G_0 (\lambda) = 0$ with multiplicity $m$.  Define the entire function $$ H(t) = \frac{(t_0 -\lambda)^m}{(t-\lambda)^m}G_0 (t).$$  Note that $H|_\mathbb{R} \in L_2(\mathbb{R})$, and $H$ is of exponential type $\pi$, so $H \in PW_{[-\pi,\pi]}$ by the Theorem \ref{PaleyWiener}.  The expansion $$H(t) = \sum_{n \in \mathbb{Z}} H(t_n)G_n(t),$$ combined with $H(t_n) = \delta_{n,0}$, shows that $H(t) = G_0(t)$ for all $t \in \mathbb{R}$, an immediate contradiction.  We conclude that $G_0 (\lambda) \neq 0$.
\end{proof}

\begin{proof}[Proof of Theorem \ref{pakshinlike}]
\noindent Define $L e_n = f_n$ and $\tilde{L}e_n = g_n$.  By Corollary \ref{compactcor}, $\tilde{L}$ is bounded linear and $\tilde{L} = L+\Delta$ for some compact operator $\Delta$.  Define the operator \begin{displaymath}
   R_\ell e_n = \left\{
     \begin{array}{lr}
       f_n,  &  |n| \leq \ell\\
       g_n & |n| > \ell
     \end{array}
   \right..
\end{displaymath}
Rewritten, we have
\begin{equation}
R_\ell = L P_\ell +(L+\Delta)(I-P_\ell) = L+\Delta(I-P_\ell).\nonumber
\end{equation}
Compactness of $\Delta$ implies that $\lim_{\ell \rightarrow \infty} R_\ell = L$ in the operator norm topology.  We conclude that $R_{\ell_0}$ is an onto isomorphism for some $\ell_0$ sufficently large; that is, the set 
\begin{equation}\label{dumb}
(f_n)_{|n| \leq \ell_0} \cup (g_n)_{|n|>{\ell_0}}
\end{equation}
is a Riesz basis for $L_2[-\pi,\pi]$.  If we apply Lemma \ref{rbreplace}, by replacing $(f_n)_{|n| \leq \ell_0}$ with $(g_n)_{|n| \leq \ell_0}$ in expression (\ref{dumb}), we have that $(g_n)_{n \in \mathbb{Z}}$ is a Riesz basis for $L_2[-\pi,\pi].$
\end{proof}

\section{Appendix: Comments regarding the optimality of Theorem \ref{simplethm}}\label{S:5}

\noindent In the statement of Theorem \ref{simplethm}, it is not apparent whether or not $(E_{\ell,N,k})_\ell$ can be replaced with a more rapidly growing sequence of intervals; however, Proposition \ref{sinctheorem} shows that if $f(t) = \mathrm{SINC}(\pi t)$, equations (\ref{simplecor1}) and (\ref{simplecor2}) can hold for a sequence of intervals $(E_{\ell,N})_\ell$ which grow faster than $(E_{\ell,N,A})_\ell$.  Propositions \ref{l2bound} and \ref{linfbound} show that growth bounds of the intervals in Proposition \ref{sinctheorem} are optimal for the conclusion of said proposition to hold.  Thus, the bounds in Proposition \ref{sinctheorem} provide upper bounds for the growth of \textit{any} sequence $(E_{\ell,N,A})_\ell$ such that either equation (\ref{simplecor1}) or equation (\ref{simplecor2}) hold for general multivariate bandlimited functions.

\begin{prop}\label{sinctheorem}
Define 
\begin{eqnarray}
C_{\ell,N} & = & \Big(\frac{1}{4}(2N+1)^2 (\ell+1/2)^{2N+1} \log(\ell+1/2)\Big)^\frac{1}{2(N+1)}, \ \mathrm{and}\nonumber\\
D_{\ell,N} & = & \Big(\frac{1}{2}(2N+1)^2 (\ell+1/2)^{2N+1} \log(\ell+1/2)\Big)^\frac{1}{2(N+1)},\nonumber
\end{eqnarray}
then
\begin{equation}\label{sinctheorem1}
\lim_{\ell \rightarrow \infty} \Big\| \mathrm{SINC}(\pi t) - I_{\mathrm{SINC}\pi(\cdot), \ell}(t)  \Big\|_{L_2([-C_{\ell,N},C_{\ell,N}]^d)}=0,
\end{equation}
and
\begin{equation}\label{sinctheorem2}
\lim_{\ell \rightarrow \infty} \Big\| \mathrm{SINC}(\pi t) - I_{\mathrm{SINC}\pi(\cdot), \ell}(t)  \Big\|_{L_\infty([-D_{\ell,N},D_{\ell,N}]^d)}=0.
\end{equation}
\end{prop}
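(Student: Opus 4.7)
The plan is to specialize the hypothesis to the standard integer lattice $t_n = n$, for which $(f_n)_{n\in\mathbb{Z}^d}$ coincides with the orthonormal basis $(e_n)$ and is trivially uniformly invertible. Since $\mathrm{SINC}(\pi n) = \delta_{n,0}$ on $\mathbb{Z}^d$, the unique polynomial of coordinate degree at most $2\ell$ interpolating these values on $C_{\ell,d}$ is $\Psi_\ell = Q_{d,\ell}$, giving
\begin{equation*}
I_{\mathrm{SINC}\pi(\cdot),\ell}(t) = Q_{d,\ell}(t)\exp\!\left(-\sum_{k=1}^{N}\frac{\|t\|_{2k}^{2k}}{k(2k-1)(\ell+1/2)^{2k-1}}\right).
\end{equation*}
Since $\|t\|_{2k}^{2k} = \sum_j t(j)^{2k}$, this factors as $I_{\mathrm{SINC}\pi(\cdot),\ell}(t) = \prod_j J(t(j))$, where $J(s) := Q_{1,\ell}(s)\exp\!\left(-\sum_{k=1}^{N}\frac{s^{2k}}{k(2k-1)(\ell+1/2)^{2k-1}}\right)$.

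The Weierstrass product $\mathrm{sinc}(\pi s) = \prod_{k=1}^{\infty}(1-s^2/k^2)$ (valid for $|s| < \ell+1$) together with Proposition \ref{blip5} applied to $t\mapsto t^{-2k}$ yields the one-dimensional estimate
\begin{equation*}
\log\!\frac{J(s)}{\mathrm{sinc}(\pi s)} \leq \frac{s^{2(N+1)}}{(N+1)(2N+1)(\ell+1/2)^{2N+1}}(1+o(1))
\end{equation*}
uniformly for $|s|/(\ell+1/2)\to 0$: the expansion of $-\log P$ (with $P(s) = \prod_{k>\ell}(1-s^2/k^2)$) gives terms $\frac{s^{2m}}{m}S_{\ell,m}$ with $S_{\ell,m} := \sum_{k>\ell} k^{-2m}$, Proposition \ref{blip5} makes the deviation of $S_{\ell,k}$ from $\frac{1}{(2k-1)(\ell+1/2)^{2k-1}}$ of order $O((\ell+1/2)^{-2k-1})$ (so the sum for $k \leq N$ cancels $R_{1D}$ up to negligible error), and the $m = N+1$ term dominates the remaining infinite tail.

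For the $L_\infty$ statement on $[-D_{\ell,N},D_{\ell,N}]^d$: the constant $\tfrac12$ in $D_{\ell,N}^{2(N+1)}$ is chosen precisely so that at $s = D_{\ell,N}$ the above bound reads $\log(J/\mathrm{sinc}) \leq \frac{2N+1}{2(N+1)}\log(\ell+1/2)(1+o(1))$, whence $|J(D_{\ell,N})| \leq |\mathrm{sinc}(\pi D_{\ell,N})|(\ell+1/2)^{(2N+1)/(2(N+1))}(1+o(1))$. Combined with $|\mathrm{sinc}(\pi D_{\ell,N})| \leq (\pi D_{\ell,N})^{-1}$, the $(\ell+1/2)$-powers cancel, leaving $|J(D_{\ell,N})|$ and $|\mathrm{sinc}(\pi D_{\ell,N})|$ both of order $(\log(\ell+1/2))^{-1/(2(N+1))}$. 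Taking $d$-fold products, and noting that the sup of $|\mathrm{SINC}-I|$ over the cube is essentially attained at its corners, gives $\|\mathrm{SINC}-I\|_{L_\infty([-D_{\ell,N},D_{\ell,N}]^d)} = O((\log(\ell+1/2))^{-d/(2(N+1))}) \to 0$, establishing (\ref{sinctheorem2}).

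For the $L_2$ statement on the smaller cube, the factoring yields $\|\mathrm{SINC}-I\|_{L_2([-C_{\ell,N},C_{\ell,N}]^d)}^2 = A^d + B^d - 2M^d$, where $A := \int_{-C_{\ell,N}}^{C_{\ell,N}}\mathrm{sinc}^2(\pi s)\,ds$, $B := \int J^2$, $M := \int \mathrm{sinc}(\pi s)J(s)\,ds$. Since $A+B-2M = \|\mathrm{sinc}(\pi\cdot)-J\|_{L_2([-C_{\ell,N},C_{\ell,N}])}^2 =: \epsilon^2$ and $A,B,M$ all converge to the common limit $\|\mathrm{sinc}(\pi\cdot)\|_{L_2(\mathbb{R})}^2$, a Taylor expansion gives $A^d+B^d-2M^d = O_d(\epsilon^2)$, reducing the problem to the 1-D bound $\epsilon \to 0$. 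Splitting the 1-D integral at $|s| = 1$, the small-$s$ contribution is $O((\ell+1/2)^{-2})$ by Taylor estimates, while for $|s| \in [1, C_{\ell,N}]$ one uses $|\mathrm{sinc}(\pi s)|^2 \leq (\pi s)^{-2}$ and carries out a Laplace-type analysis of $\int s^{-2}\exp\!\bigl(\tfrac{2s^{2(N+1)}}{(N+1)(2N+1)(\ell+1/2)^{2N+1}}\bigr)ds$: the quartered constant $\tfrac14$ in $C_{\ell,N}^{2(N+1)}$ is exactly what makes the peak value at $s = C_{\ell,N}$, the Laplace width $\sim (\log(\ell+1/2))^{-1}$, and the decay of $\mathrm{sinc}^2$ combine to an $o(1)$ quantity. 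The main obstacle is this 1-D Laplace-type estimate, balancing the super-exponential growth of the weight, the quadratic decay of $\mathrm{sinc}^2$, and the measure; the remaining steps are a careful extension of the estimates used to prove Lemma \ref{techlem1} beyond that lemma's original range of validity.
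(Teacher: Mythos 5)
Your route --- specialize to $t_n = n$ so that $\Psi_\ell = Q_{d,\ell}$, factor $I_{\mathrm{SINC},\ell}$ as a $d$-fold tensor product of the one-variable function $J$, obtain a one-dimensional estimate on $\log(J/\mathrm{sinc})$ from Proposition~\ref{blip5}, then lift to dimension $d$ --- is sound in outline and genuinely different in structure from the paper's. The paper avoids the tensor expansion by inserting the intermediate term $e^{\|t\|_{2(N+1)}^{2(N+1)}/((N+1)(2N+1)(\ell+1/2)^{2N+1})}\mathrm{SINC}(\pi t)$ and applying the triangle inequality twice (Propositions~\ref{auxprop1}--\ref{blip6}), whereas you exploit the product structure directly. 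Your $L_2$ reduction via $\|\mathrm{SINC}-I\|_{L_2}^2 = A^d + B^d - 2M^d$, combined with $A+B-2M = \epsilon^2$ and $|A-M|,|B-M| = O(\epsilon)$ by Cauchy--Schwarz, is a clean alternative decomposition; you are also right that the remaining one-dimensional statement $\epsilon \to 0$ is a Laplace-type estimate, but this is precisely the content the paper's Propositions~\ref{auxprop1} and~\ref{auxprop2} carry out explicitly after the substitution $t = \alpha C_{\ell,N}$, and your sketch leaves it as a plan rather than a proof.

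The $L_\infty$ argument, however, contains a genuine error. You assert that the supremum of $|\mathrm{SINC}-I|$ over $[-D_{\ell,N},D_{\ell,N}]^d$ is essentially attained at the corners, yielding the rate $O\big((\log(\ell+1/2))^{-d/(2(N+1))}\big)$. This is false for $d\geq 2$: at the edge point $t=(D_{\ell,N},0,\dots,0)$, since $\mathrm{sinc}(0)=J(0)=1$, the difference equals $|\mathrm{sinc}(\pi D_{\ell,N})-J(D_{\ell,N})|$, which is generically of order $(\log(\ell+1/2))^{-1/(2(N+1))}$ --- strictly larger than the claimed corner rate. The correct bound is $O\big((\log(\ell+1/2))^{-1/(2(N+1))}\big)$ uniformly in $d$, obtainable from the telescoping identity
\begin{equation*}
\prod_{j} s_j - \prod_{j} J_j \ =\ \sum_{i=1}^{d}\Big(\prod_{j<i} J_j\Big)(s_i-J_i)\Big(\prod_{j>i} s_j\Big)
\end{equation*}
once one verifies $\sup_{|s|\leq D_{\ell,N}}|J(s)| = O(1)$ --- a bound your argument needs but does not supply (it follows from the log-convexity of $s\mapsto e^{g(s)}/s$ on $[1,D_{\ell,N}]$ and checking both endpoints, but it is not automatic). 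You also estimate $|\mathrm{sinc}-J|$ only at the single point $s=D_{\ell,N}$ rather than controlling the supremum over the full interval; the paper's Proposition~\ref{auxprop3} shows that this one-dimensional supremum requires separate treatment of the regimes $\alpha\in[0,1/2]$ and $\alpha\in[1/2,1]$ after substituting $s = \alpha D_{\ell,N}$. The conclusion of the proposition is unaffected, but these gaps must be filled before the argument stands.
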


\noindent The proof of equation (\ref{sinctheorem1}) requires the following two propositions.

\begin{prop}\label{auxprop1}
\begin{equation}\label{firstbit}
\lim_{\ell \rightarrow \infty} \Big\|\Big( e^{ \frac{\| t \|_{2(N+1)}^{2(N+1)}}{(\ell+1/2)^{2N+1}(N+1)(2N+1)}}-1\Big) \mathrm{SINC}(\pi t) \Big\|_{L_2([-C_{\ell,N},C_{\ell,N}]^d)} = 0.
\end{equation}
\end{prop}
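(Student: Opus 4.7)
The plan is to exploit the product structure of both the region $[-C_{\ell,N},C_{\ell,N}]^d$ and the integrand. Setting $g_\ell(s) := \exp\bigl(|s|^{2(N+1)}/((\ell+1/2)^{2N+1}(N+1)(2N+1))\bigr)$, one has $e^{\|t\|_{2(N+1)}^{2(N+1)}/A} = \prod_{k=1}^d g_\ell(t(k))$ (where $A$ is the obvious denominator) and $\mathrm{SINC}(\pi t) = \prod_{k=1}^d \mathrm{sinc}(\pi t(k))$, so the integrand factors as $\bigl(\prod_k g_\ell(t(k)) - 1\bigr)^2 \prod_k \mathrm{sinc}^2(\pi t(k))$. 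Expanding $(\cdot-1)^2$ and applying Fubini yields
\begin{equation*}
\Bigl\|\,\cdot\,\Bigr\|_{L_2([-C_{\ell,N},C_{\ell,N}]^d)}^2 \;=\; \bigl(F_\ell^{(2)}\bigr)^d - 2\bigl(F_\ell^{(1)}\bigr)^d + \bigl(F_\ell^{(0)}\bigr)^d, \qquad F_\ell^{(\alpha)} := \int_{-C_{\ell,N}}^{C_{\ell,N}} g_\ell(s)^\alpha \mathrm{sinc}^2(\pi s)\,ds.
\end{equation*}
It therefore suffices to show $F_\ell^{(\alpha)} \to 1$ for each $\alpha \in \{0,1,2\}$; the right side then tends to $1 - 2 + 1 = 0$.

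The case $\alpha = 0$ is immediate from $C_{\ell,N} \to \infty$ and $\int_\mathbb{R}\mathrm{sinc}^2(\pi s)\,ds = 1$. For $\alpha \in \{1,2\}$, monotonicity $g_\ell^\alpha \geq 1$ forces $F_\ell^{(\alpha)} \geq F_\ell^{(0)}$, so only the matching upper bound is needed, i.e.\ $\int_{-C_{\ell,N}}^{C_{\ell,N}}(g_\ell^\alpha - 1)\mathrm{sinc}^2(\pi s)\,ds = o(1)$. On $|s|\leq 1$ the exponent in $g_\ell$ is $O((\ell+1/2)^{-(2N+1)})$ uniformly, so $g_\ell^\alpha - 1 \to 0$ uniformly and that contribution is $o(1)$.

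For $1 \leq |s| \leq C_{\ell,N}$, bound $\mathrm{sinc}^2(\pi s) \leq 1/(\pi s)^2$ and substitute $u = c_\alpha s^{2(N+1)}/(\ell+1/2)^{2N+1}$ with $c_\alpha := \alpha/((N+1)(2N+1))$. A direct calculation recasts the integral as
\begin{equation*}
\int_1^{C_{\ell,N}}\frac{g_\ell(s)^\alpha}{s^2}\,ds \;=\; \frac{c_\alpha^{1/(2(N+1))}}{2(N+1)(\ell+1/2)^{(2N+1)/(2(N+1))}}\int_{u_0}^{U_\ell}\frac{e^u}{u^{(2N+3)/(2(N+1))}}\,du,
\end{equation*}
with $U_\ell = \alpha(2N+1)\log(\ell+1/2)/(4(N+1))$. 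The elementary asymptotic $\int^U e^u u^{-\beta}\,du \sim e^U/U^\beta$ (one integration by parts) then shows the bound is of order
\begin{equation*}
\frac{(\ell+1/2)^{(2N+1)(\alpha-2)/(4(N+1))}}{\bigl(\log(\ell+1/2)\bigr)^{(2N+3)/(2(N+1))}},
\end{equation*}
which tends to zero for both $\alpha = 1$ (polynomial decay) and $\alpha = 2$ (logarithmic decay).

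The delicate case is $\alpha = 2$: the polynomial factor collapses to $(\ell+1/2)^0 = 1$, and the estimate survives only thanks to the logarithmic denominator coming from $U_\ell$. This is precisely why $C_{\ell,N}$ carries the $\log(\ell+1/2)^{1/(2(N+1))}$ correction; the constant $\tfrac{1}{4}$ in the definition of $C_{\ell,N}^{2(N+1)}$ is exactly the threshold at which the $e^{U_\ell}$-growth is balanced by the $(\ell+1/2)^{-(2N+1)/(2(N+1))}$ prefactor, and any larger constant would break the cancellation.
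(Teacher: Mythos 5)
Your approach is genuinely different from the paper's. The paper changes variable $t = \alpha C_{\ell,N}$ over the whole $d$-dimensional cube and pushes through a scaling estimate; you exploit the coordinate-wise product structure of $\mathrm{SINC}$ and of $\exp(\|t\|_{2(N+1)}^{2(N+1)}/A)$, expand $(\prod_k g_\ell(t(k)) - 1)^2$, and apply Fubini to reduce everything to a single one-dimensional quantity $F_\ell^{(\alpha)}$. That reduction is correct, as is the reduction (by monotonicity of $g_\ell^\alpha \geq 1$ and $F_\ell^{(0)} \to 1$) to showing $\int_{-C_{\ell,N}}^{C_{\ell,N}} (g_\ell^\alpha - 1)\,\mathrm{sinc}^2(\pi s)\,ds = o(1)$, and the treatment of $|s|\le 1$ is fine. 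The approach is cleaner and more informative than the paper's, but the execution of the large-$|s|$ estimate has a genuine gap.

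The problem is that you drop the $-1$ and compute $\int_1^{C_{\ell,N}} g_\ell(s)^\alpha/s^2\,ds$, then assert this is of the order $(\ell+1/2)^{(2N+1)(\alpha-2)/(4(N+1))} / (\log(\ell+1/2))^{(2N+3)/(2(N+1))}$ and hence $o(1)$. This is false: since $g_\ell^\alpha \geq 1$ and $\int_1^{C_{\ell,N}} s^{-2}\,ds \to 1$, that integral tends to $1$, not $0$. The flaw is that the asymptotic $\int^U e^u u^{-\beta}\,du \sim e^U U^{-\beta}$ governs only the contribution from the upper endpoint; your lower endpoint $u_0 = c_\alpha (\ell+1/2)^{-(2N+1)} \to 0$ is \emph{also} singular because $\beta = (2N+3)/(2N+2) > 1$. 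Indeed $\int_{u_0}^1 u^{-\beta}\,du \sim c_\alpha^{1-\beta}(\ell+1/2)^{(2N+1)/(2N+2)}/(\beta-1)$, and since $\beta - 1 = 1/(2N+2)$ this exactly cancels the prefactor $c_\alpha^{1/(2N+2)}/\big(2(N+1)(\ell+1/2)^{(2N+1)/(2N+2)}\big)$, leaving a contribution that tends to a positive constant (in fact to $1$).

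The repair is to keep the $-1$: the same substitution turns $\int_1^{C_{\ell,N}}(g_\ell^\alpha - 1)/s^2\,ds$ into the identical prefactor times $\int_{u_0}^{U_\ell}(e^u - 1)/u^\beta\,du$. Near $u = 0$ one has $e^u - 1 \leq e\,u$, and $1 - \beta = -1/(2N+2) > -1$, so $\int_{u_0}^1 (e^u-1)/u^\beta\,du$ is $O(1)$ uniformly in $\ell$; multiplied by the prefactor $(\ell+1/2)^{-(2N+1)/(2(N+1))}$, the lower-end contribution vanishes. The upper-end contribution is the one you estimated, and it does tend to $0$ for $\alpha \in \{1,2\}$, with $\alpha = 2$ surviving only through the $\log$ in $C_{\ell,N}$ exactly as you note. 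With this correction the argument closes.
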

\begin{proof}[Proof of Proposition \ref{auxprop1}]
Let $t = \alpha C_{\ell,N}$ where $\alpha \in [-1,1]^d$. Noting that $$e^{ \frac{\| t \|_{2(N+1)}^{2(N+1)}}{(\ell+1/2)^{2N+1}(N+1)(2N+1)}} = \Big( \ell+\frac{1}{2} \Big)^{\frac{2N+1}{4(N+1)}\|\alpha \|_{2(N+1)}^{2(N+1)}},$$ the quantity in equation (\ref{firstbit}) becomes
\begin{eqnarray}
& &\bigg(\int_{[-C_{\ell,N},C_{\ell,N}]^d} \bigg|\Big(\Big( \ell+\frac{1}{2} \Big)^{\frac{2N+1}{4(N+1)}\|\alpha \|_{2(N+1)}^{2(N+1)}}-1\Big) \mathrm{SINC}(\pi t)\bigg|^2 dt \bigg)^{1/2}\nonumber\\
& &\quad\leq\frac{1}{C_{\ell,N}^{d/2}}\bigg( \int_{[-1,1]^d} \bigg|\Big( \ell+\frac{1}{2} \Big)^{\frac{2N+1}{4(N+1)}\|\alpha \|_{2(N+1)}^{2(N+1)}}-1\bigg|^2 d \alpha \bigg)^{1/2} \nonumber\\
& &\quad\leq \frac{2^{d/2} \Big( \ell+\frac{1}{2} \Big)^{d\frac{2N+1}{4(N+1)}} +2^{d/2}}{\big(\frac{1}{4}(2N+1)^2\big)^\frac{d}{4(N+1)}\big(\log(\ell+1/2) \big)^\frac{d}{4(N+1)}(\ell+1/2)^{d\frac{2N+1}{4(N+1)}}}.\nonumber
\end{eqnarray}
The last term in the above inequality has limit $0$ as $\ell \rightarrow \infty$.  This proves the proposition.
\end{proof}

\begin{prop}\label{auxprop2}
\begin{eqnarray}
\lim_{\ell \rightarrow \infty} \Big\| Q_{d,\ell}(t) e^{-\!\sum_{k=1}^N \frac{1}{k(2k-1)}\frac{\| t \|_{2k}^{2k}}{(\ell+1/2)^{2k-1}}}\!\!-\!e^{ \frac{\| t \|_{2(N+1)}^{2(N+1)}}{(\ell+1/2)^{2N+1}(N+1)(2N+1)}}\mathrm{SINC}(\pi t)  \Big\|_{L_2([-C_{\ell,N},C_{\ell,N}]^d)}\!=\!0.\nonumber
\end{eqnarray}
\end{prop}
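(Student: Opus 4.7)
The plan is to mirror the proof of Lemma~\ref{techlem1}, tracking how the constants depend on the cutoff size, and then to reduce the resulting $L_2$ estimate to the already-established Proposition~\ref{auxprop1}. First I would revisit inequality~(\ref{blork3}). Taking the product over the $d$ coordinates gives, for every $t\in[-C_{\ell,N},C_{\ell,N}]^d$ (which for large $\ell$ lies well inside $|t_i|<\ell+1/2$, since $C_{\ell,N}=o(\ell)$),
\begin{equation*}
\frac{Q_{d,\ell}(t)\,\exp\!\bigl(-\sum_{k=1}^{N}\frac{1}{k(2k-1)}\frac{\|t\|_{2k}^{2k}}{(\ell+1/2)^{2k-1}}\bigr)}{\mathrm{SINC}(\pi t)}=\exp\!\bigl(E(t)+\theta_\ell(t)\bigr),
\end{equation*}
where $E(t)=\frac{\|t\|_{2(N+1)}^{2(N+1)}}{(\ell+1/2)^{2N+1}(N+1)(2N+1)}$ and $\theta_\ell(t)$ collects the tail $\sum_{k\ge N+2}$ of the series in (\ref{logstuff}) together with the small lower-bound correction appearing in (\ref{blork3}).

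Next I would show that $\theta_\ell$ is uniformly small. With $A_\ell:=C_{\ell,N}/(\ell+1/2)^{(2N+1)/(2N+2)}$ one computes $A_\ell^{2(N+1)}=\frac{(2N+1)^2}{4}\log(\ell+1/2)$, and a geometric-series estimate of exactly the same form as in the proof of Lemma~\ref{techlem1} yields
\begin{equation*}
\sup_{t\in[-C_{\ell,N},C_{\ell,N}]^d}|\theta_\ell(t)|=O\!\left(\frac{(\log(\ell+1/2))^{(N+2)/(N+1)}}{(\ell+1/2)^{1/(N+1)}}\right)=:\rho_\ell\longrightarrow0.
\end{equation*}
Factoring the difference as
\begin{equation*}
Q_{d,\ell}(t)\,e^{-\sum_{k=1}^{N}\cdots}-e^{E(t)}\mathrm{SINC}(\pi t)=e^{E(t)}\mathrm{SINC}(\pi t)\bigl(e^{\theta_\ell(t)}-1\bigr),
\end{equation*}
and using $|e^{\theta_\ell(t)}-1|\le 2|\theta_\ell(t)|\le 2\rho_\ell$ for $\ell$ sufficiently large, the $L_2$ norm obeys
\begin{equation*}
\bigl\|Q_{d,\ell}\,e^{-\sum_{k=1}^{N}\cdots}-e^{E}\mathrm{SINC}(\pi\cdot)\bigr\|_{L_2([-C_{\ell,N},C_{\ell,N}]^d)}\le 2\rho_\ell\,\|e^{E}\mathrm{SINC}(\pi\cdot)\|_{L_2([-C_{\ell,N},C_{\ell,N}]^d)}.
\end{equation*}

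Finally, the triangle inequality together with Proposition~\ref{auxprop1} gives
\begin{equation*}
\|e^{E}\mathrm{SINC}(\pi\cdot)\|_{L_2([-C_{\ell,N},C_{\ell,N}]^d)}\le\|(e^{E}-1)\mathrm{SINC}(\pi\cdot)\|_{L_2([-C_{\ell,N},C_{\ell,N}]^d)}+\|\mathrm{SINC}(\pi\cdot)\|_{L_2(\mathbb{R}^d)}=o(1)+O(1),
\end{equation*}
so the right-hand side of the previous display is $O(\rho_\ell)\to 0$, which is exactly the conclusion of the proposition.

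The main obstacle is the uniform tail bound on $\theta_\ell$. In Lemma~\ref{techlem1} the factor $\exp\!\bigl(dA^{2(N+1)}/((N+1)(2N+1))\bigr)$ was harmlessly absorbed into the constant $M$, but here $A_\ell\to\infty$ and this factor grows like a positive power of $\ell$. The delicate point is that the $k=N+1$ term of the residual series has been deliberately subtracted and placed into $E(t)$, so only the faster-decaying tail $k\ge N+2$ remains inside $\theta_\ell$; that tail carries an extra factor $(\ell+1/2)^{-1/(N+1)}$ which beats the logarithmic growth of $A_\ell$ and yields $\rho_\ell\to 0$. Once this pointwise bound is secured, the $L_2$ step follows almost for free from Proposition~\ref{auxprop1}.
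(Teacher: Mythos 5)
Your proof is correct. Up to the point where you exhibit the ratio as $\exp(E(t)+\theta_\ell(t))$ and show $\sup_{[-C_{\ell,N},C_{\ell,N}]^d}|\theta_\ell| = O\bigl((\log(\ell+1/2))^{(N+2)/(N+1)}(\ell+1/2)^{-1/(N+1)}\bigr)$, your argument tracks the paper's (both rest on inequality (\ref{blork3}) specialized to the shifted scaling $t=\alpha C_{\ell,N}$, with the $k=N+1$ term deliberately peeled off into $E(t)$ and the residual tail and lower-bound correction shown to vanish). The place where you genuinely diverge is the passage from the pointwise bound to the $L_2$ estimate. The paper substitutes $t=\alpha C_{\ell,N}$, picks up a Jacobian factor $C_{\ell,N}^{d/2}$, and performs a somewhat delicate direct computation in which the growth of $e^{E}=(\ell+1/2)^{\frac{2N+1}{4(N+1)}\|\alpha\|_{2(N+1)}^{2(N+1)}}$ is cancelled against the decay of $\mathrm{SINC}$ and the Jacobian, yielding an explicit rate $(\log(\ell+1/2))^{(N+2)/(N+1)}(\ell+1/2)^{-1/(N+1)}(\log(\ell+1/2))^{-d/(4(N+1))}$. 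You instead factor the difference as $e^{E}\mathrm{SINC}\cdot(e^{\theta_\ell}-1)$, pull out $\|e^{\theta_\ell}-1\|_{L_\infty}=O(\rho_\ell)$, and observe that $\|e^{E}\mathrm{SINC}\|_{L_2([-C_{\ell,N},C_{\ell,N}]^d)}$ is already known to be $O(1)$ by the triangle inequality and Proposition~\ref{auxprop1}. This reuse is cleaner and avoids redoing the change-of-variables integral, at the modest cost of a slightly weaker final rate (you lose the $(\log)^{-d/(4(N+1))}$ factor), which is immaterial since only $\to 0$ is needed. Both approaches are sound; yours is the more economical.
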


\begin{proof}[Proof of Proposition \ref{auxprop2}]
If $t \in \mathbb{R}^d$ and $\|t \|_\infty <\ell+1/2$, then equation (\ref{blork3}) implies
\begin{eqnarray}\label{needsone}
& & \bigg(e^{\sum_{k=N+1}^\infty \frac{1}{k(2k-1)}\frac{\|t\|_{2k}^{2k}}{(\ell+1/2)^{2k-1}}} \bigg)\prod_{k=1}^d e^{\bigg(-\frac{1}{2(\ell+1/2)}\frac{\big( \frac{t}{\ell+1/2} \big)^2}{1-\big(\frac{t}{\ell+1/2} \big)^2}\bigg)}\\
& &\quad \leq\frac{Q_{d,\ell} (t)e^{-\sum_{k=1}^N \frac{1}{k(2k-1)}\frac{\|t\|_{2k}^{2k}}{(\ell+1/2)^{2k-1}}}}{\mathrm{SINC}(\pi t)} \leq  e^{\sum_{k=N+1}^\infty \frac{1}{k(2k-1)}\frac{\|t\|_{2k}^{2k}}{(\ell+1/2)^{2k-1}}}.\nonumber
\end{eqnarray}
Let $t \in [-C_{\ell,N},C_{\ell,N}]^d$ where $t= \alpha C_{\ell,N}$, $\alpha \in [-1,1]$.  Consider the right hand side of inequality (\ref{needsone}) for such $t$.
\begin{eqnarray}
& & e^{\sum_{k=N+1}^\infty \frac{1}{k(2k-1)}\frac{\|t\|_{2k}^{2k}}{(\ell+1/2)^{2k-1}}} \leq \Big( \ell+\frac{1}{2} \Big)^{\frac{2N+1}{4(N+1)}\|\alpha \|_{2(N+1)}^{2(N+1)}} e^{(\ell+1/2) O \Big( \big\| \frac{t}{\ell+1/2} \big\|_{2(N+2)}^{2(N+2)} \Big)}\\
& &\quad\leq \Big( \ell+\frac{1}{2} \Big)^{\frac{2N+1}{4(N+1)}\|\alpha \|_{2(N+1)}^{2(N+1)}} e^{M (\ell+1/2)^{-\frac{1}{N+1}} (\log(\ell+1/2) )^\frac{N+2}{N+1} \| \alpha \|_{2(N+2)}^{2(N+2)} }.\nonumber
\end{eqnarray}
for some constant $M$. Noting that $$ \frac{t^2}{(\ell+1/2)^3} = \frac{\| \alpha \|_2^2 \big(\frac{1}{4}(2N+1)^2\big)^\frac{1}{N+1}\big( \log(\ell+1/2)\big)^\frac{1}{N+1}}{(\ell+1/2)^\frac{N+2}{N+1}},$$ we can bound the left hand side of inequality (\ref{needsone}) from below as follows:
\begin{eqnarray}\label{blahblah}
& & e^{\Big(-m  \frac{\| \alpha \|_2^2 \big( \log(\ell+1/2)\big)^\frac{1}{N+1}}{(\ell+1/2)^\frac{N+2}{N+1}} \Big)} \Big( \ell+\frac{1}{2} \Big)^{\frac{2N+1}{4(N+1)}\|\alpha \|_{2(N+1)}^{2(N+1)}}\\
& &\quad\leq \bigg(e^{\sum_{k=N+1}^\infty \frac{1}{k(2k-1)}\frac{\|t\|_{2k}^{2k}}{(\ell+1/2)^{2k-1}}} \bigg)\prod_{k=1}^d e^{\bigg(-\frac{1}{2(\ell+1/2)}\frac{\big( \frac{t}{\ell+1/2} \big)^2}{1-\big(\frac{t}{\ell+1/2} \big)^2}\bigg)},\nonumber
\end{eqnarray}
where $m>0$ is chosen independently of $\ell$.  Relations (\ref{needsone}) through (\ref{blahblah}) imply
\begin{eqnarray}
& & \bigg( e^{\Big(-m  \frac{(\log(\ell+1/2))^\frac{1}{N+1}\| \alpha \|_2^2}{(\ell+1/2)^\frac{N+2}{N+1}} \Big)}-1\bigg) \Big( \ell+\frac{1}{2} \Big)^{\frac{2N+1}{4(N+1)}\|\alpha \|_{2(N+1)}^{2(N+1)}} |\mathrm{SINC}(\pi t)| \nonumber\\
& &\quad\leq \bigg| Q_{d,\ell}(t) e^{-\sum_{k=1}^N \frac{1}{k(2k-1)}\frac{\| t \|_{2k}^{2k}}{(\ell+1/2)^{2k-1}}}-e^{ \frac{\| t \|_{2(N+1)}^{2(N+1)}}{(\ell+1/2)^{2N+1}(N+1)(2N+1)}}\mathrm{SINC}(\pi t)  \bigg| \nonumber\\
& &\quad\leq \bigg( e^{\Big(M \frac{ (\log(\ell+1/2) )^\frac{N+2}{N+1} \| \alpha \|_{2(N+2)}^{2(N+2)}}{(\ell+1/2)^\frac{1}{N+1}}\Big)}-1 \bigg) \Big( \ell+\frac{1}{2} \Big)^{\frac{2N+1}{4(N+1)}\|\alpha \|_{2(N+1)}^{2(N+1)}}|\mathrm{SINC}(\pi t)|.\nonumber
\end{eqnarray}
Further simplification implies (for appropriate constants $C$, $C'$, and $C''$) that
\begin{eqnarray}
& & \Big\| Q_{d,\ell}(t) e^{-\sum_{k=1}^N \frac{1}{k(2k-1)}\frac{\| t \|_{2k}^{2k}}{(\ell+1/2)^{2k-1}}}-e^{ \frac{\| t \|_{2(N+1)}^{2(N+1)}}{(\ell+1/2)^{2N+1}(N+1)(2N+1)}}\mathrm{SINC}(\pi t)  \Big\|_{L_2([-C_{\ell,N},C_{\ell,N}]^d)} \nonumber\\
& &\quad\leq C\frac{(\log(\ell+1/2))^\frac{N+2}{N+1}}{(\ell+1/2)^\frac{1}{N+1}} \Bigg(\int_{[-C_{\ell,N},C_{\ell,N}]^d} \Bigg| \Big( \ell+\frac{1}{2} \Big)^{\frac{2N+1}{4(N+1)}\|\alpha \|_{2(N+1)}^{2(N+1)}} \| \alpha \|_2^2 \mathrm{SINC}(\pi t)  \Bigg|^2  dt \Bigg)^{1/2}\nonumber\\
& &\quad =C'\frac{(\log(\ell+1/2))^\frac{N+2}{N+1}}{(\ell+1/2)^\frac{1}{N+1}} \Bigg(\int_{[-1,1]^d} \frac{\Bigg| \Big( \ell+\frac{1}{2} \Big)^{\frac{2N+1}{4(N+1)}\|\alpha \|_{2(N+1)}^{2(N+1)}} \| \alpha \|_2^2 \mathrm{SINC}(\pi t)  \Bigg|^2}{(\log(\ell+1/2))^\frac{d}{2(N+1)}\big(\ell+\frac{1}{2} \big)^{\frac{2N+1}{2(N+1)}d}}  d\alpha \Bigg)^{1/2}\nonumber\\
& &\quad\leq  C''\frac{(\log(\ell+1/2))^\frac{N+2}{N+1}}{(\ell+1/2)^\frac{1}{N+1} (\log(\ell+1/2))^\frac{d}{4(N+1)}},\nonumber
\end{eqnarray}
after the change in variable $t = \alpha C_{\ell,N}$ and simple estimates.  This proves the proposition.
\end{proof}

\begin{proof}[Proof of equation (\ref{sinctheorem1})]
Equation (\ref{sinctheorem1}) follows immediately from Propositions \ref{auxprop1} and \ref{auxprop2}.
\end{proof}

\noindent The proof of equation (\ref{sinctheorem2}) requires the following two propositions.

\begin{prop}\label{auxprop3}
\begin{equation}\label{thirdbit}
\lim_{\ell \rightarrow \infty} \Big\|\Big( e^{ \frac{ t^{2(N+1)}}{(\ell+1/2)^{2N+1}(N+1)(2N+1)}}-1\Big) \mathrm{sinc}(\pi t) \Big\|_{L_\infty [-D_{\ell,N},D_{\ell,N}]} = 0.
\end{equation}
\end{prop}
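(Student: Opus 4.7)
\noindent The plan is to dominate the integrand pointwise by an increasing function of $|t|$ on $[0, D_{\ell,N}]$ whose value at the right endpoint tends to zero. Set $p(t) := t^{2(N+1)}/\big((\ell+1/2)^{2N+1}(N+1)(2N+1)\big)$ for the exponent appearing in the integrand. Since $|\sin(\pi t)| \leq 1$, for $t \neq 0$ we have
$$\big|(e^{p(t)} - 1)\,\mathrm{sinc}(\pi t)\big| \leq \frac{e^{p(t)} - 1}{\pi |t|} =: h(|t|),$$
and the left-hand side vanishes at $t = 0$. Hence it suffices to show $\sup_{0 < t \leq D_{\ell,N}} h(t) \to 0$ as $\ell \to \infty$.

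\noindent The next step is to show that $h$ is strictly increasing on $(0, \infty)$, which reduces the supremum to $h(D_{\ell,N})$. A direct differentiation, combined with the homogeneity identity $t\,p'(t) = 2(N+1)\,p(t)$, gives $h'(t) > 0$ if and only if $2(N+1)\,p(t)\,e^{p(t)} > e^{p(t)} - 1$, equivalently $\phi(u) := 2(N+1) u - (1 - e^{-u}) > 0$ at $u = p(t)$. The auxiliary function $\phi$ satisfies $\phi(0) = 0$ and $\phi'(u) = 2(N+1) - e^{-u} \geq 2N+1 > 0$ for $u \geq 0$, so $\phi > 0$ on $(0, \infty)$ and the monotonicity of $h$ follows.

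\noindent Finally, I would evaluate at the endpoint. From the definition of $D_{\ell,N}$ one gets $p(D_{\ell,N}) = \tfrac{2N+1}{2(N+1)} \log(\ell+1/2)$ and hence $e^{p(D_{\ell,N})} = (\ell+1/2)^{(2N+1)/(2(N+1))}$, yielding
$$h(D_{\ell,N}) = \frac{(\ell+1/2)^{(2N+1)/(2(N+1))} - 1}{\pi \big(\tfrac{1}{2}(2N+1)^2\big)^{1/(2(N+1))}(\ell+1/2)^{(2N+1)/(2(N+1))}(\log(\ell+1/2))^{1/(2(N+1))}},$$
which decays like $(\log(\ell+1/2))^{-1/(2(N+1))}$ and tends to $0$. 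The only delicate point is the monotonicity of $h$: naive splittings of $[-D_{\ell,N}, D_{\ell,N}]$ into a ``near zero'' piece (using $|\mathrm{sinc}| \leq 1$) and a ``bounded away from zero'' piece (using $|\mathrm{sinc}(\pi t)| \leq 1/(\pi|t|)$) are in fact doomed because the two resulting bounds require incompatible choices of threshold at scale $D_{\ell,N}$. The monotonicity of $h$ bypasses this issue and reduces everything to the single endpoint computation above.
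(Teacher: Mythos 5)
Your proof is correct, and it takes a genuinely different (and cleaner) route than the paper's. The paper substitutes $t = \alpha D_{\ell,N}$, uses $|\mathrm{sinc}(\pi t)|\le 1/(\pi|t|)$ throughout, and then handles $\alpha\in[1/2,1]$ and $\alpha\in(0,1/2]$ separately — the first by discarding the $-1$, the second by a Mean Value Theorem estimate on $(\ell+1/2)^{\alpha^{2(N+1)}\frac{2N+1}{2(N+1)}}-1$ that exploits the high-order vanishing of $p$ at the origin. You instead observe that the Eulerian identity $tp'(t)=2(N+1)p(t)$ makes the dominating function $h(t)=(e^{p(t)}-1)/(\pi t)$ monotone increasing, which collapses the whole supremum to the single endpoint evaluation $h(D_{\ell,N})$; all steps check out, including the sign analysis via $\phi(u)=2(N+1)u-(1-e^{-u})$ and the final $(\log(\ell+1/2))^{-1/(2(N+1))}$ decay. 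The monotonicity argument is tidier and avoids the case split entirely. One minor quibble: your closing remark that ``naive splittings\dots are in fact doomed'' correctly diagnoses the split that applies $|\mathrm{sinc}|\le 1$ near the origin and $|\mathrm{sinc}|\le 1/(\pi|t|)$ farther out, but that is not what the paper does — the paper uses the $1/(\pi|t|)$ bound everywhere and splits on how to estimate the exponential, which does succeed. So the comment, while true of the split you describe, should not be read as implying a split-based proof is impossible.
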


\begin{proof}[Proof of Proposition \ref{auxprop3}]
Let $t \in [-D_{\ell,N},D_{\ell,N}]$, then $t = \alpha D_{\ell,N}$ for $\alpha \in [-1,1]$. Simplification shows that equation (\ref{thirdbit}) holds if
\begin{equation}\label{sam}
\lim_{\ell \rightarrow \infty} \sup_{\alpha \in [0,1]} \bigg| \frac{(\ell+1/2)^{\alpha^{2(N+1)}\frac{2N+1}{2(N+1)}}-1}{\alpha \big(\log(\ell+1/2)\big)^\frac{1}{2(N+1)} (\ell+1/2)^\frac{2N+1}{2(N+1)}} \bigg| = 0.
\end{equation}
Note that for large $\ell$,
\begin{equation}\label{sam2}
\sup_{\alpha \in [1/2,1]} \Bigg| \frac{(\ell+1/2)^{\alpha^{2(N+1)}\frac{2N+1}{2(N+1)}}-1}{\alpha \big(\log(\ell+1/2)\big)^\frac{1}{2(N+1)} (\ell+1/2)^\frac{2N+1}{2(N+1)}} \Bigg| \leq \frac{2}{\big(\log(\ell+1/2)\big)^\frac{1}{2(N+1)}}.
\end{equation}
\noindent Let $0  < \alpha \leq 1/2$.  The Mean Value Theorem implies
\begin{equation}
\bigg| \frac{(\ell+1/2)^{\alpha^{2(N+1)}\frac{2N+1}{2(N+1)}}-1}{\alpha} \bigg| \leq (2N+1)(\ell+1/2)^{\alpha^{2(N+1)}\frac{2N+1}{2(N+1)}} \alpha^{2N+1} \log(\ell+1/2).
\end{equation}
This yields
\begin{eqnarray}
\sup_{\alpha \in [0,1/2]} \bigg| \frac{(\ell+1/2)^{\alpha^{2(N+1)}\frac{2N+1}{2(N+1)}}-1}{\alpha \big(\log(\ell+1/2)\big)^\frac{1}{2(N+1)} (\ell+1/2)^\frac{2N+1}{2(N+1)}} \bigg|\leq M \frac{\big(\log(\ell+1/2)\big)^\frac{2N+1}{2(N+1)}}{(\ell+1/2)^{\frac{2N+1}{2(N+1)}(1-(1/2)^{2(N+1)})}}\nonumber
\end{eqnarray}
for some constant $M$. Combined with inequality (\ref{sam2}), we have equation (\ref{sam}), which proves the proposition.
\end{proof}

\begin{prop}\label{blip6}
\begin{equation}\label{fourthbit}
\lim_{\ell \rightarrow \infty} \Big\| Q_{1,\ell}(t) e^{-\sum_{k=1}^N \frac{1}{k(2k-1)}\frac{t^{2k}}{(\ell+1/2)^{2k-1}}}-e^{ \frac{t^{2(N+1)}}{(\ell+1/2)^{2N+1}(N+1)(2N+1)}}\mathrm{sinc}(\pi t)  \Big\|_{L_\infty [-D_{\ell,N},D_{\ell,N}]}=0.\nonumber
\end{equation}
\end{prop}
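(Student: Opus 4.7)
The plan is to adapt the argument of Proposition \ref{auxprop2} to $d=1$ and to the $L_\infty$ norm, controlling the target through a bracketing-plus-envelope estimate. Define
\begin{equation}
R_\ell(t) := \frac{Q_{1,\ell}(t) e^{-\sum_{k=1}^N \frac{1}{k(2k-1)}\frac{t^{2k}}{(\ell+1/2)^{2k-1}}}}{\mathrm{sinc}(\pi t)},\quad E_\ell(t) := e^{\frac{t^{2(N+1)}}{(N+1)(2N+1)(\ell+1/2)^{2N+1}}},\nonumber
\end{equation}
so the quantity in the proposition equals $|R_\ell(t) - E_\ell(t)|\cdot|\mathrm{sinc}(\pi t)|$. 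Specializing inequality (\ref{needsone}) to $d=1$ and factoring out $E_\ell$ sandwiches $R_\ell(t)$ between $E_\ell(t)\cdot e^{\sum_{k\geq N+2} \frac{t^{2k}}{k(2k-1)(\ell+1/2)^{2k-1}}}$ on the upper side and that same product multiplied by $e^{-\frac{1}{2(\ell+1/2)}\cdot\frac{(t/(\ell+1/2))^2}{1-(t/(\ell+1/2))^2}}$ on the lower side, valid whenever $|t|<\ell+1/2$.

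The second step is to show that $|R_\ell(t) - E_\ell(t)| \leq E_\ell(t)\cdot \rho_\ell$ on $[-D_{\ell,N}, D_{\ell,N}]$ with $\rho_\ell \to 0$ as $\ell \to \infty$, uniformly in $t$. Writing $t = \alpha D_{\ell,N}$ with $\alpha \in [-1,1]$, the $k=N+2$ term of the tail evaluates to $\alpha^{2(N+2)}\cdot O\big((\log(\ell+1/2))^{(N+2)/(N+1)}/(\ell+1/2)^{1/(N+1)}\big)$, and the further terms $k \geq N+3$ form a geometric-type tail controlled by powers of $(D_{\ell,N}/(\ell+1/2))^2 \to 0$. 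Similarly, the exponent $\frac{1}{2(\ell+1/2)}\frac{(t/(\ell+1/2))^2}{1-(t/(\ell+1/2))^2}$ from the lower bracket is $O\big((\log(\ell+1/2))^{1/(N+1)}/(\ell+1/2)^{(N+2)/(N+1)}\big)$, which is smaller still. Exponentiating, both bracketing factors lie within $1+o(1)$ of $1$ uniformly in $t$, which yields the claim with an explicit $\rho_\ell$.

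Combining these, the target $L_\infty$ norm is bounded by $\rho_\ell \cdot \sup_{|t|\leq D_{\ell,N}} E_\ell(t)|\mathrm{sinc}(\pi t)|$, so what remains is to verify the envelope bound $\sup_\ell \sup_{|t|\leq D_{\ell,N}} E_\ell(t)|\mathrm{sinc}(\pi t)| < \infty$. For $|t| \leq 1$, use $|\mathrm{sinc}(\pi t)| \leq 1$ and observe $E_\ell(t) \leq e^{O((\ell+1/2)^{-(2N+1)})}$ is essentially $1$. For $|t| > 1$, use $|\mathrm{sinc}(\pi t)| \leq 1/(\pi|t|)$; writing $t = \alpha D_{\ell,N}$ with $|\alpha| \in [1/D_{\ell,N}, 1]$ gives $E_\ell(\alpha D_{\ell,N}) = (\ell+1/2)^{\alpha^{2(N+1)}(2N+1)/(2(N+1))}$, and the function $\alpha \mapsto \log\big[E_\ell(\alpha D_{\ell,N})/(\pi \alpha D_{\ell,N})\big]$ is convex in $\alpha > 0$, so its supremum on the compact sub-interval is attained at an endpoint. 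At $\alpha = 1$ the growth $(\ell+1/2)^{(2N+1)/(2(N+1))}$ is precisely absorbed by the decay $1/D_{\ell,N}$, leaving a $(\log(\ell+1/2))^{-1/(2(N+1))}$ factor that vanishes; at $\alpha = 1/D_{\ell,N}$ the value is bounded by a fixed constant.

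The main obstacle is precisely this envelope bound: $D_{\ell,N}$ is chosen as the length scale at which the growth of $E_\ell$ is exactly balanced against the decay of $\mathrm{sinc}$, so a crude estimate (using $|\mathrm{sinc}|\leq 1$ throughout the interval) fails, and one must execute the split into the two regimes carefully. This calculation is, in spirit, dual to Proposition \ref{auxprop3}, with $\mathrm{sinc}$ itself playing the role of a general bandlimited $f$. Once the envelope bound is in place, the proposition follows from $|R_\ell(t) - E_\ell(t)||\mathrm{sinc}(\pi t)| \leq \rho_\ell \cdot E_\ell(t)|\mathrm{sinc}(\pi t)|$ upon taking the supremum over $|t| \leq D_{\ell,N}$.
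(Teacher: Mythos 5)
Your proposal is correct and follows essentially the same route as the paper: both adapt the two-sided bracketing of $Q_{1,\ell}(t)\,e^{-\sum_{k=1}^N \cdots}/\mathrm{sinc}(\pi t)$ coming from inequality (\ref{blork3}) (equivalently, the $d=1$ case of (\ref{needsone}) in Proposition \ref{auxprop2}) to the $L_\infty$ setting, and both hinge on the observation that at the scale $t=\alpha D_{\ell,N}$ the factor $(\ell+1/2)^{\alpha^{2(N+1)}\frac{2N+1}{2(N+1)}}$ coming from $e^{t^{2(N+1)}/((N+1)(2N+1)(\ell+1/2)^{2N+1})}$ is exactly cancelled by the $1/(\pi|t|)$ decay of $\mathrm{sinc}(\pi t)$. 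The paper folds the bracketing-error factor $\rho_\ell$ and the envelope bound into a single explicit chain of inequalities, while you separate them into two steps (with a convexity argument to locate the supremum of the envelope), but the ingredients and the conclusion are the same.
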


\begin{proof}[Proof of Proposition \ref{blip6}]
Let $t \in [-C_{\ell,N},C_{\ell,N}]$ where $t=\alpha C_{\ell,N}$, $\alpha \in [-1,1]$.  Proceeding in the same manner as in the proof of Proposition \ref{auxprop2}, we see (for appropriate constants $C$ and $C'$) that
\begin{eqnarray}
& & \bigg| Q_{1,\ell}(t) e^{-\sum_{k=1}^N \frac{1}{k(2k-1)}\frac{t^{2k}}{(\ell+1/2)^{2k-1}}}-e^{ \frac{t^{2(N+1)}}{(\ell+1/2)^{2N+1}(N+1)(2N+1)}}\mathrm{sinc}(\pi t)  \bigg|_{L_\infty([-C_{\ell,N},C_{\ell,N}])}\nonumber\\
& &\quad\leq \frac{C (\ell+1/2)^{\alpha^{2(N+1)}\frac{2N+1}{2(N+1)}}\alpha^2 (\log(\ell+1/2))^\frac{N+2}{N+1}|\sin(\pi t)|}{\alpha (\ell+1/2)^\frac{1}{N+1} (\log(\ell+1/2))^\frac{1}{2(N+1)}(\ell+1/2)^\frac{2N+1}{2(N+1)}}\nonumber\\
& &\quad\leq \frac{C' (\log(\ell+1/2))^\frac{2N+3}{2(N+1)}}{(\ell+1/2)^\frac{1}{N+1}}.\nonumber
\end{eqnarray}
This proves the proposition.
\end{proof}

\begin{proof}[Proof of equation (\ref{sinctheorem2})]
\noindent The previous two propositions prove equation when $d=1$.  The multidimensional case follows inductively.
\end{proof}

\begin{prop}\label{l2bound}
Let $N \geq 0$.  If $(M_{\ell,N})_\ell$ is a sequence of positive numbers such that (\ref{sinctheorem1}) holds when $(C_{\ell,N})_\ell$ is replaced by $(M_{\ell,N})_\ell$, then
\begin{equation}\label{l2boundeq}
\limsup_{\ell \rightarrow \infty} \frac{M_{\ell,N}}{C_{\ell,N}} \leq 1.
\end{equation}
\end{prop}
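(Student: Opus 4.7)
The plan is to argue by contradiction. Suppose $\limsup_{\ell\to\infty} M_{\ell,N}/C_{\ell,N}>1$; then there exist $A>1$ and an infinite subsequence (which I relabel) along which $M_{\ell,N}\ge AC_{\ell,N}$. Fix $\delta\in(0,1)$ with $A(1-\delta)>1$, and set $S_\ell = [(1-\delta)AC_{\ell,N}, AC_{\ell,N}]^d$. For $\ell$ large, $S_\ell\subset[-M_{\ell,N},M_{\ell,N}]^d$, and the reverse triangle inequality gives
\[
\|\mathrm{SINC}(\pi\cdot) - I_{\mathrm{SINC}\pi(\cdot),\ell}\|_{L_2([-M_{\ell,N},M_{\ell,N}]^d)} \;\ge\; \|I_{\mathrm{SINC}\pi(\cdot),\ell}\|_{L_2(S_\ell)} \;-\; \|\mathrm{SINC}(\pi\cdot)\|_{L_2(\mathbb{R}^d)}.
\]
Since $\|\mathrm{SINC}(\pi\cdot)\|_{L_2(\mathbb{R}^d)}$ is a fixed finite constant, it suffices to prove $\|I_{\mathrm{SINC}\pi(\cdot),\ell}\|_{L_2(S_\ell)}\to\infty$ along the subsequence. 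As in the proof of Proposition \ref{sinctheorem}, I work with $\Psi_\ell = Q_{d,\ell}$, so $I_{\mathrm{SINC}\pi(\cdot),\ell}(t) = Q_{d,\ell}(t)\exp(-\sum_{k=1}^N \frac{\|t\|_{2k}^{2k}}{k(2k-1)(\ell+1/2)^{2k-1}})$.

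The key estimate is a pointwise lower bound on $Q_{d,\ell}e^{-\sum}/\mathrm{SINC}(\pi\cdot)$ on the stretched window $S_\ell$. I apply the lower inequality of (\ref{blork3}) coordinatewise at $t=\alpha AC_{\ell,N}$ with $\alpha\in[1-\delta,1]^d$: since $AC_{\ell,N}/(\ell+1/2)\to 0$ for fixed $A$, the Gaussian-type prefactor on the left of (\ref{blork3}) tends to $1$ uniformly in $\alpha$, and since every term of the tail $\sum_{k\ge N+1}\frac{\|t\|_{2k}^{2k}}{k(2k-1)(\ell+1/2)^{2k-1}}$ is nonnegative, keeping only the $k=N+1$ contribution yields, with some $\epsilon_\ell\to 0$,
\[
\frac{Q_{d,\ell}(t)\exp\bigl(-\sum_{k=1}^N\cdots\bigr)}{\mathrm{SINC}(\pi t)} \;\ge\; (1-\epsilon_\ell)(\ell+1/2)^{\frac{(2N+1)A^{2(N+1)}\|\alpha\|_{2(N+1)}^{2(N+1)}}{4(N+1)}}.
\]
Bounding $\|\alpha\|_{2(N+1)}^{2(N+1)}\ge d(1-\delta)^{2(N+1)}$ on $S_\ell$, squaring, and integrating against $|\mathrm{SINC}(\pi t)|^2$ gives
\[
\|I_{\mathrm{SINC}\pi(\cdot),\ell}\|_{L_2(S_\ell)}^2 \;\ge\; (1-\epsilon_\ell)^2\,(\ell+1/2)^{E}\int_{S_\ell}|\mathrm{SINC}(\pi t)|^2\,dt, \quad E := \tfrac{(2N+1)A^{2(N+1)}(1-\delta)^{2(N+1)}d}{2(N+1)}.
\]

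The final step is an asymptotic evaluation of $\int_{S_\ell}|\mathrm{SINC}(\pi t)|^2\,dt$. The integral factorizes, and integration by parts yields the standard asymptotic $\int_a^b \mathrm{sinc}^2(\pi\tau)\,d\tau = \tfrac{1}{2\pi^2}(1/a - 1/b) + O(a^{-2})$ as $a\to\infty$. With $a=(1-\delta)AC_{\ell,N}$ and $b=AC_{\ell,N}$, this produces $\int_{S_\ell}|\mathrm{SINC}(\pi t)|^2\,dt\sim c_{d,\delta}(AC_{\ell,N})^{-d}$. Since $C_{\ell,N}^{2(N+1)} = \tfrac14(2N+1)^2(\ell+1/2)^{2N+1}\log(\ell+1/2)$, the lower bound above becomes asymptotically
\[
\frac{\mathrm{const}\cdot(\ell+1/2)^{E - d(2N+1)/(2(N+1))}}{(\log(\ell+1/2))^{d/(2(N+1))}},
\]
whose exponent $E - \tfrac{d(2N+1)}{2(N+1)} = \tfrac{d(2N+1)}{2(N+1)}\bigl[A^{2(N+1)}(1-\delta)^{2(N+1)} - 1\bigr]$ is strictly positive by the choice of $\delta$. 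The polynomial factor dominates the logarithmic denominator and the bound diverges, yielding the desired contradiction.

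The main obstacle will be in establishing the key lower bound: adapting the estimate underlying Proposition \ref{auxprop2} from the window $[-C_{\ell,N},C_{\ell,N}]^d$ to the enlarged $S_\ell$, and confirming that the correction $\epsilon_\ell$ remains $o(1)$ uniformly in $\alpha\in[1-\delta,1]^d$ despite the scale factor growing by $A$. Once the pointwise lower bound is cleanly in hand, the remainder is a routine asymptotic for the oscillatory $\mathrm{sinc}^2$-integral over a growing window, followed by an exponent comparison.
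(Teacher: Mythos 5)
Your proof is correct, and it reaches the same conclusion by a genuinely different computation. Both arguments begin from the one-dimensional two-sided bound~(\ref{blork3}), factor coordinatewise, and extract the $k=N+1$ tail term to get a pointwise lower bound on $|Q_{d,\ell}(t)e^{-\sum}|/|\mathrm{SINC}(\pi t)|$ that grows like a power of $(\ell+1/2)$ with exponent proportional to $\|\alpha\|_{2(N+1)}^{2(N+1)}$. The divergence, and the role of the uniform vanishing of the Gaussian correction you flagged, is exactly as you predict: since $A C_{\ell,N}/(\ell+1/2)\to 0$ for any fixed $A$, the prefactor $\prod_i \exp\bigl(-\tfrac{1}{2(\ell+1/2)}\tfrac{(t_i/(\ell+1/2))^2}{1-(t_i/(\ell+1/2))^2}\bigr)\to 1$ uniformly on $S_\ell$, so $\epsilon_\ell\to 0$; and positivity of $\mathrm{sinc}(\pi t)/Q_{1,\ell}(t)$ for $|t|<\ell+1/2$ gives $|Q_{1,\ell}e^{-\sum}|\ge(\text{lower bound})\cdot|\mathrm{sinc}(\pi t)|$, so the passage to absolute values is legitimate.

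Where the two proofs diverge: the paper uses the thin shell $[A_\ell(\ell+1/2),(1+\epsilon_\ell)A_\ell(\ell+1/2)]^d$ whose \emph{relative} width $\epsilon_\ell\to 0$, and then needs the more delicate oscillatory estimate of Proposition~\ref{blork6} (with $\omega=1-2c$) to show that $\|Q_{d,\ell}e^{-\sum}\|_{L_2(\text{shell})}$ is bounded below by a positive constant, while $\|\mathrm{SINC}\|_{L_2(\text{shell})}\to 0$; this yields the contradiction, and a limiting argument in $\delta$ delivers the bound $\limsup M_{\ell,N}/C_{\ell,N}\le 1$. You instead take the annulus $S_\ell=[(1-\delta)AC_{\ell,N},AC_{\ell,N}]^d$ of fixed \emph{relative} width, which lets you replace Proposition~\ref{blork6} by the elementary asymptotic $\int_a^b\mathrm{sinc}^2(\pi\tau)\,d\tau=\tfrac{1}{2\pi^2}(a^{-1}-b^{-1})+O(a^{-2})$ and show that $\|I_{\mathrm{SINC}\pi(\cdot),\ell}\|_{L_2(S_\ell)}\to\infty$ outright, so the reverse triangle inequality against the fixed constant $\|\mathrm{SINC}\|_{L_2(\mathbb{R}^d)}$ immediately gives the contradiction. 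Your route bypasses Proposition~\ref{blork6} entirely (which in the paper's version contains a minor sign issue in the final simplification, $(1+\epsilon)^\omega-1<\omega\epsilon$, not $>$, though harmless in the limit $\epsilon\to0$), and your contradiction is cleaner in that the quantity blows up rather than being merely bounded away from zero. The exponent comparison $E-\tfrac{d(2N+1)}{2(N+1)}=\tfrac{d(2N+1)}{2(N+1)}\bigl[(A(1-\delta))^{2(N+1)}-1\bigr]>0$ is correct, and the polynomial power of $\ell$ indeed dominates the logarithmic factor $(\log(\ell+1/2))^{-d/(2(N+1))}$. The argument is complete.
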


\noindent The proof of Proposition \ref{l2bound} requires the following simple estimate:

\begin{prop}\label{blork6}
Let $a>1/2$, $\epsilon>0$, $0 < \omega <1$, then $$\int_a^{(1+\epsilon)a} \frac{\sin^2 \pi x}{x^{1+\omega}}dx >\frac{\epsilon}{2 a^\omega (1+\epsilon)^\omega}-\frac{a}{2(a-1/2)^{1+\omega}}.$$
\end{prop}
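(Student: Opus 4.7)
The plan is to use the double-angle identity $\sin^2 \pi x = \frac{1}{2}(1-\cos 2\pi x)$ to split
$$\int_a^{(1+\epsilon)a}\frac{\sin^2 \pi x}{x^{1+\omega}}\,dx = \frac{1}{2}\int_a^{(1+\epsilon)a}\frac{dx}{x^{1+\omega}} - \frac{1}{2}\int_a^{(1+\epsilon)a}\frac{\cos 2\pi x}{x^{1+\omega}}\,dx,$$
reducing the problem to lower-bounding the ``bulk'' term and controlling the oscillatory correction. For the bulk term I would use either the elementary estimate $\int_a^{(1+\epsilon)a} dx/x^{1+\omega}\geq \epsilon a/((1+\epsilon)a)^{1+\omega}$ or the exact evaluation $(1-(1+\epsilon)^{-\omega})/(\omega a^\omega)$, together with an elementary convexity comparison between $\epsilon$ and $((1+\epsilon)^{\omega}-1)/\omega$, to recover the target expression $\epsilon/(2 a^\omega (1+\epsilon)^\omega)$.

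For the oscillatory correction I would invoke the shift-symmetry $\cos^2 \pi x = \sin^2 \pi (x-1/2)$. Under the substitution $y=x-1/2$, the integral $\int_a^{(1+\epsilon)a} \cos^2\pi x/x^{1+\omega}\,dx$ becomes $\int_{a-1/2}^{(1+\epsilon)a-1/2} \sin^2 \pi y/(y+1/2)^{1+\omega}\,dy$. Bounding $(y+1/2)^{-(1+\omega)}\leq y^{-(1+\omega)}$ and splitting off the short extra sub-interval $[a-1/2,a]$ (whose contribution is at most $1/(2(a-1/2)^{1+\omega})$) yields
$$\int_a^{(1+\epsilon)a}\frac{\cos^2 \pi x}{x^{1+\omega}}\,dx \;<\; \int_a^{(1+\epsilon)a}\frac{\sin^2 \pi y}{y^{1+\omega}}\,dy + \frac{1}{2(a-1/2)^{1+\omega}}.$$
Adding $\int \sin^2\pi x/x^{1+\omega}\,dx$ to both sides and applying $\sin^2+\cos^2=1$ on the left produces
$$\int_a^{(1+\epsilon)a}\frac{\sin^2 \pi x}{x^{1+\omega}}\,dx \;>\; \frac{1}{2}\int_a^{(1+\epsilon)a}\frac{dx}{x^{1+\omega}} - \frac{1}{4(a-1/2)^{1+\omega}}.$$
Combining this with the bulk estimate above gives the structure of the claimed inequality.

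The main technical obstacle will be matching the precise $(1+\epsilon)^\omega$ denominator in the positive term, since the naive end-point bound on $\int dx/x^{1+\omega}$ only produces an extra factor of $(1+\epsilon)$ in the denominator. I expect the generously-chosen coefficient $a/2$ multiplying $(a-1/2)^{-(1+\omega)}$ in the subtraction (a factor of roughly $2a$ larger than the $1/4$ produced by the shift argument) provides enough slack to absorb both this mismatch and the crude constants accumulated throughout, reducing the conclusion to an elementary scalar inequality in $\omega\in(0,1)$ and $\epsilon>0$ that can be verified by monotonicity.
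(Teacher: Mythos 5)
Your decomposition is the same one the paper uses: shift $\cos^2\pi x = \sin^2\pi(x-\tfrac12)$, compare $(y+\tfrac12)^{-(1+\omega)} \le y^{-(1+\omega)}$, peel off the short interval $[a-\tfrac12,a]$, and then add $\int\sin^2$ to both sides and use $\sin^2+\cos^2=1$. Your intermediate display
\begin{equation}
\int_a^{(1+\epsilon)a}\frac{\sin^2\pi x}{x^{1+\omega}}\,dx \;>\; \frac12\int_a^{(1+\epsilon)a}\frac{dx}{x^{1+\omega}} - \frac{1}{4(a-1/2)^{1+\omega}}\nonumber
\end{equation}
is correct, and (up to the harmless constant $\tfrac14$ vs.\ $\tfrac12$) it is exactly the bound the paper's own proof establishes.

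The step you leave open, however, cannot be closed, and your hope that the coefficient $a/2$ absorbs the mismatch is misplaced. Writing $b=(1+\epsilon)a$, the exact value of the bulk term is $\tfrac{1}{2}\int_a^b x^{-(1+\omega)}dx = \tfrac{1}{2\omega}\bigl(a^{-\omega}-b^{-\omega}\bigr) = \tfrac{(1+\epsilon)^\omega-1}{2\omega\,a^\omega(1+\epsilon)^\omega}$, so comparing with the target $\tfrac{\epsilon}{2a^\omega(1+\epsilon)^\omega}$ requires $\tfrac{(1+\epsilon)^\omega-1}{\omega}\ge\epsilon$. But Bernoulli's inequality (equivalently, convexity of $\omega\mapsto(1+\epsilon)^\omega$) gives the \emph{reverse} strict inequality $(1+\epsilon)^\omega < 1+\omega\epsilon$ for all $\omega\in(0,1)$, $\epsilon>0$; the paper's proof asserts ``$>$'' at this point, which is simply an error. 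The $a/2$ slack you invoke contributes only a fixed amount of order $(a-\tfrac12)^{-\omega}$, whereas the deficit $\epsilon-\tfrac{(1+\epsilon)^\omega-1}{\omega}$ grows linearly in $\epsilon$, so it cannot help for large $\epsilon$. Indeed, the proposition as stated is false: with $a=1$, $\omega=\tfrac12$, $\epsilon=100$, the left side is bounded by $\int_1^{101}x^{-3/2}dx<2$ while the right side is $\tfrac{50}{\sqrt{101}}-\sqrt2\approx 3.56$. What the argument does prove, and what the later application actually needs (it is invoked with $\epsilon=\epsilon_\ell\to0$), is the exact form
\begin{equation}
\int_a^{(1+\epsilon)a}\frac{\sin^2\pi x}{x^{1+\omega}}\,dx \;>\; \frac{1}{2\omega}\Bigl(\frac{1}{a^\omega}-\frac{1}{b^\omega}\Bigr) - \frac{1}{2(a-1/2)^{1+\omega}},\nonumber
\end{equation}
so the right fix is to restate the proposition in this form (or restrict to $\epsilon$ small) rather than to try to force the $\tfrac{\epsilon}{2a^\omega(1+\epsilon)^\omega}$ numerator.
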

\begin{proof}[Proof of Proposition \ref{blork6}]
Let $b=(1+\epsilon)a$.  We have
$$\int_a^b \frac{\sin^2 \pi x}{x^{1+\omega}}dx + \int_a^b \frac{\cos^2 \pi x}{x^{1+\omega}}dx = \frac{1}{\omega}\Big(\frac{1}{a^\omega}-\frac{1}{b^\omega} \Big)$$
and
$$\int_a^b \frac{\cos^2 \pi x}{x^{1+\omega}}dx = \int_{a-1/2}^{b-1/2} \frac{\sin^2 \pi x}{(x+1/2)^{1+\omega}}dx < \int_{a-1/2}^{b-1/2} \frac{\sin^2 \pi x}{x^{1+\omega}}dx.$$  This yields $$2 \int_a^b \frac{\sin^2 \pi x}{x^{1+\omega}}dx - \int_{b-1/2}^b \frac{\sin^2 \pi x}{x^{1+\omega}}dx+ \int_{a-1/2}^a \frac{\sin^2 \pi x}{x^{1+\omega}}dx > \frac{1}{\omega}\Big(\frac{1}{a^\omega}-\frac{1}{b^\omega} \Big),$$ so that $$\int_a^b \frac{\sin^2 \pi x}{x^{1+\omega}}dx >\frac{1}{2\omega}\Big(\frac{1}{a^\omega}-\frac{1}{b^\omega} \Big) -\frac{1}{2(a-1/2)^{1+\omega}}.$$ Noting that $$\frac{1}{2\omega}\Big(\frac{1}{a^\omega}-\frac{1}{b^\omega} \Big) =\frac{\epsilon}{2\omega a^\omega (1+\epsilon)^\omega} \frac{(1+\epsilon)^\omega -1}{\epsilon}>\frac{\epsilon}{2a^\omega (1+\epsilon)^\omega}$$ proves the proposition.
\end{proof}

\begin{proof}[Proof of Proposition \ref{l2bound}]  Fix $N\geq0$, and define $c=\frac{2N+1}{2N+4}+\delta/2$ where $0<\delta$ is small enough so that $c<1/2$.  Define $$A_\ell = (c(N+1)(2N+1)\log(\ell+1/2))^\frac{1}{2(N+1)}(\ell+1/2)^\frac{-1}{2(N+1)}$$ and $$\epsilon_\ell = (\ell+1/2)^{1-2c} A_\ell.$$  Note that $\lim_{\ell \rightarrow \infty} \epsilon_\ell = 0.$  Let $t \in [A_\ell (\ell+1/2), (1+\epsilon_\ell)A_\ell(\ell+1/2)],$ then $t=\alpha(\ell +1/2)$ for some $\alpha \in [A_\ell,(1+\epsilon_\ell)A_\ell]$. For large $\ell$, note that inequality (\ref{blork3}) implies
\begin{eqnarray}
\frac{1}{2\pi}e^\frac{(\ell+1/2)\alpha^{2(N+1)}}{(N+1)(2N+1)}\frac{|\sin \pi\alpha(\ell+1/2)|}{\alpha (\ell+1/2)} \leq \Big| Q_{1,\ell} (t) e^{-\sum_{k=1}^N \frac{1}{k(2k-1)}\frac{t^{2k}}{(\ell+1/2)^{2k-1}}} \Big|.
\end{eqnarray}
Moving to the multivariate case, if $t \in [A_\ell (\ell+1/2), (1+\epsilon_\ell)A_\ell(\ell+1/2)]^d$, then $t = \alpha (\ell+1/2)$ for some $\alpha \in [A_\ell, (1+\epsilon_\ell)A_\ell]^d$.  This yields
\begin{eqnarray}
\prod_{i=1}^d \frac{1}{2\pi\alpha_i^c} \frac{|\sin \pi\alpha_i(\ell+1/2)|}{(\alpha_i(\ell+1/2))^{1-c}} \leq  \Big| Q_{d,\ell} (t) e^{-\sum_{k=1}^N \frac{1}{k(2k-1)}\frac{\|t\|_{2k}^{2k}}{(\ell+1/2)^{2k-1}}} \Big|.\nonumber
\end{eqnarray}
For sufficiently large $\ell$, we can conclude
\begin{eqnarray}
& &\bigg[\frac{1}{9 \pi^2 A_\ell^{2c}} \int_{A_\ell(\ell+1/2)}^{(1+\epsilon_\ell)A_\ell(\ell+1/2)} \frac{\sin^2 \pi x}{x^{2-2c}}dx\bigg]^d\nonumber\\
& &\quad\leq \int_{[A_\ell(\ell+1/2),(1+\epsilon_\ell)A_\ell(\ell+1/2)]^d}  \Big| Q_{d,\ell} (t) e^{-\sum_{k=1}^N \frac{1}{k(2k-1)}\frac{\|t\|_{2k}^{2k}}{(\ell+1/2)^{2k-1}}} \Big|^2 dt.\nonumber
\end{eqnarray}

\noindent Applying Proposition \ref{blork6} for $a= A_\ell (\ell+1/2)$, $\epsilon = \epsilon_\ell,$ and $\omega = 1-2c$, and using the definition of $\epsilon_\ell$, we obtain
\begin{eqnarray}
& &\bigg[\frac{1}{9\pi^2}\Big[\frac{1}{2(1+\epsilon_\ell)^{1-2c}} - \frac{1}{2 A_\ell^{2c}(A_\ell(\ell+1/2)-1)^{2-2c}} \Big]\bigg]^d\nonumber\\
& &\quad\leq \int_{[A_\ell(\ell+1/2),(1+\epsilon_\ell)A_\ell(\ell+1/2)]^d}  \Big| Q_{d,\ell} (t) e^{-\sum_{k=1}^N \frac{1}{k(2k-1)}\frac{\|t\|_{2k}^{2k}}{(\ell+1/2)^{2k-1}}} \Big|^2 dt.\nonumber
\end{eqnarray}
The first term in the brackets in the previous equation has limit $1/2$, while the second term has limit $0$. We conclude there exists a constant $\beta >0$ such that
\begin{equation}\label{blork7}
\beta \leq \int_{[A_\ell(\ell+1/2),(1+\epsilon_\ell)A_\ell(\ell+1/2)]^d}  \Big| Q_{d,\ell} (t) e^{-\sum_{k=1}^N \frac{1}{k(2k-1)}\frac{\|t\|_{2k}^{2k}}{(\ell+1/2)^{2k-1}}} \Big|^2 dt, \quad \ell >0.
\end{equation}
If $M_{\ell,N} \geq (\ell+1/2)(1+\epsilon_\ell)A_\ell$ for infinitely many $\ell$, there exists a subsequence $(\ell_k)_{k \in \mathbb{N}}$ such that (in particular),
\begin{eqnarray}
\lim_{\ell_k \rightarrow \infty}\! \Big\| \mathrm{SINC}(\pi t) - Q_{d,\ell_k}(t) e^{-\sum_{k=1}^{N} \frac{1}{k(2k-1)}\frac{\| t \|_{2k}^{2k}}{(\ell_k+1/2)^{2k-1}}}\Big\|_{L_2([A_{\ell_k} (\ell_k+1/2)), A_{\ell_k} (\ell_k+1/2)(1+\epsilon_{\ell_k})]^d)}\!\!=\!0.\nonumber
\end{eqnarray}
This contradicts inequality (\ref{blork7}).  This yields that for sufficiently large $\ell$, 
\begin{eqnarray}
 M_{\ell,N} &<& (\ell+1/2)(1+\epsilon_\ell)A_\ell\nonumber\\
& = & (1+\epsilon_\ell)\Big(\Big(\frac{2N+1}{4N+4}+\delta/2 \Big)(N+1)(2N+1)(\ell+1/2)^{2N+1}\log(\ell+1/2)\Big)^\frac{1}{2(N+1)}.\nonumber
\end{eqnarray}
Note that since $\epsilon_\ell \rightarrow 0$, for large $\ell$, the quantity $(1+\epsilon_\ell)\Big(\frac{2N+1}{4N+4}+\delta/2 \Big)^\frac{1}{2(N+1)}$ is less than, (and bounded away from) the quantity $\Big(\frac{2N+1}{4N+4}+\delta \Big)^\frac{1}{2(N+1)}$.  We conclude that for any $\delta>0$, there exists $\ell_{N,\delta}$ such that $$ \sup_{\ell >\ell_{N,\delta}} \frac{M_{\ell,N}}{((N+1)(2N+1)\log(\ell+1/2))^\frac{1}{2(N+1)}(\ell+1/2)^\frac{2N+1}{2(N+1)}} < \Big( \frac{2N+1}{4N+4} +\delta \Big)^\frac{1}{2(N+1)}.$$  Proposition \ref{l2bound} follows.
\end{proof}

\begin{prop}\label{linfbound}
Let $N \geq 0$.  If $(M_{\ell,N})_\ell$ is a sequence of positive numbers such that equation (\ref{sinctheorem2}) holds when $(D_{\ell,N})_\ell$ is replaced by $(M_{\ell,N})_\ell$, then
\begin{equation}
\limsup_{\ell \rightarrow \infty} \frac{M_{\ell,N}}{D_{\ell,N}} \leq 1.
\end{equation}
\end{prop}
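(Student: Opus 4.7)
The strategy mirrors the proof of Proposition \ref{l2bound} with pointwise estimates replacing the $L_2$ integration. I would test failure using $f = \mathrm{SINC}(\pi\cdot)$ together with the integer data nodes $t_n = n$, for which the exponential system is an orthonormal, hence uniformly invertible, Riesz basis. Since $f(n) = \delta_{n,0}$ for $n \in C_{\ell,d}$ and $Q_{d,\ell}$ is a coordinate-degree-$2\ell$ polynomial agreeing with these values on $C_{\ell,d}$, the uniqueness in Lemma \ref{othermain} forces $\Psi_\ell = Q_{d,\ell}$, so that
\begin{equation}
I_{\mathrm{SINC}\pi(\cdot),\ell}(t) = Q_{d,\ell}(t)\, e^{-\sum_{k=1}^{N} \frac{1}{k(2k-1)}\frac{\| t \|_{2k}^{2k}}{(\ell+1/2)^{2k-1}}}.\nonumber
\end{equation}

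Suppose, for contradiction, that $\limsup_{\ell \to \infty} M_{\ell,N}/D_{\ell,N} > 1$, and pass to a subsequence $(\ell_j)$ with $M_{\ell_j,N} \geq (1+\eta) D_{\ell_j,N}$ for some fixed $\eta > 0$. Fix $c = 1 + \eta/2$, and choose a point $t_{\ell_j} \in [-M_{\ell_j,N},M_{\ell_j,N}]^d$ whose every coordinate is a half-integer within distance $1/2$ of $c D_{\ell_j,N}$; then $|\sin \pi t_{\ell_j}(i)| = 1$ and $|t_{\ell_j}(i)| = c D_{\ell_j,N}(1+o(1))$. Because $D_{\ell,N}/(\ell+1/2) \to 0$, such $t_{\ell_j}$ lies deep in the range of validity of inequality (\ref{blork3}); applying it coordinate-wise and taking the product over $i = 1,\ldots,d$ yields
\begin{equation}
|I_{\mathrm{SINC}\pi(\cdot),\ell_j}(t_{\ell_j})| \;\geq\; (1+o(1))\, e^{\frac{\|t_{\ell_j}\|_{2(N+1)}^{2(N+1)}}{(N+1)(2N+1)(\ell_j+1/2)^{2N+1}}}\, |\mathrm{SINC}(\pi t_{\ell_j})|,\nonumber
\end{equation}
where the tail $\sum_{k \geq N+2}$ in (\ref{blork3}) is discarded (it is nonnegative) and the correction factor $e^{-d(t/(\ell+1/2))^2/(2(\ell+1/2)(1-(t/(\ell+1/2))^2))}$ tends to $1$ along our sequence.

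Substituting $\|t_{\ell_j}\|_{2(N+1)}^{2(N+1)} = d(cD_{\ell_j,N})^{2(N+1)}(1+o(1))$ together with $D_{\ell,N}^{2(N+1)} = \tfrac{1}{2}(2N+1)^2(\ell+1/2)^{2N+1}\log(\ell+1/2)$, the exponent reduces to $\tfrac{d\,c^{2(N+1)}(2N+1)}{2(N+1)} \log(\ell_j+1/2)(1+o(1))$, while $|\mathrm{SINC}(\pi t_{\ell_j})|$ is of order $(\log(\ell_j+1/2))^{-d/(2N+2)}(\ell_j+1/2)^{-d(2N+1)/(2N+2)}$. The product therefore grows like $(\log(\ell_j+1/2))^{-d/(2N+2)}(\ell_j+1/2)^{d(2N+1)(c^{2(N+1)}-1)/(2(N+1))}$, which diverges since $c > 1$. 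As $\mathrm{SINC}(\pi t_{\ell_j}) \to 0$ at the same time, the triangle inequality gives
\begin{equation}
\| \mathrm{SINC}(\pi t) - I_{\mathrm{SINC}\pi(\cdot),\ell_j}(t) \|_{L_\infty([-M_{\ell_j,N},M_{\ell_j,N}]^d)} \to \infty,\nonumber
\end{equation}
contradicting (\ref{sinctheorem2}) with $M_{\ell,N}$ in place of $D_{\ell,N}$. The main obstacle is careful bookkeeping: one must verify that the $O(1)$ error from rounding to half-integers, the discarded tail $\sum_{k \geq N+2}$, and the correction factor from (\ref{blork3}) all contribute merely $(1+o(1))$ multiplicative factors, so that the strictly positive exponent $d(2N+1)(c^{2(N+1)}-1)/(2(N+1)) > 0$ survives and genuinely forces blow-up.
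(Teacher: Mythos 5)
Your proposal is correct and follows essentially the same route as the paper's proof: pick a scale a fixed factor $c>1$ beyond $D_{\ell,N}$, verify via inequality (\ref{blork3}) that the exponential factor already dominates there, and derive a contradiction with uniform convergence. The minor differences are stylistic rather than structural. You evaluate at a single half-integer point (so $|\sin\pi t_i|=1$ exactly) and show $|I_{\mathrm{SINC},\ell_j}(t_{\ell_j})|\to\infty$, whereas the paper works on a shrinking window $[A_\ell(\ell+1/2),A_\ell(\ell+1/2)+\epsilon_\ell]$, invokes Proposition \ref{sinprop} to guarantee a point with $|\sin\pi t|\gtrsim\epsilon_\ell$, and concludes only that the sup-norm stays bounded away from zero. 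Your half-integer trick sidesteps Proposition \ref{sinprop} and produces genuine blow-up, which is a clean simplification but buys nothing extra, since boundedness away from zero already suffices for the contradiction. You also make explicit the identification $\Psi_\ell=Q_{d,\ell}$ (valid for $t_n=n$), which the paper leaves implicit; this is a worthwhile clarification. Your asymptotic bookkeeping (discarded nonnegative tail, vanishing correction factor since $D_{\ell,N}^2/(\ell+1/2)^3\to 0$, and the resulting positive power $d(2N+1)(c^{2(N+1)}-1)/(2(N+1))$ beating the logarithmic factor) all checks out.
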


\noindent The proof of Proposition \ref{linfbound} requires the following fact:

\begin{prop}\label{sinprop}
Let $0 < \epsilon \leq 1$.  If $A>0$, there exists $t \in [A, A+\epsilon]$ such that $|\sin(\pi t)| \geq |\sin(\pi \epsilon/2)|$.
\end{prop}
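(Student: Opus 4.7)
The plan is to reduce the problem to a statement about the distance from a point to the nearest integer. Recall that for any real $t$, one has $|\sin(\pi t)| = \sin(\pi\, d(t, \mathbb{Z}))$, where $d(t,\mathbb{Z}) := \min_{n \in \mathbb{Z}} |t - n| \in [0, 1/2]$. Since $\sin$ is increasing on $[0, \pi/2]$ and $\epsilon/2 \leq 1/2$, the claim $|\sin(\pi t^*)| \geq |\sin(\pi \epsilon/2)|$ for some $t^* \in [A, A+\epsilon]$ is equivalent to showing
\begin{equation}
\max_{t \in [A, A+\epsilon]} d(t, \mathbb{Z}) \geq \frac{\epsilon}{2}. \nonumber
\end{equation}

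To prove this, I would split into two cases depending on whether the interval $[A, A+\epsilon]$ contains a half-integer. If some half-integer $n + 1/2$ lies in $[A, A+\epsilon]$, then picking $t^* = n+1/2$ gives $d(t^*, \mathbb{Z}) = 1/2 \geq \epsilon/2$, using the hypothesis $\epsilon \leq 1$. Otherwise, the interval avoids all half-integers, so it is entirely contained in some interval $[k-1/2, k+1/2]$ with $k \in \mathbb{Z}$, and on this interval $d(t, \mathbb{Z})$ agrees with the single affine function $|t - k|$. Its maximum over $[A, A+\epsilon]$ is therefore attained at an endpoint.

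To show that at least one of $|A - k|$, $|A+\epsilon - k|$ is $\geq \epsilon/2$, I would consider two subcases: if $k \in [A, A+\epsilon]$, then $|A - k| + |A+\epsilon - k| = \epsilon$, so the larger is at least $\epsilon/2$; if $k$ lies outside the interval, then the endpoint farther from $k$ is at distance at least $\epsilon$ from $k$ (since translating from the nearer endpoint by $\epsilon$ moves away from $k$), which again exceeds $\epsilon/2$. Choosing $t^*$ to be the corresponding endpoint completes the argument.

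There is no substantive obstacle: the proposition is essentially a pigeonhole statement about the position of $[A, A+\epsilon]$ relative to the integer lattice. The only mild care needed is the case split ensuring that one works with the genuine nearest integer to $t^*$ (so that $|t^* - k|$ really equals $d(t^*, \mathbb{Z})$), which is handled automatically once the interval is confined to some $[k-1/2, k+1/2]$.
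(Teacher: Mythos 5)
Your argument is correct, and it simply makes rigorous the "geometric considerations" that the paper invokes without elaboration: the reduction to $\max_{t\in[A,A+\epsilon]} d(t,\mathbb{Z}) \geq \epsilon/2$ via the identity $|\sin(\pi t)| = \sin(\pi\, d(t,\mathbb{Z}))$, followed by the pigeonhole case split on whether the interval contains a half-integer and on the position of the nearest integer, is exactly the picture one draws to see why the claim is true. This is the same approach, just written out in full.
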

\begin{proof}[Proof of Proposition \ref{sinprop}]
The proof is clear from geometric considerations.
\end{proof}

\begin{proof}[Proof of Proposition \ref{linfbound}]
Let $N \geq 0$. Choose $\delta>0$ such that $c := \frac{2N+1}{2N+2}+\delta/2<1$. Define $$A_\ell = (c(N+1)(2N+1)\log(\ell+1/2))^\frac{1}{2(N+1)}(\ell+1/2)^\frac{-1}{2(N+1)}$$ and $$\epsilon_\ell = A_\ell (\ell+1/2)^{1-c}.$$ Note that $\lim_{\ell \rightarrow \infty} \epsilon_\ell = 0$.  Let $t \in [A_\ell (\ell+1/2),A_\ell(\ell+1/2)+\epsilon_\ell].$ Proceeding as before, for sufficiently large $\ell$, we have
\begin{equation}
\frac{1}{2\pi}e^{\Big(\frac{t^{2(N+1)}}{(\ell+1/2)^{2N+1}(N+1)(2N+1))}\Big)} \frac{|\sin(\pi t)|}{t} \leq \Big| Q_{1,\ell} (t) e^{-\sum_{k=1}^N \frac{1}{k(2k-1)}\frac{t^{2k}}{(\ell+1/2)^{2k-1}}}\Big|.\nonumber
\end{equation}
Now for all $t \in [A_\ell (\ell+1/2),A_\ell(\ell+1/2)+\epsilon_\ell]$,\\
\begin{equation}
\frac{1}{2\pi}\frac{(\ell+1/2)^c}{A_\ell(\ell+1/2)+\epsilon_\ell}|\sin(\pi t)| \leq \Big| Q_{1,\ell} (t) e^{-\sum_{k=1}^N \frac{1}{k(2k-1)}\frac{t^{2k}}{(\ell+1/2)^{2k-1}}}\Big|.\nonumber
\end{equation}
In the multivariate case, if $t \in [A_\ell (\ell+1/2),A_\ell(\ell+1/2)+\epsilon_\ell]^d$, we obtain
\begin{equation}
 \frac{1}{(2\pi)^d}\frac{(\ell+1/2)^{cd}}{(A_\ell(\ell+1/2)+\epsilon_\ell)^d}\prod_{i=1}^d |\sin(\pi t_i)| \leq \Big| Q_{d,\ell} (t) e^{-\sum_{k=1}^N \frac{1}{k(2k-1)}\frac{\|t\|_{2k}^{2k}}{(\ell+1/2)^{2k-1}}}\Big|.\nonumber
\end{equation}
For large $\ell$, an application of Proposition \ref{sinprop} yields
\begin{equation}
\frac{1}{(3\pi)^d}\frac{|\sin(\pi \epsilon_\ell/2)|^d}{A_\ell^d(\ell+1/2)^{(1-c)d}} \leq \Big\| Q_{d,\ell} (t) e^{-\sum_{k=1}^N \frac{1}{k(2k-1)}\frac{\|t\|_{2k}^{2k}}{(\ell+1/2)^{2k-1}}} \Big\|_{L_\infty([A_\ell (\ell+1/2),A_\ell(\ell+1/2)+\epsilon_\ell]^d)}.\nonumber
\end{equation}
By the definition of $\epsilon_\ell$, the right hand side of the above equation tends to a positive constant.  The remainder of the proof is almost identical to that of Proposition \ref{l2bound}.
\end{proof}

\noindent The following is trivially deduced from Propositions \ref{l2bound} and \ref{linfbound}:  Fix $N>0$.  If $(E_{\ell,N})_\ell$ is a sequence of intervals such that either equation (\ref{simplecor1}) or equation (\ref{simplecor2}) holds for all $f \in PW_{[-\pi,\pi]^d}$, then $$\max_{x \in (E_{\ell,N})^d} \| x \|_\infty = o\Big(\max_{x \in (E_{\ell,N+1,A})^d} \| x \|_\infty\Big).$$

\end{document}